\providecommand{\U}[1]{\protect\rule{.1in}{.1in}}
\newcommand{\Keywords}{\par \noindent\textbf{Keywords:}}
\numberwithin{equation}{section}
\numberwithin{equation}{section}
\newtheorem{thm}{Theorem}[section]
\newtheorem{prop}[thm]{Proposition}
\newtheorem{lem}[thm]{Lemma}
\newtheorem{cor}[thm]{Corollary}
\newtheorem{rmk}[thm]{Remark}
\newtheorem*{lem*}{Lemma}
\newtheorem*{thm*}{Theorem}
\newcommand*{\RMN}[1]{\uppercase\expandafter{\romannumeral#1}}
\def\<{\langle}
\def\>{\rangle}
\def\d{{\rm d}}
\def\L{\mathcal{L}}
\def\div{{\rm div \,}}
\def\curl{{\rm curl \,}}
\def\E{\mathbb{E}}
\def\N{\mathbb{N}}
\def\P{\mathbb{P}}
\def\R{\mathbb{R}}
\def\T{\mathbb{T}}
\DeclareMathOperator{\tr}{tr}
\begin{document}

\title{Strong Uniqueness by Kraichnan Transport Noise for the 2D Boussinesq Equations with Zero Viscosity}
\author{Shuaijie Jiao\footnotemark[1] \quad Dejun Luo\footnotemark[2]}

	\maketitle
	
	\vspace{-20pt}
	\renewcommand{\thefootnote}{\fnsymbol{footnote}}
	\footnotetext[1]{Email: jiaoshuaijie@amss.ac.cn. School of Mathematical Sciences, University of Chinese Academy of Sciences, Beijing 100049, China, and Academy of Mathematics and Systems Science, Chinese Academy of Sciences, Beijing 100190, China}
	
	\footnotetext[2]{Email: luodj@amss.ac.cn. SKLMS, Academy of Mathematics and Systems Science, Chinese Academy of Sciences, Beijing 100190, China, and School of Mathematical Sciences, University of Chinese Academy of Sciences, Beijing 100049, China}

\begin{abstract}
	We investigate the inviscid 2D Boussinesq equations driven by rough transport noise of Kraichnan type with regularity index $\alpha\in (0,1/2)$. For all $1<p<\infty$, we establish the existence and  uniqueness of probabilistic strong solutions for all $L^p$ initial vorticity and $L^2$ initial temperature, under the parameter constraint $0<\alpha< 1-1/(p\wedge 2)$. The key ingredient is the anomalous regularity due to the noise proven by Coghi and Maurelli \cite{CogMau} who dealt with stochastic 2D Euler equations. Combining techniques from analysis and probability, we demonstrate how the additional regularity from noise compensates the singularity due to the nonlinear parts and coupled terms.
\end{abstract}

\Keywords{ Pathwise uniqueness, 2D Boussinesq equations, regularization by noise, Kraichnan noise, turbulence}

\section{Introduction}

This paper is concerned with the vorticity form of stochastic 2D Boussinesq equation perturbed by transport noise of Kraichnan type on $[0,T]\times \R^2$, namely
\begin{equation}\label{Boussinesq}
	\left\{\aligned
	& \partial_t \omega + u\cdot \nabla\omega +\dot{W} \circ \nabla\omega =\partial_1\theta,\\
	& \partial_t\theta + u\cdot \nabla\theta = \Delta \theta, \quad u=K*\omega,
	\endaligned \right.
\end{equation}
where $\omega$ and $\theta$ are respectively the fluid vorticity and density/temperature, $K$ is the Biot-Savart kernel, $\circ$ stands for the Stratonovich multiplication and $W= W(t,x)$ is the famous Kraichnan noise which is white in time, colored and divergence-free in space. More precisely, the Fourier transform of its space covariance matrix $Q(x-y)=Q(x,y):= \E[W(1,x) \otimes W(1,y)]$ is given by
\begin{equation}\label{Q^}
	\widehat{Q}(\xi)=\langle \xi\rangle^{-(2+2\alpha)}\bigg(I_2-\dfrac{\xi\otimes \xi}{\lvert \xi\rvert^2 }\bigg),\quad \xi\in \mathbb{R}^2,
\end{equation}
where $\langle \xi\rangle:=(1+\vert \xi\vert^2)^{1/2},\alpha\in (0,1)$ and $I_2$ is the $2\times 2$ identity matrix. It is known that the noise can be represented as $W(t,x)=\sum_{k}\sigma_k(x)W_t^k$, where $\{\sigma_k\}$ is a family of smooth divergence-free vector fields and $\{W^k\}$ is a sequence of independent Brownian motions, see \cite[Section 2.1]{GalLuo23}.

\subsection{Motivations}

The standard deterministic 2D Boussinesq system for incompressible fluid flows in $\R^2$ reads as
\begin{equation}\label{det-Bouss}
	(B_{\nu,\kappa})~\left\{\aligned
	& \partial_t u + u\cdot \nabla u +\nabla p =\theta {\bf e}_2 +\nu \Delta u,\\
	& \partial_t\theta + u\cdot \nabla\theta = \kappa \Delta\theta, \quad \div u=0,
	\endaligned \right.
\end{equation}
where $u$ denotes the velocity field, $p$ the pressure, $\theta$ the temperature or density of fluid, $\nu$ the kinematic viscosity, $\kappa$ the thermal diffusivity and ${\bf e}_2$ is the unit vector in the vertical direction. The Boussinesq system plays a pivotal role in atmospheric and oceanic sciences; the interested reader can refer to the books \cite{Maj,Ped} for more information.

The solution theory of the viscous 2D Boussinesq system $(B_{\nu,\kappa})$, $\kappa,\nu>0$, shares close similarities with the 2D Navier-Stokes equation, in particular, the global regularity and the uniqueness of Leray-Hopf weak solution are well known, see for instance \cite{CanDi}.
In the inviscid case ($\kappa=\nu=0$), due to the absence of gradient estimates on the temperature field $\theta$, the global regularity of Boussinesq equations $(B_{0,0})$ turns out to be extremely difficult and still remains open, see for instance \cite[Remark 1.3.2]{Wujia}.
For the zero diffusivity case $(B_{\nu,0})$, the global regularity  has been established in \cite{Cha,Hou} for initial data $(u_0,\theta_0)\in  H^m\times H^m, ~m>2$, and was later extended \cite{DanPai08} to unique energy solutions for $L^2$ initial data, see also \cite{BJQW}.
As for the case of $\nu=0,\kappa>0$, the zero viscosity system $(B_{0,\kappa})$  admits  global $H^m$ solutions with initial data $(u_0,\theta_0)\in  H^m\times H^m, ~m>2$ (cf. \cite{Cha}), with subsequent extensions to rougher initial data in \cite{HimKer}.
When it comes to uniqueness of weak solutions of $(B_{0,\kappa})$ with zero viscosity, the results are much less complete. The reason is simple: the inviscid Boussinesq system $(B_{0,\kappa})$ can be regarded as 2D Euler equation coupled with a transport-diffusion equation.
Analogous to the classical Yudovich theory \cite{Yud} for Euler equation, Danchin and Paicu \cite{DanPai09} proved uniqueness of weak solutions to $(B_{0,\kappa})$ for bounded initial vorticity $\omega_0$ and specific regularity conditions on $\theta_0$; see also \cite{PaiZhu} for  related recent results.

The uniqueness of weak solutions to 2D Euler equation
  \begin{equation}\label{2D-Euler}
  \partial_t \omega + u\cdot\nabla \omega= 0
  \end{equation}
with $L^1\cap L^p$ vorticity, $1<p<\infty$, is a long-standing open problem in fluid dynamics.
In recent years, several ``negative'' results appeared.
By adding a carefully designed forcing term to \eqref{2D-Euler}, Vishik \cite{Vis1,Vis2} constructed non-unique solutions with null initial condition.
Bressan et al. \cite{BreMur,BreShe} proposed a (numerically assisted) scheme for demonstrating the existence of non-unique weak solutions to 2D Euler equations.
For $\omega_0\in \dot{H}^{-1}\cap L^1\cap {L^p},\, p>2$, Mengual \cite{Men} proved that there exist infinitely many bounded admissible solutions $u \in C_tL^2$ to the 2D Euler equation without force.
The most recent progress in this direction is the work \cite{BCK}, where Bru\`e et al. developed a new convex integration scheme and constructed, for some $p>1$ and $\omega_0$ belonging to a dense subset of $\dot{H}^{-1}\cap L^p$, infinitely many non-conservative weak solutions $\omega\in C([0,1];\dot{H}^{-1} \cap L^p)$ to the 2D Euler equation on $\T^2$ without external forces.
Although the uniqueness problem has not been completely settled, we tend to believe that the $L^1\cap L^p$ solutions of 2D Euler equation are non-unique; this also holds for Boussinesq system $(B_{0,\kappa})$ since it includes the 2D Euler equation as a special case.

In the last decades, the growing theory of regularization by noise demonstrates that suitable noises can improve the solution theory of many systems.
It was first shown in the seminal work \cite{FGP} that Stratonovich transport noise can improve the well-posedness theory of linear transport equations with H\"older continuous drift vector field.
After that, there are numerous works aimed at extending the results to nonlinear PDEs, cf. \cite{FlaBook, Gess} for surveys of some early results and the introductions of \cite{CogMau, GalLuo23} for more comprehensive references.
Regarding the choice of appropriate noises in fluid dynamics equations, Holm \cite{Holm2015} derived Stratonovich stochastic fluid equations from stochastic variational principles, and the paper \cite{FP22} obtained transport noise from additive noise through multiscale arguments.
Nowadays, it is broadly accepted that the transport noise in Stratonovich form, which models the small-scale fluctuations of fluid motions, is a physically meaningful random perturbation in fluid equations.

Among the vast literature in the area of regularization by noise for fluid dynamics equations, we mention two types of results: one is suppression of blow-up of strong solutions, the other is restoration of uniqueness of weak solutions.
In \cite{FlaLuo21}, Flandoli and Luo have shown that transport noise can suppress the blow-up of analytic strong solutions of 3D Navier-Stokes equation in vorticity form, obtaining global existence with high probability.
This is achieved by extending the It\^o-Stratonovich diffusion limit of transport noise introduced by Galeati \cite{Gal} to nonlinear PDEs, see \cite{FGL21b, FHLN22, Luo23, Agr24a} for some subsequent works.
In the more recent paper \cite{Agr24}, Agresti proved the global existence of strong solutions to the velocity form of stochastic 3D Navier-Stokes equation with transport noise and small hyper-dissipation, by combining the scaling limit argument in \cite{FlaLuo21} and the stochastic maximal regularity theory developed in \cite{NVW} and a series of follow-up articles.

Concerning restoration of uniqueness of weak solutions to fluid equations by suitable transport noise, Coghi and Maurelli \cite{CogMau} have successfully proved that rough Kraichnan noise can restore the uniqueness of the $L^1\cap L^p$ solutions to the vorticity form of 2D Euler equation on $\R^2$, which greatly improves the solution theory in deterministic setting.
The key ingredient of their proof is the emergence of anomalous $L_T^2\dot{H}^{-\alpha}$ regularity in the usual  $C_T\dot{H}^{-1}$ energy estimate, where $\alpha$ is the index in the definition of the Kraichnan noise \eqref{Q^}.
Indeed, if we formally apply It\^o's formula to $\d \|\omega\|_{\dot{H}^{-1}}^2$, terms related to noise can be written as ${\rm tr}[(Q(0)- Q) D^2 G]\ast \omega, \omega\big\>$, where $G$ is the Green function on $\R^2$ and $\tr[\cdot]$ means the trace of matrices.
It was shown in \cite{CogMau} that there exist two positive constants $c_{\alpha},C$ such that
\begin{equation}\label{key-estimate}
	\big\< {\rm tr}[(Q(0)- Q) D^2 G]\ast \omega, \omega\big\> \le -c_{\alpha} \|\omega \|_{H^{-\alpha}}^2 + C \|\omega \|_{\dot H^{-1}}^2,
\end{equation}
where the negative part is of vital importance in proving the pathwise uniqueness since the additional regularity from noise compensates for the singularity of the nonlinear term.
In subsequent papers \cite{JiaoLuo1, BGM1}, this idea was further developed and applied to the modified surface quasi-geostrophic (mSQG) equation.
The anomalous regularity in \eqref{key-estimate} is closely related to the Kraichnan noise which was first introduced in the seminal work \cite{Kraichnan68} to study passive scalars in turbulent flows.
The covariance \eqref{Q^} of rough Gaussian random field mimics the inertial range of turbulence, whose structure has been extensively studied in \cite{JanRai02,JanRai04}, see also \cite[Section 2.3]{CogMau}.
In \cite[Lemma 4.1]{CogMau}, Coghi and Maurelli fully utilized this unique structure to obtain the regularity estimate \eqref{key-estimate}.

The goal of this work is to extend results in \cite{CogMau} to the Boussinesq system.
It seems that the most natural extension is to consider the vorticity form of system $(B_{0,0})$ with Kraichnan noise:
\begin{equation*}
	\left\{\aligned
	& \partial_t \omega + u\cdot \nabla\omega +\dot{W} \circ \nabla\omega =\partial_1\theta,\\
	& \partial_t\theta + u\cdot \nabla\theta + \dot{W}\circ \nabla\theta= 0,
	\endaligned \right.
\end{equation*}
where $W$ is the rough Kraichnan noise defined in \eqref{Q^}. However, as the deterministic system mentioned before, it seems impossible to get the estimate of $\|\nabla\theta\|_{L^p}$ for $p\in (1,\infty)$, in spite of the Kraichnan noise in the $\theta$-equation, thus making the control of $\|\omega\|_{L^p}$ unavailable.
Therefore, we choose to consider the stochastic Boussiensq system $(B_{0,\kappa})$ with $\kappa>0$ (as in \cite{Luo21}), and prove existence and uniqueness of probabilistic strong solutions  of \eqref{Boussinesq} for $L^p$ initial vorticity and suitable initial temperature.
The coupled structure of system \eqref{Boussinesq} causes several difficulties.
The strategy for proving pathwise uniqueness is to estimate the $\dot{H}^{-1}\times \dot{H}^{-1}$ norm of the difference of two solutions with the same initial condition, which is a little surprising since the natural norm for $\theta$ seems to be $L^2$ due to the transport structure.
This may be because the anomalous regularity from Kraichnan noise only appears in Sobolev spaces of negative order, see \cite[Remark 1.2]{GGM}.
While $\|\omega_t \|_{L^p}$ can be controlled by initial vorticity in case of 2D Euler equation, here, however, the $L^\infty_T L^p$ norm of $\omega$ for $p\ne 2$ depends on $\|\omega \|_{C_T \dot H^{-1}}$ in a pathwise manner, forcing us to make additional efforts, see Section 1.3 below for more discussions.

Before finishing this part, let us mention a few examples where noise fails to improve the solution theory of fluid equations.
In \cite{HZZ}, Hofmanov\'a et al. have proved non-uniqueness in law of analytically weak solutions of 3D Navier-Stokes equation driven by additive, linear multiplicative and nonlinear cylindrical noises; in \cite{HLP}, non-uniqueness of probabilistic strong solutions to 3D Euler equations perturbed by smooth Stratonovich transport noise has been proved.
These results also suggest the necessity of choosing rough transport noise to achieve regularization by noise. Finally, in contrast to our results, Yamazaki \cite{Yama} proved non-uniqueness in law for 2D Boussinesq system with fractional viscosity driven by additive and linear multiplicative noise.

\subsection{Main results}

From the explicit formula \eqref{Q^}, we have
\begin{equation*}
	Q(0)=\sum_{k}\sigma_k(x)\otimes\sigma_k(x)=\frac{\pi}{2\alpha}I_2, \quad \forall x\in \R^2.
\end{equation*}
Then some simple computations lead to the It\^o formulation of \eqref{Boussinesq}:
\begin{equation}\label{Boussinesq-Ito}
	\left\{\aligned
	& \partial_t \omega + u\cdot \nabla\omega + \sum_{k}\sigma_k \cdot \nabla\omega \,\dot{W}^k=\partial_1\theta
	+\frac{\pi}{4\alpha}\Delta \omega,\\
	& \partial_t\theta + u\cdot \nabla\theta = \Delta \theta.
	\endaligned \right.
\end{equation}
In the sequel we always work with this formulation. We also remark that for every smooth divergence-free vector field $u$ on $\R^2$ and $\omega=\curl u$, the following identity holds:
\begin{equation*}
	\curl\div (u\otimes u)=u\cdot \nabla \omega.
\end{equation*}
We shall replace the nonlinear part $u\cdot \nabla \omega$ in \eqref{Boussinesq-Ito} by $\curl\div (u\otimes u)$.

\begin{thm}\label{main-thm}
	Given $\big(\Omega, \mathcal F, (\mathcal F_t)_t, \P, (W^k)_k \big)$, where $(\Omega, \mathcal F, (\mathcal F_t)_t, \P)$ is a filtered probability space satisfying the usual conditions and $(W^k)_k$ are a sequence of independent $(\mathcal F_t)_t$-adapted real Brownian motions.
	Let $1<p<\infty$ and $0<\alpha<\frac{1}{2}$ satisfy $0<\alpha<1-\frac{1}{p\wedge2}$. Assume $\omega_0\in \dot{H}^{-1}\cap L^1\cap L^p$ and $\theta_0\in \dot{H}^{-1}\cap L^1\cap L^2$, there exists a pathwise unique pair $(\omega,\theta)$ of $(\mathcal{F}_t)_t$-adapted $\dot{H}^{-1}\times\dot{H}^{-1}$-valued continuous process satisfying the bounds
	\begin{equation}\label{path-bdd}
		\begin{split}
			\P\text{-a.s.}, \quad
			&\sup_{t\in [0,T]} \|\omega_t \|_{L^1\cap L^{p}} \le \|\omega_0 \|_{L^1\cap L^{p}}+\int_{0}^{T}\|\nabla\theta_t\|_{L^1\cap L^{p}} \,\d t,\\
			&\sup_{t\in [0,T]} \|\theta_t \|_{L^{2}}^2+ \int_{0}^{T}\|\nabla\theta_t\|_{L^{2}}^2\,\d t\leq2\|\theta_0\|_{L^2}^2
			,\quad
			\int_{0}^{T}\|\nabla\theta_t\|_{L^1\cap L^{p}} \,\d t<\infty,
		\end{split}
	\end{equation}

    \begin{align}\label{energy-bdd}
    	\E\bigg[\sup_{t\in [0,T]}\|\omega_t \|_{\dot{H}^{-1}}^2 \bigg] + \int_{0}^{T}\E\big[\|\omega_t \|_{\dot{H}^{-\alpha}}^2\big]\, \d s\lesssim_{\alpha,T} \|\omega_0\|_{\dot{H}^{-1}}^2 + \|\theta_0\|_{L^2}^2,
    \end{align}
    and the following equalities hold in distribution sense for every $t\in [0,T]$,
    \begin{equation}\label{weak}
    	\begin{split}
    		\omega_t &= \omega_0
    		-\int_{0}^{t}\curl\div(u_s \otimes u_s)\, \mathrm{d}s
    		-\sum_{k}\int_{0}^{t}\mathrm{div}(\sigma_k\omega_s) \, \d W_s^k
    		+ \int_{0}^{t} \partial_1\theta_s  \, \d s
    		+\frac{\pi}{4\alpha}\int_{0}^{t} \Delta \omega_s \, \d s,\\
    		\theta_t&= \theta_0
    		-\int_{0}^{t}\mathrm{div}(u_s\theta_s)\, \mathrm{d}s
    		+  \int_{0}^{t} \Delta \theta_s \, \d s,
    		\quad u_t=K*\omega_t.
    	\end{split}
    \end{equation}
\end{thm}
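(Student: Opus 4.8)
The plan is to establish pathwise uniqueness first and then obtain probabilistic strong existence by combining a weak (martingale) solution with uniqueness via a Gy\"ongy--Krylov / Yamada--Watanabe argument. For uniqueness, let $(\omega^1,\theta^1)$ and $(\omega^2,\theta^2)$ be two solutions of \eqref{weak} in the stated class sharing the same initial datum, and set $\bar\omega=\omega^1-\omega^2$, $\bar\theta=\theta^1-\theta^2$, $\bar u=K*\bar\omega$. Splitting the quadratic terms as $u^1\otimes u^1-u^2\otimes u^2=\bar u\otimes u^1+u^2\otimes\bar u$ and $u^1\theta^1-u^2\theta^2=\bar u\theta^1+u^2\bar\theta$, the differences solve linear equations driven by the same noise. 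Following the authors' indication, I would track the combined quantity $\|\bar\omega_t\|_{\dot H^{-1}}^2+\|\bar\theta_t\|_{\dot H^{-1}}^2$, the negative-order norm being dictated by the fact that the anomalous regularity only manifests there.

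Applying It\^o's formula to $\|\bar\omega_t\|_{\dot H^{-1}}^2=\langle(-\Delta)^{-1}\bar\omega_t,\bar\omega_t\rangle$, the martingale contribution is $-2\sum_k\int\langle\div(\sigma_k\bar\omega),(-\Delta)^{-1}\bar\omega\rangle\,\d W^k$, while the It\^o correction of the transport noise combines with the $\frac{\pi}{4\alpha}\Delta\bar\omega$ term to yield precisely $\langle\tr[(Q(0)-Q)D^2G]*\bar\omega,\bar\omega\rangle$; by the key estimate \eqref{key-estimate} this is at most $-c_\alpha\|\bar\omega\|_{H^{-\alpha}}^2+C\|\bar\omega\|_{\dot H^{-1}}^2$, furnishing a \emph{gain} of regularity. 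Differentiating $\|\bar\theta_t\|_{\dot H^{-1}}^2$ in time, the genuine diffusion $\Delta\bar\theta$ produces the dissipation $-2\|\bar\theta\|_{L^2}^2$. The coupling term is benign: $\langle\partial_1\bar\theta,(-\Delta)^{-1}\bar\omega\rangle=-\langle\bar\theta,\partial_1(-\Delta)^{-1}\bar\omega\rangle$ is bounded by $\|\bar\theta\|_{L^2}\|\bar\omega\|_{\dot H^{-1}}$, and Young's inequality routes the $\|\bar\theta\|_{L^2}^2$ part into the diffusion and the $\|\bar\omega\|_{\dot H^{-1}}^2$ part into Gronwall.

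The crux lies in the two nonlinear terms. For $\langle\curl\div(\bar u\otimes u^i),(-\Delta)^{-1}\bar\omega\rangle$ I would use the commutator technique of \cite{CogMau}: after cancelling the energy-conserving part (recall $\bar u=\nabla^\perp(-\Delta)^{-1}\bar\omega$), the remainder is estimated by interpolating the derivatives of $(-\Delta)^{-1}\bar\omega$ between $\dot H^{-\alpha}$ and $\dot H^{-1}$ against the weight $\|\omega^i\|_{L^p}$, and Young's inequality then absorbs the $\dot H^{-\alpha}$ factor into the negative term $-c_\alpha\|\bar\omega\|_{H^{-\alpha}}^2$. This absorption closes exactly under $\alpha<1-\frac1{p\wedge2}$, which is where the parameter constraint is forced. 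The transport term $\langle\div(\bar u\theta^1+u^2\bar\theta),(-\Delta)^{-1}\bar\theta\rangle$ is controlled using the $L^2$-dissipation of $\bar\theta$ and bounds on $\theta^1$. The delicate point, flagged in the introduction, is that $\|\omega^i\|_{L^p}$ is not controlled by the data alone but only pathwise through $\|\omega^i\|_{C_T\dot H^{-1}}$ and $\int_0^T\|\nabla\theta^i\|_{L^p}\,\d t$, so all estimates must be carried inside a stopping-time localization. Collecting the bounds gives $\d\big(\|\bar\omega\|_{\dot H^{-1}}^2+\|\bar\theta\|_{\dot H^{-1}}^2\big)\le C_t\big(\|\bar\omega\|_{\dot H^{-1}}^2+\|\bar\theta\|_{\dot H^{-1}}^2\big)\,\d t+\d M_t$ with an a.s.-integrable random coefficient $C_t$, and the stochastic Gronwall lemma yields $\bar\omega\equiv\bar\theta\equiv0$. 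I expect this absorption---balancing the noise-induced $\dot H^{-\alpha}$ gain against the nonlinear singularity under the sharp $\alpha$--$p$ relation---to be the principal obstacle.

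For existence I would regularize \eqref{Boussinesq-Ito} by mollifying the initial data and smoothing/truncating the noise (and, if convenient, adding a vanishing viscosity to the $\omega$-equation), and derive the bounds \eqref{path-bdd}--\eqref{energy-bdd} uniformly in the parameters: the $L^1\cap L^p$ estimate for $\omega$ from the divergence-free transport and noise structure together with the $\partial_1\theta$ source, the $L^2$ balance for $\theta$ giving $\nabla\theta\in L^2_{t,x}$, and the $\dot H^{-1}$ estimate with the anomalous $L^2_T\dot H^{-\alpha}$ gain. The required $\int_0^T\|\nabla\theta\|_{L^1\cap L^p}\,\d t<\infty$ follows from heat-semigroup smoothing, e.g. $\|\nabla e^{t\Delta}\theta_0\|_{L^p}\lesssim t^{-(1-1/p)}\|\theta_0\|_{L^2}$ together with the $L^1$ bound from $\theta_0\in L^1$, applied to the mild formulation of the $\theta$-equation. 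Tightness of the laws and a Skorokhod representation then deliver a martingale solution, and the pathwise uniqueness established above upgrades it to the unique probabilistic strong solution.
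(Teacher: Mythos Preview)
Your uniqueness argument is essentially the paper's: both track $\|\bar\omega\|_{\dot H^{-1}}^2+\|\bar\theta\|_{\dot H^{-1}}^2$, exploit the identity $\langle\tr[(Q(0)-Q)D^2G]*\bar\omega,\bar\omega\rangle\le -c_\alpha\|\bar\omega\|_{H^{-\alpha}}^2+C\|\bar\omega\|_{\dot H^{-1}}^2$, estimate the nonlinear remainders via the product rule (Lemma~\ref{product}) and interpolation against $\|\omega^i\|_{\dot H^{2\bar\alpha-1}}\le \|\omega^i\|_{\dot H^{-1}}+\|\omega^i\|_{L^{p\wedge2}}$, localize by the stopping time $\tau_M=\inf\{t:\|\omega^1_t\|_{\dot H^{-1}}\vee\|\omega^2_t\|_{\dot H^{-1}}>M\}$, and close by Gronwall. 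One technicality you glide over: since the solutions are not smooth, the paper justifies It\^o's formula for $\langle\bar\omega,G*\bar\omega\rangle$ through a smooth kernel $G^\delta$ (or, when $p\ge2$, the stochastic Lions lemma on the triple $V=\dot H^{-1}\cap L^2$, $H=\dot H^{-1}$); you would need something equivalent. Also, the term $\langle\div(\bar u\,\theta^2),(-\Delta)^{-1}\bar\theta\rangle$ produces a factor $\|\bar\omega\|_{\dot H^{-\alpha}}$ that must be absorbed into the \emph{vorticity} gain, so the two estimates have to be added before Young's inequality---this is implicit in your ``combined quantity'' but worth making explicit.

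For existence the paper takes a genuinely different route. Rather than tightness $+$ Skorokhod $+$ Gy\"ongy--Krylov, it proves directly that the regularized solutions $(\omega^\delta,\theta^\delta)$ form a Cauchy sequence in $C_T\dot H^{-1}\times C_T\dot H^{-1}$ in probability (Proposition~\ref{converge-H^-1}), by running a stability estimate almost identical to the uniqueness argument but now with two \emph{different} approximate noises $Q^{\delta_1},Q^{\delta_2}$; the extra error $\tr[(Q^{\delta_2}-Q^{\delta_1})(0)-(Q^{\delta_2}-Q^{\delta_1})]D^2G$ is shown to be $o(1)$. This yields a.s.\ convergence on the \emph{original} space and hence probabilistic strong existence without Yamada--Watanabe, and more importantly gives strong convergence of $\theta^\delta$ in $C_TL^2\cap L^2_T\dot H^1\cap L^1_T(\dot W^{1,1}\cap\dot W^{1,p})$ (Corollary~\ref{cor3}), which is how the pathwise bounds \eqref{path-bdd} are transferred to the limit. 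Your compactness approach is the one used in \cite{CogMau} and should also succeed, though passing the random pathwise bound $\int_0^T\|\nabla\theta_t\|_{L^1\cap L^p}\,\d t<\infty$ through a Skorokhod limit needs some care. One trade-off: the paper's direct Cauchy estimate only closes for $p\ge2$ (see Remark~\ref{tech-error-noise}: the noise-difference error \eqref{tech-bdd} needs $\|\omega_0^\delta\|_{L^2}$ bounded), so for $1<p<2$ they bootstrap by using the already-established $p\ge2$ theory to build a new approximation with fixed rough noise and smooth data; your compactness route would presumably avoid this case split.
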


\begin{rmk}
	\begin{itemize}
		\item[\rm (1)] As is shown by the estimates \textbf{Cases 1}--\textbf{3} at the end of Section \ref{appr}, the solution that we construct satisfies the following pathwise bound: for every $1<p<\infty$,
		\begin{equation}\label{ex-path-bdd}
			\int_{0}^{T}\|\nabla\theta_t\|_{L^1\cap L^{p}} \,\d t
			\leq C\Big(p,T, \|\omega_0 \|_{L^1\cap L^p}, \|\theta_0\|_{L^1\cap L^2},\|\omega\|_{C_T\dot{H}^{-1}}\Big), \quad \P\text{-a.s.},
		\end{equation}
		where the right-hand side means a $\R_+$-valued random variable depending on $\|\omega\|_{C_T\dot{H}^{-1}}$ linearly and initial data.
		\item[\rm (2)] If $\alpha\in (0,1)$ and $(\omega_0,\theta_0)\in \dot{H}^{-1}\times L^2$, then the existence of weak solutions (in probability sense) fulfilling the energy bound \eqref{energy-bdd} can be shown as in \cite[Section 6]{CogMau}.
		
		\item[\rm (3)] As is shown in the proof below, the conditions in Theorem \ref{main-thm} can be slightly relaxed in the $L^2$-setting. In particular, if $(\omega_0,\theta_0)\in (\dot{H}^{-1}\cap L^2)^2$, then there exists a unique probabilistic strong solution $(\omega, \theta)$ of the equations \eqref{Boussinesq-Ito} in the product space
		\begin{equation*}
			L^{\infty}(\Omega\times [0,T],L^2)  \cap L^2\big(\Omega,C([0,T], \dot{H}^{-1})\big) \times L^{\infty}\big(\Omega,L^2 ([0,T],  H^{1})\big)\cap L^2\big(\Omega,C([0,T],  \dot H^{-1}) \big).
		\end{equation*}
	    When $\theta_0=0$, the Boussinesq system reduces to 2D Euler equation, and the above result removes the condition $\omega_0\in L^1$ in \cite[Theorem 2.12]{CogMau} when $p=2$.
	\end{itemize}
\end{rmk}

\begin{rmk}
The condition $\alpha<1/2$ seems indispensable due to the product estimates of Sobolev functions in Lemma \ref{product}. When $1<p<2$, pathwise uniqueness is ensured by the condition $\alpha<1-1/p$, which can be regarded as the sub-critical regime.
The critical regime $\alpha=1-1/p$ is more complicated as the noise-induced regularity balances exactly the singularity in nonlinear terms.
In the recent paper \cite{BGM1}, Bagnara et al. can deal with this endpoint case in mSQG equation by a decomposition strategy, separating the unknown into two components, one of which is critical but arbitrarily small, while the other is sub-critical.
This strategy also works for our setting, but we do not pursue such extension here since it requires lots of extra effort.
\end{rmk}

\begin{rmk}
If we neglect the coupled term $u\cdot\nabla \theta$, then \eqref{Boussinesq-Ito} becomes
	\begin{equation*}
		\left\{\aligned
		& \partial_t \omega + u\cdot \nabla\omega + \sum_{k}\sigma_k \cdot \nabla\omega \,\dot{W}^k=\partial_1\theta
		+\frac{\pi}{4\alpha}\Delta \omega,\\
		& \partial_t\theta  = \Delta \theta.
		\endaligned \right.
	\end{equation*}
Since $\div u=\div \sigma_k=0$, at least formally, we have for all $1\leq p \leq \infty$,
    \begin{equation*}
    	\|\omega\|_{L_T^{\infty} L^p}\leq
    	 \|\omega_0 \|_{ L^{p}}+\int_{0}^{T}\|\nabla\theta_t\|_{L^{p}} \,\d t.
    \end{equation*}
Hence it suffices to estimate the $L_T^1L^p$ norm of $\nabla\theta$ to obtain the $L^p$-control of $\omega$.
From the characterization of Besov spaces by heat semigroup \cite[Theorem 2.34]{BYD}, we have
    \begin{equation*}
    	\int_{0}^{\infty} \|\nabla\theta_t\|_{L^p}\, \d t
    	\asymp \|\theta_0\|_{\dot{B}_{p,1}^{-1}}.
    \end{equation*}
As a consequence, it seems that the best possible initial condition on temperature is $\theta_0\in \dot{B}_{1,1}^{-1}\cap \dot{B}_{p,1}^{-1}$. However, the presence of transport term $u\cdot\nabla \theta$ makes it rather difficult to achieve this goal. Fortunately, the term $u\cdot\nabla \theta$ vanishes in $L^2$-estimate. In Section 2.2, We take full advantage of this property to obtain bounds on $ \|\nabla \theta\|_{L_T^1L^p}$ for $1\leq p<\infty$.
\end{rmk}

\subsection{Main strategy}

Our approach relies heavily on the role of noise.  More specifically, thanks to the rough Kraichnan noise, the solution enjoys an $\dot{H}^{-\alpha}$ anomalous regularity  when doing $\dot{H}^{-1}$ energy estimate for  vorticity, which is crucial to prove uniqueness.
Our results can be regarded as an extension of the work on the Euler equation by Coghi and Maurelli \cite{CogMau} to system of coupled equations.
Based on a refined estimate obtained in \cite[Lemma 3.1]{JiaoLuo}, we can improve the results in \cite[Theorem 2.12]{CogMau} by removing the unnatural constraint $\alpha>\frac{2}{p\wedge 2}-1$.
On the other hand, in the context of the Boussinesq system, the $L^p$ estimates for vorticity depend on the gradient of temperature and are random variables except in the case $p=2$ (see \textbf{Cases 1}--\textbf{3} at the end of Section \ref{appr}), which is entirely different from the situation of Euler equations.
To overcome this difficulty, we employ the stopping time technique in Sections \ref{Converge} and \ref{proof} to shrink the sample space, thereby obtaining the $L^p$ control of vorticity.

The method of constructing smooth approximate solutions is also different from the work \cite{CogMau}. We choose to mollify the initial data and the noise, adding a small viscosity term $\delta\Delta \omega$ in \eqref{Boussinesq-Ito} to build approximate solutions, while Coghi and Maurelli regularized the initial data, the noise and the nonlinear kernel and used the results on Mckean\,-Vlasov equations to construct approximate solutions in \cite[Section 3]{CogMau}. Our approach simplifies the proof to some extent and is more suitable to handle with the Boussinesq system.

Another difference from the previous work \cite{CogMau} is the proof of existence of solutions.
In \cite{CogMau}, the authors proved the (analytic and probabilistic) weak existence by means of classical compactness methods, while our strategy is to directly prove that the approximate solutions converge in probability in the energy space $C_T\dot{H}^{-1}\times C_T\dot{H}^{-1}$, implying the existence of probabilistic strong solutions.
This strategy essentially reveals the stability of the system \eqref{Boussinesq-Ito} with respect to rough Kraichnan noise and continuous dependence on initial conditions.
In contrast to the compactness method, we obtain strong convergence in $C_T\dot{H}^{-1}$ norm which is global in space variable, a fact crucial for obtaining strong convergence in $L_T^1 \dot{W}^{1,p}$ of the approximate sequence $(\theta^{\delta})_{\delta>0}$ for $1<p<\infty$ in Corollary \ref{cor3}.
However, the error terms related to different (smooth) noises cannot be fully controlled, thus we can only prove that the approximate solutions form a Cauchy sequence in $C_T \dot{H}^{-1}$ when $p\geq2$, see Remark \ref{tech-error-noise}.
As for the case $p\in (1,2)$, we reconstruct the approximate solutions by exploiting the relevant well-posedness results in the case $p\geq2$. The new regularized systems correspond to smooth initial data and the same fixed rough Kraichnan noise,
which is a key point since the above-mentioned error term due to different noises no longer exists in energy estimate.

\subsection{Structure of paper and notations}
In Section \ref{appr}, we review some fundamental properties of the Kraichnan noise, give the definition of the approximate solutions $(\omega^\delta,\theta^{\delta})_{\delta>0}$ and derive some uniform energy estimates on them. In Section \ref{Converge}, we prove that $(\omega^{\delta})$ is a Cauchy sequence in $C([0,T];\dot{H}^{-1})$, which will lead to stronger convergence of $(\theta^{\delta})$ due to the coupled structure of the equations. Section \ref{proof} contains the proof of Theorem \ref{main-thm}. For completeness and consistency, we provide in the appendices sketched  proofs of well-posedness of viscous Boussinesq system (Lemma \ref{wellpoesd-reg-model}) and a technical result (Corollary \ref{cor3}).

Now we collect some frequently used notations. Let $(\Omega,\mathcal{F},(\mathcal{F}_t)_t,\P)$ be a filtered probability space which satisfies the usual conditions and $\mathbb{E}$ denotes the expectation.
For $x = (x^1,x^2)\in \mathbb{R}^2$, we define $x^{\perp} = (x^2,-x^1)$ as the vector obtained by rotating $x$ clockwise 90 degrees and $\langle x \rangle=(1+\lvert x \rvert^2)^{\frac{1}{2}}$. Write the open (resp. closed) ball of center $x$ and radius $R$ as $B(x,R)$ (resp. $\bar{B}(x,R))$. A smooth radial function $\varphi:\R^2 \to [0,1]$ is called a bump function if it is supported in $B(0,1)$ and equals to $1$ on $B(0,1/2)$.

Throughout this paper, the notation $\langle\cdot,\cdot\rangle$ stands both for the scalar product in a Hilbert space and the pairing between a space and its dual. For a tempered distribution $u\in \mathcal{S}', \hat{u}$ denotes its Fourier transform. For $s\in \mathbb{R}$, let $\dot{H}^s(\mathbb{R}^2)\,(\text{resp. }H^s(\mathbb{R}^2))$ denote the usual homogeneous (resp. inhomogeneous) Sobolev spaces, see for instance \cite[Chapter 1]{BYD}.

The symbol $f\lesssim_{\alpha} g$ for two functions $f$ and $g$ means that there exists a positive constant $C_{\alpha}$ depending on the parameter $\alpha$ such that $f(x)\leq C_\alpha g(x)$ for all $x$. We write $f\asymp g$ if $f\lesssim g$ and $g\lesssim f$.

For $p>1,T>0$ and a Banach space $X$, we write $C_TX$ or  $C([0,T];X)$ (resp. $L_T^pX$ or $L^p([0,T];X)$) as the set of $X$-valued continuous functions (resp. $X$-valued $L^p$ integrable functions). Similar notation will be used for  general vector valued Sobolev spaces.

Throughout this paper we shall  always use the Einstein summation convention for the components of vector fields on $\R^2$.

The following estimates about the product of functions in Sobolev spaces will be used frequently, see \cite[Lemma 7.1]{CogMau} or \cite[Corollary 2.55]{BYD} for a detailed proof.
\begin{lem}\label{product}
	Set $a,b\in (-1,1)$ such that $a+b>0$, then for every $f\in \dot{H}^{a},~g\in \dot{H}^{b}$, the product $fg\in \dot{H}^{a+b-1}$ and it holds
	\begin{equation*}
		\|fg\|_{\dot{H}^{a+b-1}}
		\lesssim_{\alpha} \|f\|_{\dot{H}^{a}}\,
		\|g\|_{\dot{H}^{b}}.
	\end{equation*}
\end{lem}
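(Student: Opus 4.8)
The plan is to prove the estimate via Bony's homogeneous paraproduct decomposition combined with Bernstein's inequalities, which is the route taken in \cite[Corollary 2.55]{BYD}. Writing $\dot\Delta_j$ for the homogeneous Littlewood--Paley blocks and $\dot S_j=\sum_{k\le j-1}\dot\Delta_k$ for the low-frequency truncations, I decompose
\begin{equation*}
fg=\dot T_f g+\dot T_g f+\dot R(f,g),
\end{equation*}
where $\dot T_f g=\sum_j \dot S_{j-1}f\,\dot\Delta_j g$ is the paraproduct and $\dot R(f,g)=\sum_{|j-k|\le 1}\dot\Delta_j f\,\dot\Delta_k g$ is the remainder, and estimate the three pieces separately in $\dot H^{a+b-1}(\R^2)$. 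Since $a,b<1=d/2$ and $a+b>0$, the target index satisfies $a+b-1<1$, so $\dot H^{a+b-1}$ is a genuine space of tempered distributions and the decomposition is meaningful; note also that $a,b>-1$ is automatic from $a<1$, $b<1$ and $a+b>0$.

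For the paraproduct $\dot T_f g$, the block $\dot\Delta_j(\dot S_{j'-1}f\,\dot\Delta_{j'}g)$ is nonzero only when $|j-j'|\le C$, since $\dot S_{j'-1}f$ is spectrally supported in $\{|\xi|\lesssim 2^{j'}\}$. The $L^2\to L^\infty$ Bernstein inequality in dimension two gives $\|\dot\Delta_k f\|_{L^\infty}\lesssim 2^{k(1-a)}\big(2^{ka}\|\dot\Delta_k f\|_{L^2}\big)$, and summing over $k\le j'-1$ — which converges \emph{precisely because} $a<1$ — yields $\|\dot S_{j'-1}f\|_{L^\infty}\lesssim 2^{j'(1-a)}\|f\|_{\dot H^a}$. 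Balancing the powers of $2$ via $j\approx j'$, one finds that $2^{j(a+b-1)}\|\dot\Delta_j(\dot T_f g)\|_{L^2}$ is controlled by $\|f\|_{\dot H^a}$ times a finite shift of the $\ell^2$ sequence $\big(2^{j'b}\|\dot\Delta_{j'}g\|_{L^2}\big)_{j'}$; taking the $\ell^2_j$ norm produces $\|\dot T_f g\|_{\dot H^{a+b-1}}\lesssim_\alpha\|f\|_{\dot H^a}\|g\|_{\dot H^b}$. The symmetric term $\dot T_g f$ is handled identically, now invoking $b<1$.

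The delicate term is the remainder, where high--high interactions feed into every lower output frequency. Grouping as $\sum_k\dot\Delta_k f\,\widetilde{\dot\Delta}_k g$ with $\widetilde{\dot\Delta}_k=\sum_{|k'-k|\le1}\dot\Delta_{k'}$, each summand is spectrally supported in a ball of radius $\lesssim 2^k$, so $\dot\Delta_j\dot R(f,g)$ only receives contributions from $k\ge j-C$. Estimating the product in $L^1$ by Cauchy--Schwarz and returning to $L^2$ with the $L^1\to L^2$ Bernstein inequality (a factor $2^{j}$ in dimension two) gives
\begin{equation*}
2^{j(a+b-1)}\|\dot\Delta_j\dot R(f,g)\|_{L^2}\lesssim\sum_{k\ge j-C}2^{(j-k)(a+b)}\,c_k d_k,
\end{equation*}
with $c_k=2^{ka}\|\dot\Delta_k f\|_{L^2}$ and $d_k=2^{kb}\|\widetilde{\dot\Delta}_k g\|_{L^2}$ both in $\ell^2$. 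Here the hypothesis $a+b>0$ is essential: it makes $\big(2^{m(a+b)}\mathbf{1}_{m\le C}\big)_m$ an $\ell^1$ sequence, so Young's convolution inequality converts the right-hand side into an $\ell^2_j$ bound by $\|c\|_{\ell^2}\|d\|_{\ell^2}=\|f\|_{\dot H^a}\|g\|_{\dot H^b}$.

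The main obstacle is precisely this remainder estimate. Unlike the paraproducts, whose output frequency stays comparable to the incoming high frequency and which therefore need only the one-sided conditions $a<1$ and $b<1$, the remainder allows the output to be arbitrarily low, and its convergence rests entirely on the positivity $a+b>0$. One must additionally track the homogeneous-space subtlety at low frequencies — the requirement $a+b-1<d/2$ guaranteeing that $fg$ is a genuine distribution rather than an element modulo polynomials — but this holds automatically under $a,b<1$, so no further hypothesis is needed.
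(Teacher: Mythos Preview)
Your argument is correct and follows exactly the route of \cite[Corollary~2.55]{BYD}, which is precisely the reference the paper cites in lieu of a proof; the paper itself does not supply an independent argument. The only cosmetic point is the Young step for the remainder: with the kernel in $\ell^1(\Z)\subset\ell^2(\Z)$ and $c_kd_k\in\ell^1$ by Cauchy--Schwarz, Young's inequality $\ell^2*\ell^1\subset\ell^2$ gives the claimed $\ell^2_j$ bound.
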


\section{Approximate solutions}\label{appr}

This section is divided into two parts. First, we collect some fundamental properties about the structure of the noise and construct smooth approximate solutions corresponding to smooth initial data and regularized noise. Then, we establish some uniform estimates on the approximate sequence, including the main energy estimate in Proposition \ref{key-bdd} which gives an extra control of the $\dot{H}^{-\alpha}$ norm of the vorticity and is crucial in the proof of pathwise uniqueness.

\subsection{Construction of approximate solutions}

We start with the structure of the rough Kraichnan noise. Let $0<\alpha<1$, recall that the spatial isotropic covariance matrix $Q: \R^2\rightarrow \R^2\times \R^2$ is characterized by

\begin{equation*}
	\widehat{Q}(\xi)=\langle \xi\rangle^{-(2+2\alpha)}\bigg(I_2-\dfrac{\xi\otimes \xi}{\lvert \xi\rvert^2 }\bigg),\quad \xi\in \mathbb{R}^2.
\end{equation*}
The following results are taken from  \cite[Section 2]{CogMau}.

\begin{lem}
	There exists a sequence of divergence free vector fields $\{\sigma_k\}$ which forms a complete orthonormal basis of $H_{\sigma}^{1+\alpha}(\R^2)$, such that
	
	\begin{equation*}
		Q(x-y)=\sum_{k}\sigma_k(x)\otimes\sigma_k(y), \quad \forall x,y\in \R^2
	\end{equation*}
    and
    \begin{equation*}
    	\sup_{x,y\in \R^2}\sum_{k} \vert \sigma_k(x)\vert \, \vert \sigma_k(y)\vert<\infty,
    \end{equation*}
    where the series converges absolutely, uniformly on compact sets.
\end{lem}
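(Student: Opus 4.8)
The plan is to identify the matrix field $Q$ as the (matrix-valued) reproducing kernel of the Hilbert space $H:=H_{\sigma}^{1+\alpha}(\R^2)$ of divergence-free vector fields, endowed with the standard inner product $\langle u,v\rangle_H=(2\pi)^{-2}\int_{\R^2}\langle\xi\rangle^{2(1+\alpha)}\,\widehat u(\xi)\cdot\overline{\widehat v(\xi)}\,\d\xi$ (with $\widehat u(\xi)=\int_{\R^2}u(x)e^{-ix\cdot\xi}\,\d x$). Once this is in place, the expansion and the uniform bound follow from abstract reproducing kernel Hilbert space theory together with one explicit Fourier computation. The crucial structural fact I would exploit from the start is that $1+\alpha>1=\tfrac{d}{2}$ in dimension $d=2$, so the Sobolev embedding $H^{1+\alpha}(\R^2)\hookrightarrow C^{0,\alpha}(\R^2)$ holds; in particular the point evaluations $v\mapsto v(x)$ are bounded functionals on $H$, which makes $H$ a genuine reproducing kernel Hilbert space and lets me turn $H$-convergence into locally uniform pointwise convergence.

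First I would check that $Q$ reproduces $H$. With the Leray projector $P(\xi):=I_2-\tfrac{\xi\otimes\xi}{|\xi|^2}$ we have $\widehat Q(\xi)=\langle\xi\rangle^{-(2+2\alpha)}P(\xi)$; for fixed $y\in\R^2$ and $e\in\R^2$ I set $g_{y,e}(x):=Q(x-y)e$, so that $\widehat{g_{y,e}}(\xi)=e^{-iy\cdot\xi}\langle\xi\rangle^{-(2+2\alpha)}P(\xi)e$. Since $P(\xi)$ annihilates the $\xi$-direction, $g_{y,e}$ is divergence-free, and the decay $\langle\xi\rangle^{-(2+2\alpha)}$ places it in $H$. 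A direct Plancherel computation, using that $\xi\cdot\widehat v(\xi)=0$ (hence $P(\xi)\widehat v(\xi)=\widehat v(\xi)$) for $v\in H$, then gives the reproducing identity $\langle v,g_{y,e}\rangle_H=v(y)\cdot e$ for all $v\in H$. This is where the explicit form of $\widehat Q$ and the divergence-free constraint genuinely enter, and I expect it to be the main obstacle: the matrix-valued bookkeeping with the projector $P(\xi)$ and the rigorous justification of pointwise evaluation are the delicate points, whereas the remaining steps are comparatively soft.

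Next, since $H$ is a separable Hilbert space, I would fix any complete orthonormal basis $\{\sigma_k\}$ of $H$; by construction each $\sigma_k$ is divergence-free, and by the embedding above it is (H\"older) continuous. Expanding the kernel section in this basis and applying the reproducing identity twice, $g_{y,e}=\sum_k\langle g_{y,e},\sigma_k\rangle_H\,\sigma_k=\sum_k(\sigma_k(y)\cdot e)\,\sigma_k$, with convergence in $H$. Evaluating at $x$ is legitimate because $H\hookrightarrow C^{0,\alpha}$ upgrades $H$-convergence to pointwise convergence, and yields $Q(x-y)e=\sum_k(\sigma_k(y)\cdot e)\,\sigma_k(x)$ for every $e$, that is, $Q(x-y)=\sum_k\sigma_k(x)\otimes\sigma_k(y)$.

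Finally, for the uniform bound I would read off the diagonal. Taking the trace at $x=y$ gives $\sum_k|\sigma_k(x)|^2=\tr\big(\sum_k\sigma_k(x)\otimes\sigma_k(x)\big)=\tr Q(0)=\tfrac{\pi}{\alpha}$, a finite constant independent of $x$ (finiteness is precisely $\int_{\R^2}\langle\xi\rangle^{-(2+2\alpha)}\,\d\xi<\infty$, which holds since $\alpha>0$, and the value follows from $Q(0)=\tfrac{\pi}{2\alpha}I_2$). Cauchy--Schwarz in $k$ then gives $\sum_k|\sigma_k(x)||\sigma_k(y)|\le\big(\sum_k|\sigma_k(x)|^2\big)^{1/2}\big(\sum_k|\sigma_k(y)|^2\big)^{1/2}=\tfrac{\pi}{\alpha}$ uniformly in $x,y$, which is the asserted bound. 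Absolute and locally uniform convergence of the series then follow from a Dini-type argument: the partial sums $x\mapsto\sum_{k\le N}|\sigma_k(x)|^2$ increase to the continuous constant $\tfrac{\pi}{\alpha}$, so the convergence is uniform on compacts, and Cauchy--Schwarz controls the matrix tails by these scalar tails.
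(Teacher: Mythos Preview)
Your argument is correct. The paper does not supply a proof of this lemma at all: it merely states the result and cites \cite[Section 2]{CogMau} (see also \cite[Section 2.1]{GalLuo23}), so there is no in-paper proof to compare against. The reproducing-kernel approach you outline --- identifying $Q$ as the matrix-valued reproducing kernel of $H_\sigma^{1+\alpha}(\R^2)$ via the Sobolev embedding $H^{1+\alpha}(\R^2)\hookrightarrow C^{0,\alpha}$, expanding in an arbitrary orthonormal basis, and reading off the uniform bound from $\sum_k|\sigma_k(x)|^2=\tr Q(0)=\pi/\alpha$ together with Cauchy--Schwarz and Dini --- is precisely the standard mechanism behind such kernel decompositions and is what the cited references carry out.
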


Now we mollify the covariance $Q$ as in \cite[Section 3]{CogMau}. For $0<\delta<1$, suppose $\rho^\delta$ is a smooth function such that $\widehat{\rho^\delta}$ is the indicator function of the ball $B(0,1/\delta)$. Take
\begin{equation*}
	\begin{split}
		\sigma_k^{\delta}=\rho^{\delta} \ast \sigma_k,\quad
		Q^{\delta}=\rho^{\delta}*\rho^{\delta} \ast Q.
	\end{split}
\end{equation*}
Note that
\begin{equation}  \label{Q-delta}
	\widehat{Q^{\delta}}(\xi)
	=\widehat{Q}(\xi)\widehat{\rho^\delta}(\xi)^2
	=\langle \xi\rangle^{-(2+2\alpha)}\bigg(I_2-\dfrac{\xi\otimes \xi}{\lvert \xi\rvert^2 }\bigg)
	\widehat{\rho^\delta}(\xi)^2.
\end{equation}
Therefore, $Q^{\delta}\in C^{\infty}(\mathbb{R}^2)$ satisfies $\Vert Q^{\delta} \Vert_{L^{\infty}}
\leq \big\Vert \widehat{Q^{\delta}} \big\Vert_{L^1}
\leq \big\Vert \widehat{Q} \big\Vert_{L^1}$.
Moreover, it is easy to check  $Q^{\delta}(x-y)=\sum_{k}\sigma_k^{\delta}(x)\otimes \sigma_k^{\delta}(y)$. Also, as $\delta\rightarrow0$,
\begin{equation}\label{c-delta}
	Q^{\delta}(0)=\frac{1}{2}\bigg(\int_{\mathbb{R}^2}
	\langle \xi\rangle^{-(2+2\alpha)}\widehat{\rho^\delta}(\xi)^2\,\mathrm{d}\xi\bigg)I_2=:c_{\delta}I_2\rightarrow\dfrac{\pi}{2\alpha}I_2=Q(0), \quad \Big|c_\delta-\frac{\pi}{2\alpha}\Big|\lesssim_{\alpha}\delta^{2\alpha}.
\end{equation}
For the smooth covariance $Q^{\delta}$, the next lemma shows its derivatives are also summable, see \cite[Proposition 2.6]{GalLuo23} or \cite[Lemma 3.1]{CogMau} for a proof.

\begin{lem}\label{reg-Q}
	For every $m,n\in \N$, it holds
	\begin{equation*}
		D^{m+n}Q^{\delta}(x-y)=(-1)^n\sum_{k}D^m\sigma_k^{\delta}(x)\otimes D^n\sigma_k^{\delta}(y)
	\end{equation*}
    and
	\begin{equation*}
			\sup_{x,y\in \R^2}\sum_{k} \vert D_x^m\sigma_k^{\delta}(x)\vert \, \vert D_y^m\sigma_k^{\delta}(y)\vert<\infty,
	\end{equation*}
    where the series converge absolutely, uniformly on compact sets.
\end{lem}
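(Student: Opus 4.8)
The two assertions are intertwined, since the differentiation formula amounts to interchanging $D^{m+n}$ with the infinite series, an operation licensed exactly by the locally uniform convergence that the summability bound provides. The plan is therefore to prove the summability first and then read off the identity. As a preliminary remark, note from \eqref{Q-delta} that $\widehat{Q^\delta}=\mathbbm 1_{B(0,1/\delta)}\widehat Q$ is bounded with compact support, so $Q^\delta\in C^\infty(\R^2)$ and each derivative $D^jQ^\delta$ is bounded; in particular the left-hand side $D^{m+n}Q^\delta$ is well defined. Likewise $\widehat{\sigma_k^\delta}=\mathbbm 1_{B(0,1/\delta)}\widehat{\sigma_k}$, so every $\sigma_k^\delta$ is band-limited and $D^m\sigma_k^\delta=(D^m\rho^\delta)*\sigma_k$.

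The key step is the uniform summability of $\sum_k|D^m\sigma_k^\delta(x)|^2$. Fixing a tensor component and a point $x$, I would regard $\ell_x^m(f):=D^m(\rho^\delta*f)(x)=\langle f,\,(D^m\rho^\delta)(x-\cdot)\rangle$ as a linear functional on $H^{1+\alpha}_\sigma$, where $\langle\cdot,\cdot\rangle$ is the $H^{1+\alpha}$--$H^{-1-\alpha}$ duality pairing extending $L^2$. Since $\widehat{D^m\rho^\delta}$ is a polynomial times $\mathbbm 1_{B(0,1/\delta)}$, hence bounded with compact support, we have $D^m\rho^\delta\in H^{-1-\alpha}$ and $|\ell_x^m(f)|\le\|D^m\rho^\delta\|_{H^{-1-\alpha}}\|f\|_{H^{1+\alpha}}$; by translation invariance of the $H^{-1-\alpha}$ norm this bound is independent of $x$. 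Because $\{\sigma_k\}$ is an orthonormal basis of $H^{1+\alpha}_\sigma$, Parseval's identity applied to the Riesz representative of $\ell_x^m$ gives $\sum_k|D^m\sigma_k^\delta(x)|^2=\sum_k|\ell_x^m(\sigma_k)|^2=\|\ell_x^m\|^2\le\|D^m\rho^\delta\|_{H^{-1-\alpha}}^2$, uniformly in $x$ (restricting the functional to the divergence-free subspace only lowers its norm, and the finitely many tensor components are summed into the norm). A Cauchy--Schwarz step then yields $\sup_{x,y}\sum_k|D^m\sigma_k^\delta(x)|\,|D^n\sigma_k^\delta(y)|\le\|D^m\rho^\delta\|_{H^{-1-\alpha}}\|D^n\rho^\delta\|_{H^{-1-\alpha}}<\infty$, which is the second assertion and simultaneously shows that the series of $x,y$-derivatives converges absolutely and uniformly on compact sets.

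With this convergence secured, the identity follows by differentiating the representation $Q^\delta(x-y)=\sum_k\sigma_k^\delta(x)\otimes\sigma_k^\delta(y)$ term by term, $m$ times in $x$ and $n$ times in $y$. On the right this produces $\sum_k D^m\sigma_k^\delta(x)\otimes D^n\sigma_k^\delta(y)$, while on the left the chain rule for the composite $x\mapsto Q^\delta(x-y)$ produces $(-1)^n(D^{m+n}Q^\delta)(x-y)$; multiplying through by $(-1)^n$ gives the stated formula. The only genuine obstacle is the justification of differentiating under the infinite sum, and this is exactly what the uniform bound of the previous paragraph delivers, via the standard fact that a series may be differentiated termwise once the differentiated series converges locally uniformly. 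Everything else is routine bookkeeping with Fourier supports and tensor indices.
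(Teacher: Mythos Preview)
Your argument is correct. The Parseval step in $H^{1+\alpha}_\sigma$ is the right mechanism: once you recognise $f\mapsto D^m(\rho^\delta*f)(x)$ as a bounded linear functional with norm dominated by $\|D^m\rho^\delta\|_{H^{-1-\alpha}}$ (uniformly in $x$, by translation invariance), the summability of $\sum_k|D^m\sigma_k^\delta(x)|^2$ drops out, and Cauchy--Schwarz plus termwise differentiation finish the job. The sign $(-1)^n$ from the $y$-derivatives of $Q^\delta(x-y)$ is accounted for correctly.

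There is essentially nothing to compare against in the paper itself: the lemma is stated without proof and the reader is referred to \cite[Proposition~2.6]{GalLuo23} and \cite[Lemma~3.1]{CogMau}. Your Hilbert-space/Riesz-representation route is the standard one for such kernel expansions and is in the same spirit as those references; it is self-contained and slightly more explicit than a bare citation, which is a plus.
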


Now we turn to regularize the initial data. Given $\omega_0\in \dot{H}^{-1}\cap L^1\cap L^p$ for some $1< p<\infty$, let $u_0=K*\omega_0$, where $K$ is the Biot-Savart kernel. Take two bump functions $\bar{\rho}\text{ and } \chi$ satisfying $\int \bar{\rho}\,\d x=1$ and for $\delta>0$, we set $\bar{\rho}_\delta=\delta^{-2}\bar{\rho}(\delta^{-1\cdot}),\chi_\delta=\chi(\delta\cdot)$. Define
\begin{equation*}
	\begin{split}
		u_0^{\delta}&=(u_0*\bar{\rho}_\delta)\chi_\delta,\\
		\omega_0^{\delta}&=\curl(u_0^{\delta})=(\omega_0*\bar{\rho}_\delta)\chi_\delta-(u_0*\bar{\rho}_\delta)\cdot \nabla^\perp \chi_\delta,
	\end{split}
\end{equation*}
from \cite[Section 3.2]{CogMau}, it is obvious that the approximate sequence $(\omega_0^\delta)_{\delta}\subset C_c^{\infty}\cap \dot{H}^{-1}$ satisfies
\begin{equation}\label{converge-initial-1}
	\omega_0^{\delta}\rightarrow\omega_0 \quad\text{ in } \dot{H}^{-1}\cap L^1\cap L^p \text{ as } \delta\to 0.
\end{equation}
Similarly, given $\theta_0\in \dot{H}^{-1}\cap L^1\cap L^2$,  the approximate sequence defined by
\begin{equation*}
	\theta_0^\delta=\curl \big((\theta_0*K*\bar{\rho}_\delta)\chi_\delta\big)
\end{equation*}
satisfies
\begin{equation}\label{converge-initial-2}
	\theta_0^\delta\in C_c^{\infty}\cap \dot{H}^{-1}\quad \text{ and }\quad
	\theta_0^{\delta}\rightarrow\theta_0 \quad\text{ in } \dot{H}^{-1}\cap L^1\cap L^2\text{ as } \delta\rightarrow0.
\end{equation}

Throughout the subsequent analysis, we always assume that $(\omega_0^{\delta})_{\delta>0}$ satisfy in addition
\begin{equation}\label{relabel}
	\delta^{\alpha} \|\omega_0^{\delta}\|_{L^2}^2  \rightarrow 0,\quad \text{ as } \delta\rightarrow 0.
\end{equation}
This can always be achieved by relabeling the parameter. Note that if $\omega_0\in L^2$, then $(\omega_0^{\delta})_{\delta>0}$ is uniformly bounded in $L^2$ and \eqref{relabel} holds for free.

Our task now is to construct approximate solutions for \eqref{Boussinesq}. For any $\delta>0$, consider the following viscous Boussinesq system with smooth stochastic perturbation and initial data:
\begin{equation}\label{regular-model}
	\left\{\aligned
	& \partial_t \omega^{\delta} + u^{\delta}\cdot \nabla\omega^{\delta} + \sum_{k}\sigma_k^\delta \cdot \nabla\omega^{\delta} \,\dot{W}^k =\partial_1\theta^{\delta} + \Big(\delta+\frac{c_\delta}{2}\Big)\Delta \omega^{\delta},\\
	& \partial_t\theta^{\delta} + u^{\delta}\cdot \nabla\theta^{\delta} = \Delta \theta^{\delta}, \\
	&\omega^{\delta}(0,\cdot) = \omega_0^{\delta}, ~\theta^{\delta}(0,\cdot) = \theta_0^{\delta},
	\endaligned \right.
\end{equation}
where $u^{\delta}=K*\omega^{\delta}$ and $c_\delta$ is the constant in \eqref{c-delta}. Since this model is a stochastic 2D Navier-Stokes equation coupled with a transport-diffusion equation, the well-posedness is as expected. We leave the proof of the following result in Appendix A.

\begin{lem}\label{wellpoesd-reg-model}
	For every filtered probability space $(\Omega,\mathcal{F},(\mathcal{F}_t)_t,\P)$ and every sequence $(W^k)_k$ of independent real Brownian motions, there exists a unique global smooth solution to \eqref{regular-model}, that is, two $(\mathcal{F}_t)_t$-adapted $\dot{H}^{-1}\cap \dot{H}^{m}$-valued continuous processes $\omega^\delta$ and $\theta^\delta$ such that $\P\text{-a.s.}$, for every $t\in [0,T]$ and $x\in \R^2$,
	\begin{align*}
		\omega^\delta_t &= \omega_0^\delta
		-\int_{0}^{t}\mathrm{div}(u_s^\delta\omega_s^\delta)\, \mathrm{d}s
		-\sum_{k}\int_{0}^{t}\mathrm{div}(\sigma_k^\delta\omega_s^\delta) \, \d W_s^k
		+ \int_{0}^{t} \partial_1\theta_s^\delta  \, \d s
		+\Big(\delta+\frac{c_\delta}{2}\Big)\int_{0}^{t} \Delta \omega_s^{\delta} \, \d s,\\
		\theta_t^{\delta}&= \theta_0^{\delta}
		-\int_{0}^{t}\mathrm{div}(u_s^\delta\theta_s^\delta)\, \mathrm{d}s
		+  \int_{0}^{t} \Delta \theta_s^{\delta} \, \d s,
	\end{align*}
    where $u^{\delta}=K*\omega^{\delta}$ and $m\in \N, T\in \R_+$ are arbitrarily given finite numbers. We also have the following energy estimates: $\P$-a.s. $\forall t\in [0,T]$,
    \begin{equation*}
    	\begin{split}
    		&\|\theta_t^{\delta}\|_{L^2}^2 + 2\int_{0}^{t} \|\nabla\theta_s^{\delta}\|_{L^2}^2 \,\d s =\|\theta_0^{\delta}\|_{L^2}^2,\quad \|\omega_t^{\delta}\|_{L^2}\leq \|\omega_0^{\delta}\|_{L^2} +\sqrt{t}\|\theta_0^{\delta}\|_{L^2},
    	\end{split}
    \end{equation*}
and moreover,
    \begin{equation*}
    	\begin{split}
    		&\E\Big[ \|\omega^{\delta}\|_{C_T(\dot{H}^{-1}\cap \dot{H}^{m} )}^2 \Big]
    		+ \E\Big[ \|\theta^{\delta}\|_{C_T(\dot{H}^{-1}\cap \dot{H}^{m} )}^2 \Big]
    		\leq C\big(T,m,\delta, \|\omega_0^{\delta}\|_{\dot{H}^{-1}\cap \dot{H}^{m}},\|\theta_0^{\delta}\|_{\dot{H}^{-1}\cap \dot{H}^{m}}\big).
    	\end{split}
    \end{equation*}
\end{lem}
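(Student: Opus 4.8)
The plan is to treat \eqref{regular-model} as a coupled semilinear parabolic system: both unknowns are genuinely dissipated (by $\nu_\delta\Delta\omega$ with $\nu_\delta:=\delta+c_\delta/2>0$ and by $\Delta\theta$), all coefficients $\sigma_k^\delta$ and the kernel $K$ are smooth, and the initial data are smooth and compactly supported. I would first build local-in-time solutions, then globalize through a priori estimates, and finally invoke pathwise uniqueness together with the Yamada--Watanabe theorem to upgrade to a probabilistically strong solution. For the construction I would use a spectral Galerkin (or vanishing-mollification) scheme: projecting both equations onto finite-dimensional spaces yields a finite system of SDEs with smooth, locally Lipschitz coefficients, hence a unique local solution, and the uniform a priori bounds below give global existence of the Galerkin solutions and, via Aubin--Lions compactness with the Prokhorov--Skorokhod method, a limit solving \eqref{regular-model} in the stated regularity class. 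Since the $\theta$-equation is linear once $u^\delta=K*\omega^\delta$ is given, an equivalent shorter route is a fixed-point argument: given $\bar\omega$, solve the linear transport--diffusion equation for $\theta$, then the linear stochastic equation for $\omega$, and show the resulting map contracts on a short time interval in a space of adapted processes; the parabolic smoothing from the two Laplacians makes the quadratic nonlinearity $\div(u^\delta\omega^\delta)$ and the forcing $\partial_1\theta^\delta$ locally Lipschitz in high Sobolev norms.

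The heart of the matter is the family of a priori estimates, which also produce the stated energy identities. The $\theta$-equation carries no noise and a divergence-free drift, so testing against $\theta$ and integrating the transport term by parts gives $\frac12\frac{\d}{\d t}\|\theta\|_{L^2}^2=-\|\nabla\theta\|_{L^2}^2$, which is the first displayed identity. For $\omega$ I would apply It\^o's formula to $\|\omega\|_{L^2}^2$: the transport drift $u^\delta\cdot\nabla\omega^\delta$ drops out, the martingale term $-2\sum_k\langle\omega,\sigma_k^\delta\cdot\nabla\omega\rangle\,\d W^k$ vanishes identically because $\langle\omega,\sigma_k^\delta\cdot\nabla\omega\rangle=-\frac12\int(\div\sigma_k^\delta)\,\omega^2=0$, and the It\^o quadratic-variation term equals $\sum_k\|\sigma_k^\delta\cdot\nabla\omega\|_{L^2}^2\,\d t=c_\delta\|\nabla\omega\|_{L^2}^2\,\d t$ since $Q^\delta(0)=c_\delta I_2$. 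This exactly cancels the $\frac{c_\delta}{2}$ part of the corrector, leaving the genuinely pathwise inequality $\frac{\d}{\d t}\|\omega\|_{L^2}^2=2\langle\omega,\partial_1\theta\rangle-2\delta\|\nabla\omega\|_{L^2}^2\le 2\|\omega\|_{L^2}\|\nabla\theta\|_{L^2}$; dividing by $\|\omega\|_{L^2}$, integrating, and using the $\theta$-identity with Cauchy--Schwarz gives $\|\omega_t\|_{L^2}\le\|\omega_0^\delta\|_{L^2}+\sqrt t\,\|\theta_0^\delta\|_{L^2}$. To reach the $\dot H^m$ bounds one tests the equations against $(-\Delta)^m$ and runs the same scheme in $H^m$: the transport noise is again essentially conservative, its quadratic variation pairs with the corrector, and the leftover commutators $[\partial^\gamma,\sigma_k^\delta\cdot\nabla]\omega$ are controlled by the summability of all derivatives of $\sigma_k^\delta$ from Lemma \ref{reg-Q} and absorbed into the dissipation $\nu_\delta\|\nabla\omega\|_{H^m}^2$, while the forcing $\partial_1\theta$ is handled by the parallel $H^{m+1}$ estimate for $\theta$ driven by the now smooth velocity. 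The $\dot H^{-1}$ bounds are the standard $L^2$-energy estimates for the velocity $u^\delta=K*\omega^\delta$. Taking expectations, applying the Burkholder--Davis--Gundy inequality to the surviving martingales to move the supremum inside, and closing with Gr\"onwall produces the final estimate with a constant depending on $T,m,\delta$ and the data.

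Uniqueness I would prove pathwise: for two solutions with the same data the difference solves a linear system, and an $L^2$ (or $\dot H^{-1}$) estimate closes by Gr\"onwall, using that $u^\delta$ is Lipschitz (the solutions being smooth) and that the transport and noise terms are controlled by the divergence-free structure and the uniform bounds on $\sigma_k^\delta$; pathwise uniqueness together with Yamada--Watanabe then yields the unique probabilistically strong solution. I expect the main obstacle to be closing the high-order ($H^m$) a priori estimates in the presence of the first-order transport noise $\sum_k\sigma_k^\delta\cdot\nabla\omega\,\dot W^k$: since this term has the same differential order as the nonlinearity and is not smoothing on its own, one cannot bound it crudely but must exploit the It\^o--Stratonovich cancellation (the quadratic variation meeting the $\frac{c_\delta}{2}\Delta$ corrector) and the summability of $\{D^m\sigma_k^\delta\}_k$, letting the viscosity $\nu_\delta\Delta$ absorb the commutator remainders. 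All remaining steps are routine given the smoothness of the regularized data and coefficients.
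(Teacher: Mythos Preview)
Your proposal is correct and follows essentially the same architecture as the paper's Appendix~A: derive the $L^2$ and $\dot H^{-1}$ a priori bounds, bootstrap to $\dot H^m$ via It\^o's formula, then conclude by classical compactness. The $L^2$ computation, including the exact It\^o--Stratonovich cancellation at the $L^2$ level and the pathwise bound $\|\omega_t\|_{L^2}\le\|\omega_0\|_{L^2}+\sqrt t\|\theta_0\|_{L^2}$, matches the paper's.

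The one genuine technical difference is in the $\dot H^m$ estimate of the noise terms. You propose to let the leading-order quadratic variation cancel the corrector and then \emph{absorb} the cross commutator $\sum_k\langle\sigma_k^\delta\cdot\nabla D^m\omega,\,[D^m,\L_k]\omega\rangle$ into the artificial viscosity $\delta\|D^{m+1}\omega\|_{L^2}^2$; this works and yields constants scaling with $\delta^{-1}$, which is harmless here since the lemma allows $C$ to depend on $\delta$. The paper instead uses a sharper double-commutator identity (after \cite{CFH,LanCri}): with $S_k=[D^m,\L_k]$ and $T_k=[S_k,\L_k]$,
\[
\langle D^m\L_k^2\omega,D^m\omega\rangle+\|D^m\L_k\omega\|_{L^2}^2=\langle T_k\omega,D^m\omega\rangle+\|S_k\omega\|_{L^2}^2,
\]
so the noise contribution is bounded by $\|\omega\|_{H^m}^2$ \emph{without} touching the dissipation; the $\Delta\omega$ is used only to control the nonlinear term $u\cdot\nabla\omega$. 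Your approach is simpler to write down; the paper's buys independence from $\delta$ in the noise estimate and explains why the same argument would go through even without the extra viscosity (cf.\ the remark closing Appendix~A). For the $\dot H^{-1}$ bound you call ``standard'', the paper makes the computation explicit via the identity $\L_k\omega=\curl\big((\sigma_k\cdot\nabla)u+(D\sigma_k)^Tu\big)$, which reduces the noise terms to velocity-level quantities controlled by $\|u\|_{L^2}^2$; this is the content behind your one-line remark.
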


\subsection{Uniform energy estimate}
Now  we turn to deriving uniform energy estimates for the approximate solutions $(\omega^\delta,\theta^\delta)_\delta$. We start with the estimate of $\dot{H}^{-1}$ norm of vorticity, where an additional $H^{-\alpha}$ regularity due to the (rough) Kraichnan noise will emerge.
Let $G$ be the Green function of $-\Delta$ on $\R^2$, namely $-\Delta G=\delta_0$, then $u^{\delta}=K*\omega^{\delta}=\nabla^{\perp}G*\omega^{\delta}$, hence we have
\begin{equation*}
	\|\omega^{\delta}\|_{\dot{H}^{-1}}^2 = \int_{\mathbb{R}^2}|\xi|^{-2}\, \big|\widehat{\omega^{\delta}}(\xi)\big|^2 \,\d \xi = 4\pi^2\langle \omega^{\delta},G*\omega^{\delta} \rangle= 4\pi^2 \|u^{\delta}\|_{L^2}^2.
\end{equation*}

\begin{prop}\label{key-bdd}
	For every $T>0$, as $\delta\to 0$,
	\begin{equation*}
		\begin{split}
			\E\bigg[\sup_{t\in [0,T]} \|\omega_t^{\delta}\|_{\dot{H}^{-1}}^2 \bigg]
			+\int_{0}^{T}\E\big[\|\omega_t^{\delta}\|_{\dot{H}^{-\alpha}}^2 \big]\, \d t
			&\lesssim_{\alpha, T} \|\omega_0\|_{\dot{H}^{-1}}^2 +  \|\theta_0\|_{L^2}^2 + o(1),
		\end{split}
	\end{equation*}
where the constant behind the inequality depends only on $\alpha$ and $T$, independent of $\delta>0$.
\end{prop}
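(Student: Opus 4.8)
The plan is to apply It\^o's formula to $\|\omega_t^\delta\|_{\dot H^{-1}}^2 = 4\pi^2\langle\omega_t^\delta, G*\omega_t^\delta\rangle$ along the smooth solutions furnished by Lemma \ref{wellpoesd-reg-model}, and to sort the resulting terms. Writing $\psi^\delta = G*\omega^\delta$ so that $u^\delta = \nabla^\perp\psi^\delta$, the transport nonlinearity contributes $-2\langle\psi^\delta, u^\delta\cdot\nabla\omega^\delta\rangle = 0$ by the usual cancellation for divergence-free fields. The buoyancy term gives $2\langle\psi^\delta,\partial_1\theta^\delta\rangle = -2\langle\partial_1\psi^\delta,\theta^\delta\rangle$, which Young's inequality bounds by $\|\omega^\delta\|_{\dot H^{-1}}^2 + \|\theta^\delta\|_{L^2}^2$, whose time integral is harmless thanks to the pathwise bound $\|\theta_t^\delta\|_{L^2}\le\|\theta_0^\delta\|_{L^2}$ of Lemma \ref{wellpoesd-reg-model}. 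The decisive contribution is the sum of the It\^o correction of the noise and the viscous drift $\frac{c_\delta}2\Delta\omega^\delta$, which equals $\langle\tr[(Q^\delta(0)-Q^\delta)D^2G]*\omega^\delta,\omega^\delta\rangle$; to this I apply the Coghi--Maurelli estimate \eqref{key-estimate}. Since $Q^\delta$ agrees with $Q$ below frequency $\delta^{-1}$, this produces $-c_\alpha\|\omega^\delta\|_{H^{-\alpha}}^2 + C\|\omega^\delta\|_{\dot H^{-1}}^2$ plus a high-frequency error $\lesssim_\alpha\delta^{2\alpha}\|\omega^\delta\|_{L^2}^2$, while the extra dissipation $\delta\Delta\omega^\delta$ only contributes a favorable $-2\delta\|\omega^\delta\|_{L^2}^2$.

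First I would take expectations, which kills the stochastic integral. Using the pathwise bounds $\|\theta_t^\delta\|_{L^2}\le\|\theta_0^\delta\|_{L^2}$ and $\|\omega_t^\delta\|_{L^2}\le\|\omega_0^\delta\|_{L^2}+\sqrt t\,\|\theta_0^\delta\|_{L^2}$, the error term integrates to $\lesssim_{\alpha,T}\delta^{2\alpha}(\|\omega_0^\delta\|_{L^2}^2+\|\theta_0^\delta\|_{L^2}^2)$, which is $o(1)$ as $\delta\to0$ by the relabeling \eqref{relabel}. A Gr\"onwall argument applied to $t\mapsto\E\|\omega_t^\delta\|_{\dot H^{-1}}^2$ then yields, with constants independent of $\delta$,
\[
\sup_{t\in[0,T]}\E\|\omega_t^\delta\|_{\dot H^{-1}}^2 + \int_0^T\E\|\omega_t^\delta\|_{H^{-\alpha}}^2\,\d t\lesssim_{\alpha,T}\|\omega_0\|_{\dot H^{-1}}^2 + \|\theta_0\|_{L^2}^2 + o(1).
\]
The elementary splitting $\|\omega\|_{\dot H^{-\alpha}}^2\lesssim\|\omega\|_{H^{-\alpha}}^2+\|\omega\|_{\dot H^{-1}}^2$ (valid for $0<\alpha<1$) then delivers the $\int_0^T\E\|\omega_t^\delta\|_{\dot H^{-\alpha}}^2\,\d t$ part of the claim.

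The main difficulty is the supremum bound $\E[\sup_t\|\omega_t^\delta\|_{\dot H^{-1}}^2]$, because a crude Burkholder--Davis--Gundy estimate of the martingale $M_t = -2\sum_k\int_0^t\langle\psi_s^\delta,\div(\sigma_k^\delta\omega_s^\delta)\rangle\,\d W_s^k$ produces a factor $\int_0^T\|\omega_s^\delta\|_{L^2}^2\,\d s$, whose bound diverges as $\delta\to0$ when $\omega_0\notin L^2$. To circumvent this I would exploit the divergence structure of the bracket. From $\langle\psi^\delta,\div(\sigma_k^\delta\omega^\delta)\rangle = -\langle(u^\delta)^\perp\omega^\delta,\sigma_k^\delta\rangle$ and $\widehat{Q^\delta}(\xi)\lesssim\langle\xi\rangle^{-(2+2\alpha)}I_2$, one gets $[M]_t\lesssim\int_0^t\|(u_s^\delta)^\perp\omega_s^\delta\|_{H^{-(1+\alpha)}}^2\,\d s$. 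Now the key algebraic identity $(u)^\perp\omega = \nabla\big(\tfrac12|u|^2\big)-\div(u\otimes u)$ gives $\|(u^\delta)^\perp\omega^\delta\|_{H^{-(1+\alpha)}}\lesssim\|u^\delta\otimes u^\delta\|_{\dot H^{-\alpha}}$ (together with the analogous scalar term $\||u^\delta|^2\|_{\dot H^{-\alpha}}$), and Lemma \ref{product} with $a=b=\tfrac{1-\alpha}2$ combined with interpolation yields
\[
\|u^\delta\otimes u^\delta\|_{\dot H^{-\alpha}}\lesssim_\alpha\|\omega^\delta\|_{\dot H^{-(1+\alpha)/2}}^2\lesssim_\alpha\|\omega^\delta\|_{\dot H^{-1}}\,\|\omega^\delta\|_{\dot H^{-\alpha}}.
\]
Hence $[M]_t\lesssim_\alpha\int_0^t\|\omega_s^\delta\|_{\dot H^{-1}}^2\,\|\omega_s^\delta\|_{\dot H^{-\alpha}}^2\,\d s$.

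With this refined bracket, the Burkholder--Davis--Gundy inequality gives
\[
\E\Big[\sup_{t\le T}|M_t|\Big]\lesssim\E\Big[\sup_{t\le T}\|\omega_t^\delta\|_{\dot H^{-1}}\Big(\int_0^T\|\omega_s^\delta\|_{\dot H^{-\alpha}}^2\,\d s\Big)^{1/2}\Big]\le\tfrac12\,\E\Big[\sup_{t\le T}\|\omega_t^\delta\|_{\dot H^{-1}}^2\Big] + C\int_0^T\E\|\omega_s^\delta\|_{\dot H^{-\alpha}}^2\,\d s,
\]
where the first term on the right is absorbed into the left-hand side and the second is finite by the first step. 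Taking the supremum in time before expectation in the energy identity, inserting this martingale bound together with the buoyancy estimate, and closing with Gr\"onwall then controls $\E[\sup_t\|\omega_t^\delta\|_{\dot H^{-1}}^2]$ by the asserted right-hand side, which completes the proof. I expect the two genuinely delicate points to be the identification of the noise It\^o correction with the trace operator entering \eqref{key-estimate} (which rests on \cite[Lemma 4.1]{CogMau}) and the divergence-form martingale estimate above; the remaining steps are Gr\"onwall and the uniform bounds of Lemma \ref{wellpoesd-reg-model}.
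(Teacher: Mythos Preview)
Your proposal is correct and follows the same architecture as the paper: It\^o's formula for $\langle\omega^\delta,G*\omega^\delta\rangle$, cancellation of the transport nonlinearity, the Coghi--Maurelli trace estimate for the It\^o--Stratonovich correction, Gr\"onwall for the $\sup_t\E$ bound, and then BDG with a refined quadratic-variation estimate to upgrade to $\E\sup_t$.

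The only notable divergence is in how the martingale bracket is handled. You pass through the Lamb-type identity $(u^\delta)^\perp\omega^\delta=\nabla(\tfrac12|u^\delta|^2)-\div(u^\delta\otimes u^\delta)$ and then bound $\|u^\delta\otimes u^\delta\|_{\dot H^{-\alpha}}$ via Lemma~\ref{product} with $a=b=\tfrac{1-\alpha}{2}$ plus interpolation. The paper instead uses the curl identity $\sigma_k^\delta\cdot\nabla\omega^\delta=\curl\big((\sigma_k^\delta\cdot\nabla)u^\delta+(D\sigma_k^\delta)^Tu^\delta\big)$, drops the first piece by $\langle u^\delta,\sigma_k^\delta\cdot\nabla u^\delta\rangle=0$, and reconstructs $D^2Q^\delta$ in the sum over $k$; it then bounds $\|u^\delta\otimes u^\delta\|_{H^{-\alpha}}$ by the embeddings $L^{2/(1+\alpha)}\hookrightarrow H^{-\alpha}$, $\dot H^{1-\alpha}\hookrightarrow L^{2/\alpha}$ and H\"older. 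Both routes land on the same bound $[M]_T\lesssim\int_0^T\|\omega^\delta\|_{\dot H^{-1}}^2\|\omega^\delta\|_{\dot H^{-\alpha}}^2\,\d t$, and the absorption step is identical. One minor correction: the paper's bump-function decomposition gives a noise error of order $\delta^{\alpha}\|\omega^\delta\|_{L^2}^2$ (not $\delta^{2\alpha}$), which is exactly what the relabeling assumption \eqref{relabel} is designed to kill.
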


\begin{rmk}
In the recent work \cite{GGM}, the underlying constant depending on the parameter $\alpha$ was explicitly calculated by means of Mellin transform.
\end{rmk}

\begin{proof}
Since $\omega^{\delta} \in \dot H^{-1} \cap \dot H^m$ is smooth, we directly apply It\^o's formula on $\dot{H}^{-1}$ (cf. \cite[Theorem 4.32]{Daprato}) and get
	\begin{equation}\label{Ito-reg-w-H^-1}
		\begin{split}
			\d \big\langle \omega^{\delta}, G*\omega^{\delta} \big\rangle
			& = -2\big\langle G*\omega^{\delta},u^{\delta}\cdot \nabla \omega^{\delta} \big\rangle \,\d t + 2\big\langle G*\omega^{\delta}, \partial_1\theta^{\delta}\big\rangle \,\d t
			+ 2\delta\big\langle G*\omega^{\delta},\Delta \omega^{\delta} \big\rangle \,\d t \\
			&\quad- 2\sum_{k}\big\langle G*\omega^{\delta}, \sigma_k^{\delta}\cdot \nabla \omega^{\delta}  \big\rangle \,\d W^k  \\
			&\quad+  c_{\delta}\big\langle G*\omega^{\delta}, \Delta\omega^{\delta}\big\rangle \,\d t
			+ \sum_{k}\big\langle G*(\sigma_k^{\delta}\cdot \nabla\omega^{\delta}), \sigma_k^{\delta}\cdot \nabla\omega^{\delta}\big\rangle \,\d t.
		\end{split}
	\end{equation}

	The nonlinear term $\big\langle G*\omega^{\delta},u^{\delta}\cdot \nabla \omega^{\delta} \big\rangle = -\big\langle (\nabla G*\omega^{\delta})\cdot u^{\delta},  \omega^{\delta} \big\rangle=0$ as expected. It is obvious that the viscous term $\delta\big\langle G*\omega^{\delta},\Delta \omega^{\delta} \big\rangle = -\delta \|\omega^{\delta}\|_{L^2}^2\leq 0$ and
	\begin{equation*}
		\big| \big\langle G*\omega^{\delta}, \partial_1\theta^{\delta}\big\rangle \big|\lesssim \|\omega^{\delta}\|_{\dot{H}^{-1}} \|\theta^{\delta}\|_{L^2}\lesssim \|\omega^{\delta}\|_{\dot{H}^{-1}}^2+\|\theta^{\delta}\|_{L^2}^2.
	\end{equation*}

    To take expectation in \eqref{Ito-reg-w-H^-1}, one needs to check the  integrability of the martingale part; indeed, by integration by parts, Lemma \ref{reg-Q} with $m=n=0$ and Young's inequality,
	\begin{align*}
	\sum_{k}\E\Big[\big|\big\langle G*\omega^{\delta}, \sigma_k^{\delta}\cdot \nabla\omega^{\delta} \big\rangle\big|^2 \Big]
	&= \sum_{k}\E\Big[\big|\big\langle \sigma_k^{\delta}\cdot  (\nabla G*\omega^{\delta}), \omega^{\delta} \big\rangle\big|^2 \Big]   \\
	&= \E \sum_{k}\iint  \sigma_k^{\delta}(x)\cdot  \nabla G*\omega^{\delta}(x) \omega^{\delta}(x)\,
	\sigma_k^{\delta}(y)\cdot  \nabla G*\omega^{\delta}(y) \omega^{\delta}(y)\,\d x\d y \\
	&=  \E\Big[\big\langle Q^{\delta}*((\nabla G*\omega^{\delta})\,\omega^{\delta}),  (\nabla G*\omega^{\delta})\,\omega^{\delta} \big\rangle \Big]  \\
	&\leq \|Q^{\delta}\|_{L^{\infty}} \E\Big[ \|(\nabla G*\omega^{\delta})\,\omega^{\delta}\|_{L^1}^2 \Big]   \\
	&\leq \|\widehat{Q}\|_{L^1} \E\Big[ \| \omega^{\delta}\|_{L^2}^2 \|\omega^{\delta}\|_{\dot{H}^{-1}}^2 \Big]< \infty,
	\end{align*}
    where the last step follows from Lemma \ref{wellpoesd-reg-model}, hence the stochastic part vanishes once taking expectation.

     To handle with the last two terms in \eqref{Ito-reg-w-H^-1} which are from noise,  recall $Q^{\delta}(0)=c_{\delta}I_2$, we have
     \begin{equation*}
     	 c_{\delta}\big\langle G*\omega^{\delta}, \Delta\omega^{\delta}\big\rangle
     	 = \big\langle \tr\big[Q^{\delta}(0)D^2G\big]*\omega^{\delta},\omega^{\delta} \big\rangle
     \end{equation*}
     and
     \begin{equation}\label{sim-conpute}
     	\begin{split}
     		\sum_k\big\langle G*(\sigma_k^{\delta}\cdot\nabla\omega^{\delta}), \sigma_k^{\delta}\cdot\nabla\omega^{\delta} \big\rangle
     		&=\sum_{k}\big\langle \nabla\cdot(\sigma_k^{\delta}\omega^{\delta}),\nabla\cdot \big(G*(\sigma_k^{\delta}\omega^{\delta})\big)\big\rangle \\
     		&=-\sum_{k}\big\langle \sigma_k^{\delta}\omega^{\delta},D^2 G*(\sigma_k^{\delta}\omega^{\delta})\big\rangle \\
     		&=-\sum_{i,j}\iint\partial_{ij}^2G(x-y)\sum_{k}\sigma_k^{\delta,i}(x)\sigma_k^{\delta,j}(y) \omega^{\delta}(x)\omega^{\delta}(y)\,\d x\d y \\
     		&=-\big\langle\mathrm{tr}\big[Q^{\delta}D^2G\big]*\omega^{\delta},\omega^{\delta}\big\rangle.
     	\end{split}
     \end{equation}
     So we arrive at
     \begin{equation}\label{sim2}
     	\begin{split}
     		&c_{\delta}\big\langle G*\omega^{\delta}, \Delta\omega^{\delta}\big\rangle
     		+ \sum_{k}\big\langle G*(\sigma_k^{\delta}\cdot \nabla\omega^{\delta}), \sigma_k^{\delta}\cdot \nabla\omega^{\delta}\big\rangle \\
     		&= \big\langle\tr\big[(Q^{\delta}(0)-Q^{\delta})D^2G\big]*\omega^{\delta},\omega^{\delta}\big\rangle \\
     		&:= \big\langle J*\omega^{\delta},\omega^{\delta} \big\rangle.
     	\end{split}
     \end{equation}

     Now we proceed as in \cite[Section 4]{CogMau}. Introduce a bump function $\varphi$ (see Section 1.4), and decompose $J$ into the following terms:
     \begin{align*}
     	J&= \tr\big[(Q(0)-Q)D^2G\big]\varphi  \\
     	&\quad+ \tr\big[ \big((Q^{\delta}(0)-Q^{\delta})-(Q(0)-Q)\big)D^2G\big]\varphi\\
     	&\quad+\tr\big[(Q^{\delta}(0)-Q^{\delta})D^2G\big](1-\varphi)\\
     	&:= A + R_1 + R_3;
     \end{align*}
we adopt the same notation as in \cite{CogMau} to facilitate the citation of the results there. From \cite[Lemmas 4.3--4.5]{CogMau} (with $\epsilon=\alpha$ there), we have the estimates
      \begin{align*}
      	&\widehat{A}(\xi)\leq -c_{\alpha}\langle \xi\rangle^{-2\alpha}+C\langle \xi\rangle^{-2},\quad
      	\widehat{R_3}(\xi)\leq C\vert \xi\vert^{-2}, \quad \forall\xi\in \R^2,\\
      	&|R_1(x)|\leq C\delta^{\alpha}|x|^{-2+\alpha}\varphi(x),\quad \forall x\in \R^2,
      \end{align*}
where $c_{\alpha},C$ are two constants depending only on $\alpha\in (0,1/2)$. Thus by Young's inequality, we obtain that
      \begin{equation*}
      	\begin{split}
      		\big\langle J*\omega^{\delta},\omega^{\delta} \big\rangle
      		&= \big\langle A*\omega^{\delta},\omega^{\delta} \big\rangle
      		+ \big\langle R_3*\omega^{\delta},\omega^{\delta} \big\rangle
      		+\big\langle R_1*\omega^{\delta},\omega^{\delta} \big\rangle \\
      		&= \int \big( \widehat{A}(\xi) + \widehat{R_3}(\xi)\big) \big|\widehat{\omega^{\delta}}(\xi)\big|^2 \,\d \xi
      		+\iint R_1(x-y) \omega^{\delta}(x) \omega^{\delta}(y)\,\d x\d y\\
      		&\leq -c_{\alpha}\|\omega^{\delta}\|_{H^{-\alpha}}^2 + C\|\omega^{\delta}\|_{\dot{H}^{-1}}
      		+ C\delta^{\alpha}\|\omega^{\delta}\|_{L^2}^2 \\
      		&\leq -c_{\alpha}\|\omega^{\delta}\|_{\dot{H}^{-\alpha}}^2 + C\|\omega^{\delta}\|_{\dot{H}^{-1}}^2
      		+ o(1),
      	\end{split}
      \end{equation*}
where the last line follows from $\|\omega^{\delta}\|_{\dot{H}^{-\alpha}}^2\lesssim \|\omega^{\delta}\|_{\dot{H}^{-1}}^2 + \|\omega^{\delta}\|_{H^{-\alpha}}^2$, Lemma \ref{wellpoesd-reg-model} and the assumption \eqref{relabel}.

      Substituting the above estimates into \eqref{Ito-reg-w-H^-1}, integrating in time and taking expectation, we get, for every $t\in [0,T]$,
      \begin{align*}
      	\mathbb{E}\big[\Vert\omega_t^{\delta}\Vert_{\dot{H}^{-1}}^2\big] +\int_{0}^{t}\mathbb{E}\left[\Vert\omega_s^{\delta}\Vert_{\dot{H}^{-\alpha}}^2\right]\d s
      	\lesssim_{\alpha} \Vert\omega_0^{\delta}\Vert_{\dot{H}^{-1}}^2
      	+ \int_{0}^{t}\mathbb{E}\big[\Vert\omega_s^{\delta}\Vert_{\dot{H}^{-1}}^2\big]\,\d s
      	+ \int_{0}^{t} \E\big[ \|\theta_s^{\delta}\|_{L^2}^2 \big]\,\d s+o(1).
      \end{align*}
      Then by the $L^2$-bounds in Lemma \ref{wellpoesd-reg-model}, convergence of initial data \eqref{converge-initial-1}, \eqref{converge-initial-2} and Gr\"onwall's inequality, we obtain
      \begin{equation}\label{H^-1-mid}
      	\begin{split}
      		\sup_{t\in [0,T]}\E\Big[ \|\omega_t^{\delta}\|_{\dot{H}^{-1}}^2 \Big]
      		+\int_{0}^{T}\E\big[\|\omega_t^{\delta}\|_{\dot{H}^{-\alpha}}^2 \big]\, \d t
      		&\lesssim_{\alpha, T} \|\omega_0\|_{\dot{H}^{-1}}^2 +  \|\theta_0\|_{L^2}^2 + o(1).
      	\end{split}
    \end{equation}

To complete the proof, we need to estimate the $C\big([0,T],\dot{H}^{-1}\big)$ norm of $\omega^{\delta}$; by \eqref{Ito-reg-w-H^-1}, the computations are similar as above, except the martingale part, thus
    \begin{align*}
    	\E\bigg[\sup_{t\in [0,T]} \|\omega^{\delta}_t\|_{\dot{H}^{-1}}^2 \bigg]
    	&\leq \|\omega^{\delta}_0\|_{\dot{H}^{-1}}^2 + C\int_{0}^{T}\Big(\E\big[\|\omega^{\delta}_t\|_{\dot{H}^{-1}}^2 \big] +  \E\big[\|\theta_t^{\delta}\|_{L^2}^2\big]\Big)\, \d t \\
    	&\quad +\E\bigg[\sup_{t\in [0,T]}\bigg|\sum_{k}\int_{0}^{t}\big\langle G*\omega_s^{\delta}, \sigma_k^{\delta}\cdot\nabla\omega_s^{\delta}  \big\rangle \,\d W_s^k\bigg|\bigg].
    \end{align*}
    To bound the stochastic integral term, from the Burkholder-Davis-Gundy inequality, we have
    \begin{align*}
    	M:=\E\bigg[\sup_{t\in [0,T]}\bigg|\sum_{k}\int_{0}^{t}\big\langle G*\omega_s^{\delta}, \sigma_k^{\delta}\cdot\nabla\omega_s^{\delta}  \big\rangle \,\d W_s^k\bigg|\bigg]
    	&\lesssim \E\Bigg[\bigg(\int_{0}^{T}\sum_{k}\big|\big\langle G*\omega_t^{\delta}, \sigma_k^{\delta}\cdot\nabla\omega_t^{\delta} \big\rangle\big|^2 \,\d t\bigg)^\frac{1}{2}\Bigg].
    \end{align*}
By elementary calculations, one can show the following identity:
    \begin{equation}\label{identity}
    	\sigma_k^{\delta}\cdot \nabla \omega^{\delta}= \curl \big((\sigma_k^{\delta}\cdot \nabla)u^{\delta} + (D\sigma_k^{\delta})^T u^{\delta}\big),\quad\text{where } (D\sigma_k^{\delta})_{ij}^T=\partial_i\sigma_k^{\delta,j}.
    \end{equation}
    Hence by integration by parts, we have
    \begin{equation*}
    	\begin{split}
    		\sum_{k}|\langle G*\omega^{\delta}, \sigma_k^{\delta}\cdot\nabla\omega^{\delta} \rangle|^2
    		&=\sum_{k}\Big|\Big\langle G*\omega^{\delta}, \curl \big((\sigma_k^{\delta}\cdot \nabla)u^{\delta} + (D\sigma_k^{\delta})^T u^{\delta}\big) \Big\rangle\Big|^2\\
    		&=\sum_{k}
    		|\langle u^{\delta},\sigma_k^{\delta}\cdot\nabla u^{\delta}+ (D\sigma_k^{\delta})^T u^{\delta} \rangle|^2.
    	\end{split}	
    \end{equation*}
Since $\div \sigma_k^{\delta}=0$, we have $\langle u^{\delta},\sigma_k^{\delta}\cdot\nabla u^{\delta}\rangle=0$, hence by Lemma \ref{reg-Q} with $m=n=1$, we obtain
    \begin{align*}
    	\sum_{k}|\langle G*\omega^{\delta}, \sigma_k^{\delta}\cdot\nabla\omega^{\delta} \rangle|^2
    	&=\sum_{k}|\langle u^{\delta}, (D\sigma_k^{\delta})^T u^{\delta} \rangle|^2 \\
    	&=\iint \sum_{k}u^{\delta,i}(x)\partial_i\sigma_k^{\delta,j}(x)u^{\delta,j}(x)\,
    	u^{\delta,l}(y)\partial_l\sigma_k^{\delta,m}(y)u^{\delta,m}(y) \,\d x\d y \\
    	&= -\big\langle u^{\delta,i}u^{\delta,j},\partial_{il}Q^{\delta,jm}*(u^{\delta,l}u^{\delta,m})\big\rangle \\
    	&\leq \max_{i,j}\int \big|\widehat{u^{\delta,i}u^{\delta,j}}(\xi)\big|^2~ \<\xi\>^2\big|\widehat{Q}(\xi)\big|\,\d \xi \lesssim \|u^{\delta}\otimes u^{\delta}\|_{H^{-\alpha}}^2,
    \end{align*}
where in the last step we have used \eqref{Q^}.
By Sobolev embedding $L^{\frac{2}{1+\alpha}}\hookrightarrow H^{-\alpha}$, $\dot{H}^{1-\alpha}\hookrightarrow L^{\frac{2}{\alpha}}$ and H\"older's inequality  we have
    \begin{equation*}
    	\|u^{\delta}\otimes u^{\delta}\|_{H^{-\alpha}} \lesssim  	\|u^{\delta}\otimes u^{\delta}\|_{L^{\frac{2}{1+\alpha}}}
    	\lesssim \|u^{\delta}\|_{L^2}  \|u^{\delta}\|_{L^{\frac{2}{\alpha}}}
    	\lesssim \|u^{\delta}\|_{L^2}  \|u^{\delta}\|_{\dot{H}^{1-\alpha}}.
    \end{equation*}
Since $\|u^{\delta}\|_{L^2}^2=(2\pi)^{-2}\|\omega^{\delta}\|_{\dot{H}^{-1}}^2$ and $\|u^{\delta}\|_{\dot{H}^{1-\alpha}}^2=(2\pi)^{-2}\|\omega^{\delta}\|_{\dot{H}^{-\alpha}}^2$, summing up the above arguments, we have, for every $0<\epsilon\ll1$,
    \begin{align*}
    	M
    	&\lesssim \E\Bigg[\bigg(\int_{0}^{T}  \|\omega_t^{\delta}\|_{\dot{H}^{-1}}^2  \|\omega_t^{\delta}\|_{\dot{H}^{-\alpha}}^2  \,\d t\bigg)^\frac{1}{2}\Bigg]   \\
    	& \leq\E\Bigg[  \sup_{t\in [0,T]} \|\omega_t^{\delta}\|_{\dot{H}^{-1}}\bigg(\int_{0}^{T}    \|\omega_t^{\delta}\|_{\dot{H}^{-\alpha}}^2  \,\d t\bigg)^\frac{1}{2}\Bigg] \\
    	&\leq \epsilon\, \E\bigg[\sup_{t\in [0,T]} \|\omega_t^{\delta}\|_{\dot{H}^{-1}}^2 \bigg]
    	+ C_{\epsilon} \int_{0}^{T}\E\big[\|\omega_t^{\delta}\|_{\dot{H}^{-\alpha}}^2 \big]
    	\, \d t,
    \end{align*}
    which complete the proof combined with \eqref{H^-1-mid}.
\end{proof}

Notice that we have derived the bounds on $\|\omega^\delta\|_{C_T\dot{H}^{-1}},\|\omega^\delta\|_{L_T^2 \dot{H}^{-\alpha}}$ and $\|\theta^{\delta}\|_{C_TL^2\cap L_T^2H^1}$ which are uniform in $\delta$. Now we show how to obtain uniform control of $\|\omega^\delta\|_{L_T^{\infty}L^{p}}.$ 
The $L^p$ bound of vorticity is conservative since the drift term and noise term are divergence free and the dissipation term $\Delta\omega^{\delta}$ does not increase this bound. Indeed, we have for every $1\leq p<\infty$ (valid also for $L^{\infty}$ norm),
\begin{equation}\label{w-Lp-mid}
	 \|\omega_t^{\delta} \|_{L^{p}} \le \|\omega_0^{\delta} \|_{L^p}+\int_{0}^{t}\|\nabla\theta_s^{\delta}\|_{L^{p}} \,\d s,\quad t\in [0,T],
\end{equation}
see \cite[Proposition 3.1]{GalLuo23} for a detailed proof. The above inequality shows that we should bound $L_T^1L^p$ norm of gradient of temperature  to control the vorticity.

\subsubsection*{Case 1: $\omega_0\in L^2$}

If $\omega_0\in L^2$, then from Lemma \ref{wellpoesd-reg-model}, \eqref{converge-initial-1} and \eqref{converge-initial-2}, we have the  uniform $L^2$ bound: as $\delta\rightarrow 0$,
\begin{equation}\label{case1}
	\|\omega_t^{\delta}\|_{L^2}\leq \|\omega_0\|_{L^2} +\sqrt{t}\|\theta_0\|_{L^2} + o(1),\quad \forall t\in [0,T], ~\P\text{-a.s.}
\end{equation}

\subsubsection*{Case 2: $\omega_0\in L^p,~1\leq p<2$}

We can rewrite the equation for $\theta^{\delta}$ in mild form in the current smooth setting, that is,
\begin{equation*}
	\theta_t^{\delta}=e^{t\Delta}\theta_0^{\delta} -\int_{0}^{t} e^{(t-s)\Delta} \big(u_s^{\delta}\cdot\nabla\theta_s^{\delta}\big)\,\d s,\quad \forall t\in [0,T],
\end{equation*}
where $(e^{t\Delta})_{t\geq 0}$ is the heat semigroup generated by $\Delta$. With Young's inequality, we have for all $t\in [0,T]$,
\begin{align*}
    \|\nabla\theta_t^{\delta}\|_{L^1}
    &\leq \|\nabla e^{t\Delta}\theta_0^{\delta}\|_{L^1}
    + \int_{0}^{t} \big\|\nabla e^{(t-s)\Delta} \big(u_s^{\delta}\cdot\nabla\theta_s^{\delta}\big)\big\|_{L^1}\,\d s  \\
    &\lesssim  t^{-\frac{1}{2}} \|\theta_0^{\delta}\|_{L^1}
    + \int_{0}^{t} (t-s)^{-\frac{1}{2}}  \|u_s^{\delta}\|_{L^2}\,\|\nabla\theta_s^{\delta}\|_{L^2}\,\d s \\
    &\lesssim t^{-\frac{1}{2}} \|\theta_0^{\delta}\|_{L^1}
    + \|u^{\delta}\|_{L_T^{\infty}L^2}\int_{0}^{t} (t-s)^{-\frac{1}{2}}  \,\|\nabla\theta_s^{\delta}\|_{L^2}\,\d s.
\end{align*}
Integrating over $[0,T]$ and by Fubini's theorem, we get $\P\text{-a.s.}$,
\begin{equation}\label{tidu-theta-L1}
	\begin{split}
		\int_{0}^{T} \|\nabla\theta_t^{\delta}\|_{L^1}\,\d t
		&\lesssim   T^{\frac{1}{2}} \|\theta_0^{\delta}\|_{L^1} + \|\omega^{\delta}\|_{C_T\dot{H}^{-1}}\int_{0}^{T} (T-s)^{\frac{1}{2}}  \,\|\nabla\theta_s^{\delta}\|_{L^2}\,\d s \\
		&\lesssim T^{\frac{1}{2}} \|\theta_0^{\delta}\|_{L^1} + T \|\omega^{\delta}\|_{C_T\dot{H}^{-1}}\bigg(\int_{0}^{T}   \|\nabla\theta_s^{\delta}\|_{L^2}^2\,\d s\bigg)^{1/2} \\
		&\lesssim_T  \|\theta_0^{\delta}\|_{L^1} +  \|\theta_0^{\delta}\|_{L^2} \|\omega^{\delta}\|_{C_T\dot{H}^{-1}}.
	\end{split}
\end{equation}
Combining Lemma \ref{wellpoesd-reg-model}, \eqref{w-Lp-mid} and  \eqref{tidu-theta-L1}, we arrive at, $\P\text{-a.s.}$,
\begin{equation}\label{case2}
	\begin{split}
		\|\omega^{\delta}\|_{L_T^{\infty}L^{p}}
		&\leq  \|\omega_0^{\delta}   \|_{L^p}+\int_{0}^{T}\|\nabla\theta_s^{\delta}\|_{L^{p}} \,\d s \\
		&\leq \|\omega_0^{\delta} \|_{L^p}+\int_{0}^{T}\|\nabla\theta_s^{\delta}\|_{L^{1}\cap L^2} \,\d s  \\
		&\leq \|\omega_0^{\delta} \|_{L^p}
		+ T^{\frac{1}{2}} \|\theta_0^{\delta}\|_{L^2} + T^{\frac{1}{2}} \|\theta_0^{\delta}\|_{L^1}
		+T \|\theta_0^{\delta}\|_{L^2} \|\omega^{\delta}\|_{C_T\dot{H}^{-1}}          \\
		&= C\Big(T, \|\omega_0 \|_{L^p},~ \|\theta_0\|_{L^1\cap L^2},~\|\omega^{\delta}\|_{C_T\dot{H}^{-1}}\Big),
	\end{split}
\end{equation}
where in this section we use the notation $C\big(T, \|\omega_0 \|_{L^p},~ \|\theta_0\|_{L^1\cap L^2},~\|\omega^{\delta}\|_{C_T\dot{H}^{-1}}\big)$ to denote a $\R_+$-valued random variable depending on $\|\omega^{\delta}\|_{C_T\dot{H}^{-1}}$ linearly and initial data.
\subsubsection*{Case 3: $\omega_0\in L^1\cap L^p,~ 2<p<\infty$}
Since $\omega_0\in L^1\cap L^p\subset L^1\cap L^2$ for every $p>2$, we have
\begin{equation*}
	\|\omega^{\delta}\|_{L_T^{\infty}(L^1\cap L^2)}\leq C\Big(T, \|\omega_0 \|_{L^1\cap L^2}, \|\theta_0\|_{L^1\cap L^2},\|\omega^{\delta}\|_{C_T\dot{H}^{-1}}\Big).
\end{equation*}

Now choosing $1<l<2<q<\infty,\frac{1}{q}=\frac{1}{l}-\frac{1}{2}$ such that $p<q$, we have $\|u^\delta\|_{L^q}\lesssim \|\omega^\delta\|_{L^l}$ by Sobolev embedding. Using heat kernel estimates and H\"older's inequality, we get for all $t\in [0,T]$,
\begin{align*}
	 \|\nabla\theta_t^{\delta}\|_{L^p}
	&\leq \|\nabla e^{t\Delta}\theta_0^{\delta}\|_{L^p}
	+ \int_{0}^{t} \big\|\nabla e^{(t-s)\Delta} \big(u_s^{\delta}\cdot\nabla\theta_s^{\delta}\big)\big\|_{L^p}\,\d s  \\
&\lesssim t^{-1+\frac{1}{p}} \|\theta_0^{\delta}\|_{L^2} + \int_0^t (t-s)^{-\frac12- \frac1l + \frac1p} \|u_s^{\delta}\cdot\nabla\theta_s^{\delta}\|_{L^l}\, ds \\
	&\lesssim  t^{-1+\frac{1}{p}} \|\theta_0^{\delta}\|_{L^2}
	+ \int_{0}^{t} (t-s)^{-1+\frac{1}{p}-\frac{1}{q}}  \|u_s^{\delta}\|_{L^q}\,\|\nabla\theta_s^{\delta}\|_{L^2}\,\d s ; \end{align*}
since $\|u_s^{\delta}\|_{L^q}\lesssim \|\omega^\delta_s\|_{L^l} \le \|\omega^\delta\|_{L^\infty_T L^l} \lesssim \|\omega^\delta\|_{L^\infty_T (L^1\cap L^2)}$, we have
\begin{align*}
	\|\nabla\theta_t^{\delta}\|_{L^p}
	& \lesssim t^{-1+\frac{1}{p}} \|\theta_0^{\delta}\|_{L^2} + \|\omega^{\delta}\|_{L_T^{\infty}(L^1\cap L^2)}\int_{0}^{t} (t-s)^{-1+\frac{1}{p}-\frac{1}{q}}  \,\|\nabla\theta_s^{\delta}\|_{L^2}\,\d s.
\end{align*}
Integrating over $[0,T]$ and by Fubini's theorem, we obtain
\begin{align*}
	\int_{0}^{T} \|\nabla\theta_t^{\delta}\|_{L^p}\,\d t
	&\lesssim_p   T^{\frac{1}{p}} \|\theta_0^{\delta}\|_{L^2} + \|\omega^{\delta}\|_{L_T^{\infty}(L^1\cap L^2)}\int_{0}^{T} (T-s)^{\frac{1}{p}-\frac{1}{q}}  \,\|\nabla\theta_s^{\delta}\|_{L^2}\,\d s \\
	&\leq T^{\frac{1}{p}} \|\theta_0^{\delta}\|_{L^2} + \|\omega^{\delta}\|_{L_T^{\infty}(L^1\cap L^2)} T^{\frac{1}{p}-\frac{1}{q}+\frac{1}{2}}
	\|\nabla\theta^{\delta}\|_{L_T^2L^2}          \\
	&\leq C\Big(p, T,\|\theta_0^{\delta}\|_{L^2},~  \|\omega^{\delta}\|_{L_T^{\infty}(L^1\cap L^2)}\Big)\\
	&\leq  C\Big(p, T,\|\omega_0\|_{L^1\cap L^2},~\|\theta_0\|_{L^1\cap L^2},~  \|\omega^{\delta}\|_{C_T\dot{H}^{-1}}\Big).
\end{align*}
As a result, we obtain the  bound
\begin{equation}\label{case3}
	\|\omega^{\delta}\|_{L_T^{\infty}(L^1\cap L^p)}\leq C\Big(T, \|\omega_0 \|_{L^1\cap L^p}, \|\theta_0\|_{L^1\cap L^2},\|\omega^{\delta}\|_{C_T\dot{H}^{-1}}\Big), \quad \P\text{-a.s.}
\end{equation}

The estimates \eqref{case1}, \eqref{case2} and \eqref{case3} are the desired bounds.
We end this section by deriving the bound for $\|\theta^{\delta}\|_{\dot{H}^{-1}}$ uniform in $\delta$.
Due to the high regularity of $\theta^{\delta}$, we can compute
\begin{align}\label{theta-delta-H^-1}
	\frac{\d}{\d t} \< \theta^{\delta},G*\theta^{\delta} \> + 2\|\theta^{\delta}\|_{L^2}^2 = -2\big\< u^{\delta}\cdot \nabla\theta^{\delta}, G*\theta^{\delta}\big\>.
\end{align}
By integration by parts and Lemma \ref{product} with $a=2\alpha, b=1-\alpha$, we get
\begin{align*}
	\big|\big\< u^{\delta}\cdot \nabla\theta^{\delta}, G*\theta^{\delta}\big\>\big|
	&=
	\big|\big\< u^{\delta}\cdot \big(\nabla G*\theta^{\delta}\big), \theta^{\delta} \big\>\big| \\
	&\lesssim
	\|u^{\delta}\|_{\dot{H}^{2\alpha}}
	\|\nabla G*\theta^{\delta}\|_{\dot{H}^{1-\alpha}}
	\|\theta^{\delta}\|_{\dot{H}^{-\alpha}} \\
	&\leq
	\|\omega^{\delta}\|_{\dot{H}^{2\alpha-1}}
	\|\theta^{\delta}\|_{\dot{H}^{-\alpha}}^2   \\
	&\leq
	\|\theta^{\delta}\|_{L^2}^2
	+ C_{\alpha}\|\omega^{\delta}\|_{\dot{H}^{2\alpha-1}}^{1/\alpha}
	\|\theta^{\delta}\|_{\dot{H}^{-1}}^2,
\end{align*}
where in the last step we have used interpolation $\dot{H}^{-1}\cap L^2\subset \dot{H}^{-\alpha}$ and  Young's inequality.

Recall $0<\alpha<1-\frac{1}{p\wedge2}$, then from Sobolev embedding $L^{p\wedge 2}\hookrightarrow \dot{H}^{1-\frac{2}{p\wedge 2}}$, and interpolation $\dot{H}^{-1}\cap \dot{H}^{1-\frac{2}{p\wedge 2}}\subset \dot{H}^{2\alpha-1}$, we have $\P\text{-a.s. }$,
\begin{align*}
	\|\omega_t^{\delta}\|_{\dot{H}^{2\alpha-1}}
	\leq \|\omega_t^{\delta}\|_{\dot{H}^{-1}}
	+ \|\omega_t^{\delta}\|_{L^{p\wedge 2}}
	\leq  C\Big(\|\omega^{\delta}\|_{C([0,t]; \dot{H}^{-1})}\Big), \quad \forall t\in [0,T],
\end{align*}
where in the last step we have used  the pathwise bounds \eqref{case1} and \eqref{case2}, omitting the dependence on time and initial data. Combining these estimates with \eqref{theta-delta-H^-1}, we obtain
\begin{equation*}
	\frac{\d}{\d t}  \|\theta^{\delta}\|_{\dot{H}^{-1}}^2 + \|\theta^{\delta}\|_{L^2}^2
	\leq C^{\frac{1}{\alpha}}\Big(\|\omega^{\delta}\|_{C([0,t]; \dot{H}^{-1})}\Big) \|\theta^{\delta}\|_{\dot{H}^{-1}}^2.
\end{equation*}
By Gr\"onwall's inequality, we arrive at
\begin{equation}\label{theta-delta-H^-1-bdd1}
	\|\theta^{\delta}\|_{C_T\dot{H}^{-1}}^2
	\leq e^{TC^{\frac{1}{\alpha}}\big(\|\omega^{\delta}\|_{C_T \dot{H}^{-1}}\big)} \|\theta_0^{\delta}\|_{\dot{H}^{-1}}^2, \quad \P\text{-a.s.}
\end{equation}

\section{Almost sure convergence in $C_T \dot{H}^{-1}$} \label{Converge}

Instead of showing tightness of the laws of $(\omega^{\delta},\theta^{\delta})_{\delta}$ and exploiting the Skorohod representation theorem to get almost sure convergence in $C_T\dot{H}^{-1}$ on a new probability space, in this section we  directly show that there exists a subsequence  $(\omega^{\delta_n},\theta^{\delta_n})_{n\in \N}$ which is Cauchy in space $C_T\dot{H}^{-1}\times C_T\dot{H}^{-1}$ for almost every sample path.

Due to some technical reasons, in this section we temporarily assume that $\omega_0\in L^1\cap L^p,\,p\geq 2$, in which case $\|\omega_0^{\delta}\|_{L^2}$ is uniformly bounded. This will facilitate the estimate of \eqref{tech-bdd}.

\begin{prop}\label{converge-H^-1}
	Assume $\omega_0\in \dot{H}^{-1}\cap L^1\cap L^p$ with $p\geq 2$, then for every $\epsilon>0$, we have
	\begin{equation*}
		\lim_{\delta_1,\delta_2\rightarrow 0}
	    \P\Big( \|\omega^{\delta_1}-\omega^{\delta_2}\|_{C_{T}\dot{H}^{-1}}>\epsilon \Big)=0,\quad
	    \lim_{\delta_1,\delta_2\rightarrow 0}
	    \P\Big( \|\theta^{\delta_1}-\theta^{\delta_2}\|_{C_{T}\dot{H}^{-1}}>\epsilon \Big)=0.
	\end{equation*}
\end{prop}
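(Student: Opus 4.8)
The plan is to control the $\dot H^{-1}\times\dot H^{-1}$ norm of the difference of the two approximate solutions and then to combine the resulting stochastic Grönwall estimate with a stopping-time localisation. Write $\omega^i=\omega^{\delta_i}$, $\theta^i=\theta^{\delta_i}$, $u^i=u^{\delta_i}$ and set $\bar\omega=\omega^1-\omega^2$, $\bar\theta=\theta^1-\theta^2$, $\bar u=u^1-u^2=K*\bar\omega$. Subtracting the two copies of \eqref{regular-model} and applying It\^o's formula on $\dot H^{-1}$ to $\<\bar\omega,G*\bar\omega\>$ (exactly as in Proposition \ref{key-bdd}), together with the deterministic computation of $\tfrac{\d}{\d t}\<\bar\theta,G*\bar\theta\>$, I would organise the right-hand side into four groups: (i) the quadratic noise terms of $\bar\omega$; (ii) the transport nonlinearities; (iii) the coupling terms $\partial_1\bar\theta$ and $\bar u\cdot\nabla\theta^2$; (iv) the error terms caused by $\delta_1\ne\delta_2$. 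The aim is a differential inequality for $\Phi(t):=\|\bar\omega_t\|_{\dot H^{-1}}^2+\|\bar\theta_t\|_{\dot H^{-1}}^2$.

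For (ii), the stream-function cancellation $\<G*\omega^i,u^i\cdot\nabla\omega^i\>=0$ reduces the vorticity nonlinearity to $\<\bar u\cdot(u^2)^{\perp},\bar\omega\>$, which I would bound by Lemma \ref{product} (with $\bar u\in\dot H^{1-\alpha-\eta}$ and $u^2\in\dot H^{2\alpha+\eta}$, so that the product lies in $\dot H^{\alpha}$), followed by interpolation of $\|\bar\omega\|_{\dot H^{-\alpha-\eta}}$ between $\dot H^{-1}$ and $\dot H^{-\alpha}$ and Young's inequality, yielding $\eps\|\bar\omega\|_{\dot H^{-\alpha}}^2+C(t)\|\bar\omega\|_{\dot H^{-1}}^2$. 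Here the hypothesis $p\ge2$ (giving $\omega^i\in L^2$, hence $u^i\in\dot H^1$) is exactly what makes the exponent $2\alpha+\eta$ admissible and lets one avoid the constraint $\alpha>2/(p\wedge2)-1$ of \cite{CogMau} via \cite[Lemma 3.1]{JiaoLuo}. The term $\<G*\bar\theta,u^1\cdot\nabla\bar\theta\>$ does not vanish and is treated precisely as in the a priori bound \eqref{theta-delta-H^-1}, using Lemma \ref{product} and $\|\bar\theta\|_{\dot H^{-\alpha}}^2\le\|\bar\theta\|_{\dot H^{-1}}^{2\alpha}\|\bar\theta\|_{L^2}^{2(1-\alpha)}$, so that the heat dissipation $-2\|\bar\theta\|_{L^2}^2$ absorbs the top-order part. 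For (iii), $2\<G*\bar\omega,\partial_1\bar\theta\>\le\eps\|\bar\theta\|_{L^2}^2+C\|\bar\omega\|_{\dot H^{-1}}^2$ is absorbed by the same dissipation, while $\<G*\bar\theta,\bar u\cdot\nabla\theta^2\>=\<\bar u\,\theta^2,\nabla G*\bar\theta\>$ is estimated by Lemma \ref{product} (with $\bar u\,\theta^2\in\dot H^{\alpha-1}$) into $\eps(\|\bar\omega\|_{\dot H^{-\alpha}}^2+\|\bar\theta\|_{L^2}^2)+C(t)\Phi$, its $\dot H^{-\alpha}$ part being absorbed by the anomalous dissipation of $\bar\omega$. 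All coefficients $C(t)$ depend only on $\|\omega^i\|_{L^\infty_T(L^1\cap L^2)}$ and $\|\theta^i\|_{L^\infty_TL^2\cap L^2_TH^1}$ and are pathwise finite by Cases 1--3.

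The crux is group (iv), and I expect it to be the main obstacle. I would split the martingale integrand as $\sigma_k^{\delta_1}\omega^1-\sigma_k^{\delta_2}\omega^2=\sigma_k^{\delta_1}\bar\omega+(\sigma_k^{\delta_1}-\sigma_k^{\delta_2})\omega^2$. The contribution of $\sigma_k^{\delta_1}\bar\omega$, combined with the It\^o--Stratonovich correction $\tfrac{c_{\delta_1}}{2}\Delta\bar\omega$, reproduces the good quantity $\<\tr[(Q^{\delta_1}(0)-Q^{\delta_1})D^2G]*\bar\omega,\bar\omega\>\le-c_\alpha\|\bar\omega\|_{\dot H^{-\alpha}}^2+C\|\bar\omega\|_{\dot H^{-1}}^2+o(1)$ of Proposition \ref{key-bdd}. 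The genuinely new terms involve $\sigma_k^{\delta_1}-\sigma_k^{\delta_2}=(\rho^{\delta_1}-\rho^{\delta_2})*\sigma_k$, whose associated covariance has Fourier symbol proportional to $\widehat Q(\xi)\big(\widehat{\rho^{\delta_1}}(\xi)-\widehat{\rho^{\delta_2}}(\xi)\big)^2$, supported in $\{|\xi|\ge 1/\max(\delta_1,\delta_2)\}$; hence the pure-error term is $\int m(\xi)|\widehat{\omega^2}(\xi)|^2\,\d\xi\lesssim \max(\delta_1,\delta_2)^{2+2\alpha}\|\omega^2\|_{L^2}^2$, which tends to $0$ precisely because $\omega^2\in L^2$, while the cross-error term is dominated, via Cauchy--Schwarz in $k$, by $\eps\|\bar\omega\|_{\dot H^{-\alpha}}^2$ plus this pure error. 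The remaining errors $(c_{\delta_1}-c_{\delta_2})\<\bar\omega,\omega^2\>$ and $\delta_i\<\bar\omega,\omega^i\>$ are $O(\delta^{2\alpha})$ and $O(\delta)$ respectively and vanish in expectation. This mechanism, together with the $o(1)=\delta_1^{\alpha}\|\bar\omega\|_{L^2}^2$ from Proposition \ref{key-bdd}, is why the proposition is restricted to $p\ge2$ (cf. Remark \ref{tech-error-noise}); the case $p\in(1,2)$ is deferred and handled later by re-regularising with a single fixed noise, so that no noise-difference error arises.

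Summing the two estimates and absorbing all $\eps$-terms into the coercive $-c_\alpha\|\bar\omega\|_{\dot H^{-\alpha}}^2-2\|\bar\theta\|_{L^2}^2$, I obtain $\d\Phi\le\big(C(t)\Phi+\mathcal E_{\delta_1,\delta_2}\big)\,\d t+\d M_t$ with $\Phi(0)\to0$ (by \eqref{converge-initial-1}--\eqref{converge-initial-2}), $\E\int_0^T\mathcal E_{\delta_1,\delta_2}\,\d t\to0$, and $C(t)$ pathwise integrable but not uniformly bounded in $\omega$. To convert this into convergence in probability I would introduce the stopping times $\tau_R=\inf\{t:\max_i\|\omega^i_t\|_{\dot H^{-1}}>R\}\wedge T$, on which $C(t)\le C_R$ is deterministic; taking expectations on $[0,\tau_R]$ (the martingale then vanishes) and applying Grönwall gives $\E[\Phi(t\wedge\tau_R)]+\E\int_0^{\tau_R}\|\bar\omega\|_{\dot H^{-\alpha}}^2\,\d t\lesssim e^{C_RT}\big(\Phi(0)+o(1)\big)$, after which a Burkholder--Davis--Gundy estimate (using the bound just obtained on $\E\int\|\bar\omega\|_{\dot H^{-\alpha}}^2$, exactly as at the end of Proposition \ref{key-bdd}) upgrades this to $\E[\sup_{t\le\tau_R}\Phi]\to0$ as $\delta_1,\delta_2\to0$ for each fixed $R$. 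Finally, from $\P(\|\bar\omega\|_{C_T\dot H^{-1}}>\eps)\le\P(\tau_R<T)+\eps^{-2}\,\E[\sup_{t\le\tau_R}\Phi]$ and the uniform bound $\P(\tau_R<T)\le 2R^{-2}\sup_\delta\E\|\omega^\delta\|_{C_T\dot H^{-1}}^2\lesssim_{\alpha,T}R^{-2}$ from Proposition \ref{key-bdd}, I would first fix $R$ large and then let $\delta_1,\delta_2\to0$ to conclude; since $\Phi$ controls both components, the same argument simultaneously yields the limit for $\bar\theta$.
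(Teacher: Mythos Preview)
Your proposal follows essentially the same route as the paper: estimate $\|\bar\omega\|_{\dot H^{-1}}^2+\|\bar\theta\|_{\dot H^{-1}}^2$ by It\^o's formula, extract anomalous $\dot H^{-\alpha}$ dissipation from the noise, absorb the nonlinearities via Lemma~\ref{product} and interpolation, show the noise-difference errors vanish using $\omega^2\in L^2$ (this is where $p\ge2$ enters), localise by the stopping time $\tau_M=\inf\{t:\max_i\|\omega^i_t\|_{\dot H^{-1}}>M\}$, apply Gr\"onwall and BDG, and conclude via Chebyshev plus Proposition~\ref{key-bdd}.

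Two places where your sketch diverges from the paper's details deserve correction. First, the relevant vorticity cancellation is not the self-interaction $\<G*\omega^i,u^i\cdot\nabla\omega^i\>=0$; rather, after writing $u^1\cdot\nabla\omega^1-u^2\cdot\nabla\omega^2=u^1\cdot\nabla\bar\omega+\bar u\cdot\nabla\omega^2$, it is $\<G*\bar\omega,\bar u\cdot\nabla\omega^2\>=-\<\bar u\cdot\nabla(G*\bar\omega),\omega^2\>=0$ that vanishes (since $\bar u=\nabla^\perp G*\bar\omega$). The surviving term is $\<G*\bar\omega,u^1\cdot\nabla\bar\omega\>$, which the paper bounds by $\|\omega^1\|_{\dot H^{2\bar\alpha-1}}\|\bar\omega\|_{\dot H^{-\bar\alpha}}^2$ for some $\bar\alpha\in(\alpha,1-\tfrac{1}{p\wedge2})$; note this is quadratic in $\bar\omega$, not the mixed expression $\<\bar u\cdot(u^2)^\perp,\bar\omega\>$ you write. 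Second, for the noise-difference errors the paper does not argue on the Fourier side. The cross quadratic-variation term $\sum_k\<\sigma_k^1\cdot\nabla\bar\omega,\,G*(\bar\sigma_k\cdot\nabla\omega^2)\>$ vanishes \emph{exactly} because $\sum_k\sigma_k^1\otimes\bar\sigma_k=0$ (a consequence of $\widehat{\rho^{\delta_1}}\widehat{\rho^{\delta_2}}=\widehat{\rho^{\delta_1}}^{\,2}$ for indicator cutoffs with $\delta_2<\delta_1$), so your Cauchy--Schwarz step is unnecessary. And the pure-error kernel $\tr\big[\big((Q^{\delta_2}-Q^{\delta_1})(0)-(Q^{\delta_2}-Q^{\delta_1})\big)D^2G\big]$ is a product in physical space, hence a convolution in Fourier, so it does not reduce to ``$\int m(\xi)|\widehat{\omega^2}|^2$''; the paper instead uses a bump-function splitting, obtaining $O(\delta^\alpha)\|\omega^2\|_{L^2}^2$ near the origin and $O(\delta^\alpha)\,\E[\|\omega^2\|_{L^1}^2]$ at infinity (this second piece is why the $L^1$ bound on $\omega^2$, and hence Case~2, is also invoked).
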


\begin{proof}
Without  loss of generality, we assume $\delta_2<\delta_1$, let $\delta=\delta_1-\delta_2$. The strategy is to estimate the evolution of the sum $\|\omega^{\delta_1}-\omega^{\delta_2}\|_{\dot{H}^{-1}} + \|\theta^{\delta_1}- \theta^{\delta_2} \|_{\dot{H}^{-1}}$ by initial data. To simplify notations, we neglect the superscript $\delta$ and denote $(\omega^{\delta_i},\theta^{\delta_i},\sigma_k^{\delta_i},c_{\delta_i})$ as $(\omega^{i},\theta^{i},\sigma_k^{i},c_i), i=1,2$ and $(\omega^{\delta_1}-\omega^{\delta_2}, \theta^{\delta_1}-\theta^{\delta_2}, \sigma_k^{\delta_1}- \sigma_k^{\delta_2}, c_{\delta_1}-c_{\delta_2})$ as $(\omega,\theta,\bar{\sigma}_k,c)$. Accordingly, we have $u^i=K*\omega^i,i=1,2$ and $u=K*\omega$. From \eqref{regular-model}, the difference $(\omega,\theta)$ satisfies
	\begin{equation*}
		\left\{\aligned
		& \partial_t \omega + u^{1}\cdot \nabla\omega + u\cdot \nabla\omega^{2}
		+ \sum_{k}\big(\sigma_k^1 \cdot \nabla\omega + \bar{\sigma}_k\cdot \nabla\omega^{2}\big) \,\dot{W}^k
		=\partial_1\theta + \Big(\delta_1+\frac{c_1}{2}\Big)\Delta \omega +
		\Big(\delta+\frac{c}{2}\Big)\Delta \omega^2, \\
		& \partial_t\theta + u^{1}\cdot \nabla\theta + u\cdot \nabla\theta^{2} = \Delta \theta.
		\endaligned \right.
	\end{equation*}

\textbf{Step 1: \boldmath{$\d \|\omega^1-\omega^2 \|_{\dot{H}^{-1}}^2$}.}   Since $\omega= \omega^1-\omega^2$ is smooth, we apply It\^o's formula on $\dot{H}^{-1}$ and get
\begin{equation*}
	\d\,\< \omega, G*\omega \>
	= \sum_{k} \big(\d M_k^1 + \d M_k^2 \big)
	+ J_1\, \d t +J_2\, \d t+ J_3\, \d t,
\end{equation*}
where
\begin{align*}
    \d M_k^1&=-2\big\< G*\omega, \sigma_k^1 \cdot \nabla\omega\big\>\,\d W^k,\quad
	\d M_k^2=-2\big\< G*\omega, \bar{\sigma}_k \cdot \nabla\omega^2\big\>\,\d W^k,\\
	J_1&=-2\big\< G*\omega,u^1\cdot \nabla \omega \big\>
	-2\big\< G*\omega,u\cdot \nabla \omega^2 \big\>
	+ 2\big\< G*\omega, \partial_1\theta \big\> ,\\
	J_2&=(c_1+2\delta_1)\big\< G*\omega,\Delta\omega \big\>
	+ (c+2\delta)\big\< G*\omega,\Delta\omega^2 \big\>,\\
	J_3&=\sum_k\big\< \sigma_k^1 \cdot \nabla\omega + \bar{\sigma}_k\cdot \nabla\omega^{2}, G*\big(\sigma_k^1 \cdot \nabla\omega + \bar{\sigma}_k\cdot \nabla\omega^{2}\big) \big\>.
\end{align*}

We shall first compute terms $J_2,J_3$ to get the  anomalous regularity from noise. Recalling that $Q^{\delta_1}(0)=c_1I_2$ and $Q^{\delta_1}(0)-Q^{\delta_2}(0)=cI_2$, we have
\begin{align*}
	J_2 &=c_1\big\< G*\omega,\Delta\omega \big\>
	-c\big\< G*\omega^2,\Delta\omega^2 \big\>
	+c\big\< G*\omega^1,\Delta\omega^2 \big\>
	+2\delta_1 \big\< G*\omega,\Delta\omega \big\>+ 2\delta\big\< G*\omega,\Delta\omega^2 \big\>\\
	&= \big\< \mathrm{tr}\big[Q^{\delta_1}(0)D^2G\big]*\omega,\omega \big\> + \big\< \mathrm{tr}[\big(Q^{\delta_2}-Q^{\delta_1}\big)(0)D^2G]*\omega^2,\omega^2 \big\>
	 +c\<G*\omega^1,\Delta\omega^2 \>  \\
	&\quad+ 2\delta_1\big\< G*\omega,\Delta\omega \big\>
	+2\delta\big\< G*\omega,\Delta\omega^2\big\>
\end{align*}
and
\begin{align*}
	J_3 &= \sum_k\big\< \sigma_k^1 \cdot \nabla\omega, G*\big(\sigma_k^1 \cdot \nabla\omega \big) \big\>
	+ 2\sum_k\big\< \sigma_k^1 \cdot \nabla\omega,  G*\big(\bar{\sigma}_k\cdot \nabla\omega^{2}\big) \big\>                      \\
	&\quad+ \sum_k\big\< \bar{\sigma}_k\cdot \nabla\omega^{2}, G*\big(\bar{\sigma}_k\cdot \nabla\omega^{2}\big)\big\>.
\end{align*}

Note that  $\sum_k\sigma_k^1\otimes\bar{\sigma}_k
= \sum_k\sigma_k^1\otimes(\sigma_k^{1}-\sigma_k^{2})  =Q*\rho^{\delta_1}*\rho^{\delta_1}-Q*\rho^{\delta_1}*\rho^{\delta_2}=0$, where the last equality follows from the fact that the Fourier transform of $\rho^\delta$ is the indicator function of $B(0,1/\delta)$. A simple computation as \eqref{sim-conpute} yields that
\begin{align*}
	\sum_k\big\< \sigma_k^1 \cdot \nabla\omega,  G*\big(\bar{\sigma}_k\cdot \nabla\omega^{2}\big) \big\> = -\big\<\mathrm{tr}\big[\big(Q*\rho^{\delta_1}*\rho^{\delta_1}-Q*\rho^{\delta_1}*\rho^{\delta_2}\big)D^2G\big]*\omega,\omega^2\big\>=0.
\end{align*}
Then observing that $\sum_{k}\sigma_k^1\otimes\sigma_k^1=Q^{\delta_1}$ and  $~\sum_{k}\bar{\sigma}_k\otimes\bar{\sigma}_k=\sum_{k}\sigma_k^{2}\otimes\sigma_k^{2}-\sum_{k}\sigma_k^{1}\otimes\sigma_k^{1}=Q^{\delta_2}-Q^{\delta_1}$, similarly as \eqref{sim-conpute}, we have
\begin{align*}
	J_3=-\big\<\mathrm{tr}\big[Q^{\delta_1} D^2G\big]*\omega,\omega \big\>
	- \big\< \mathrm{tr}\big[\big(Q^{\delta_2}-Q^{\delta_1}\big)D^2G\big]*\omega^2,\omega^2 \big\>.
\end{align*}
Combining the above expressions of $J_2$ and $J_3$, we obtain
\begin{align*}
	J_2+J_3 &= \big\< \mathrm{tr}\big[\big(Q^{\delta_1}(0)-Q^{\delta_1}\big)D^2G\big]*\omega,\omega \big\>
	+ \big\< \mathrm{tr}\big[\big(\big(Q^{\delta_2}-Q^{\delta_1}\big)(0)-\big(Q^{\delta_2}-Q^{\delta_1}\big)\big)D^2G\big]*\omega^2,\omega^2 \big\> \\
	&\quad+ c\<G*\omega^1,\Delta\omega^2 \>+ 2\delta_1\big\< G*\omega,\Delta\omega \big\> +2\delta\big\< G*\omega,\Delta\omega^2\big\>.
\end{align*}
For the second line of the above expression, note that $\<G*\omega,\Delta\omega \>=-\|\omega\|_{L^2}\leq0$ and by the assumption $\omega_0\in L^2$, \eqref{c-delta} and \eqref{case1}, we have
\begin{align*}
	\big|c\<G*\omega^1,\Delta\omega^2 \>+2\delta\big\< G*\omega,\Delta\omega^2\big\>\big|
	&\lesssim |c_{\delta_1}-c_{\delta_2}|\, \|\omega^1\|_{L^2}\|\omega^2\|_{L^2}
	+ \delta \big(\|\omega^1\|_{L^2}\|\omega^2\|_{L^2}
	+\|\omega^2\|_{L^2}^2\big)\\
	&\leq \delta_1^{2\alpha} \big( \|\omega^1\|_{L^2}^2+\|\omega^2\|_{L^2}^2 \big) = o(1).
\end{align*}
For the first line, as in Proposition \ref{key-bdd}, we have the estimates
\begin{equation*}
	\big\< \mathrm{tr}\big[\big(Q^{\delta_1}(0)-Q^{\delta_1}\big)D^2G\big]*\omega,\omega \big\>
	\leq-c_{\alpha} \|\omega\|_{\dot{H}^{-\alpha}}^2 +C\|\omega\|_{\dot{H}^{-1}}^2 + o(1)
\end{equation*}
and
\begin{equation}\label{err-noise}
	\begin{split}
	& \Big|\tr\big[\big(\big(Q^{\delta_2}-Q^{\delta_1}\big)(0)-\big(Q^{\delta_2}-Q^{\delta_1}\big)\big)D^2G\big] \Big|  \\
	&\leq \sum_{i=1,2} \Big|\tr\big[ \big((Q^{\delta_i}(0)-Q^{\delta_i})-(Q(0)-Q)\big)D^2G\big] \Big| \\
	&\leq \sum_{i=1,2} \Big|\tr\big[ \big((Q^{\delta_i}(0)-Q^{\delta_i})-(Q(0)-Q)D^2G\big)\big]\varphi\Big| \\
	&\quad +\sum_{i=1,2}  \Big|\tr\big[ \big((Q^{\delta_i}(0)-Q^{\delta_i})-(Q(0)-Q)D^2G\big)\big](1-\varphi)\Big| \\
	&=: \sum_{i=1,2}R^i + \sum_{i=1,2}\tilde{R}^i,
	\end{split}
\end{equation}
where $\varphi$ is a bump function. From \cite[Lemma 4.4]{CogMau}, we have the pointwise estimate
  $$\big| \tr\big[ \big((Q^{\delta_i}(0)-Q^{\delta_i})-(Q(0)-Q)\big) D^2G(x)\big]\big|\lesssim \delta_i^{\alpha}|x|^{-2+\alpha},$$
hence by Young's inequality and \eqref{case1}, we have
\begin{equation}\label{tech-bdd}
	\begin{split}
		\big|\big\< R^i*\omega^2, \omega^2\big\> \big|
		&\leq \|R^i\|_{L^1} \|\omega^2\|_{L^2}^2
		\lesssim \delta_i^{\alpha}\|\omega^2\|_{L^2}^2 =o(1);
	\end{split}
\end{equation}
moreover, by \eqref{case2} and Proposition \ref{key-bdd},
\begin{equation*}
	\begin{split}
		\E\Big[\big|\big\< \tilde{R}^i*\omega^2, \omega^2\big\> \big|\Big]
		&\leq \|\tilde{R}^i\|_{L^\infty}\E\Big[ \|\omega^2\|_{L^1}^2\Big]
		\lesssim \delta_i^{\alpha} \E\Big[  \|\omega^2\|_{L^1}^2\Big]  \\
		&\lesssim_T \delta_i^{\alpha}\Big(\|\omega_0\|_{L^1}^2+\|\theta_0\|_{L^1\cap L^2}^2+\|\theta_0\|_{ L^2}^2\E\Big[\|\omega^2\|_{C_T\dot{H}^{-1}}^2\Big] \Big)\\
		&= o(1).
	\end{split}
\end{equation*}

To sum up, we have obtained the estimate
\begin{equation*}
	J_2+J_3\leq -c_{\alpha}\|\omega\|_{\dot{H}^{-\alpha}}^2
		+ C\|\omega\|_{\dot{H}^{-1}}^2 + R^{\delta_1,\delta_2} +o(1),
\end{equation*}
where $R^{\delta_1,\delta_2}$ is a $\R_+$-valued random variable whose expectation is $o(1)$ as $\delta_1, \delta_2\rightarrow 0$.

Now we turn to estimate the term $J_1$, especially the nonlinear part. Recalling $u=K*\omega=\nabla^\perp G*\omega$, we have
\begin{equation*}
	\big\< G*\omega,u\cdot \nabla \omega^2 \big\>
	=-\< u\cdot \nabla G*\omega, \omega^2  \>=0.
\end{equation*}
Fix $\bar{\alpha}\in \big(\alpha, 1-\frac{1}{p\wedge 2} \big)$, by Lemma \ref{product} with $a=2\bar{\alpha}, b=1-\bar{\alpha}$, we get
\begin{equation}\label{eq-nonlinear-part}
\aligned
	\big|\big\< G*\omega,u^1\cdot \nabla \omega \big\> \big|
	&= \big|\big\<u^1\cdot \nabla G*\omega, \omega \big\> \big|  \\
	&\leq \|u^1\cdot \nabla G*\omega\|_{\dot{H}^{\bar{\alpha}}}
	\|\omega\|_{\dot{H}^{-\bar{\alpha}}}
	\\
	&\lesssim \|u^1\|_{\dot{H}^{2\bar{\alpha}}}
	\|\nabla G*\omega\|_{\dot{H}^{1-\bar{\alpha}}}
	\|\omega\|_{\dot{H}^{-\bar{\alpha}}}  \\
	&\lesssim \|\omega^1\|_{\dot{H}^{2\bar{\alpha}-1}}
	\|\omega\|_{\dot{H}^{-\bar{\alpha}}}^2.
\endaligned
\end{equation}
By Sobolev embedding $L^{p\wedge 2}\hookrightarrow \dot{H}^{1-\frac{2}{p\wedge 2}}$,
and interpolation $\dot{H}^{-1}\cap \dot{H}^{1-\frac{2}{p\wedge 2}}\subset \dot{H}^{2\bar{\alpha}-1}$, we have
\begin{align*}
	\|\omega_t^1\|_{\dot{H}^{2\bar{\alpha}-1}}
	\leq \|\omega_t^1\|_{\dot{H}^{-1}}
	+ \|\omega_t^1\|_{L^{p\wedge 2}}
	\leq  C\Big(\|\omega^1\|_{C([0,t]; \dot{H}^{-1})}\Big), \quad \forall t\in [0,T],~\P\text{-a.s.}
\end{align*}
where the last inequality follows from the pathwise bounds \eqref{case1} and \eqref{case2}, and the symbol $C\Big(\|\omega^1\|_{C([0,t]; \dot{H}^{-1})}\Big)$ is a $\R_+$-valued random variable depending on $\|\omega^1\|_{C([0,t]; \dot{H}^{-1})}$ (linearly or nonlinearly)  and we omit the dependence on time and initial data. This notation will be frequently used  and we do not specifically mention it again. Therefore by interpolation $\dot{H}^{-1}\cap \dot{H}^{-\alpha}\subset \dot{H}^{-\bar{\alpha}}$ and $\big|\big\< G*\omega,\partial_1\theta \big\>\big|\leq \|\omega\|_{\dot{H}^{-1}}\, \|\theta\|_{L^2}\leq \|\omega\|_{\dot{H}^{-1}}^2 + \|\theta\|_{L^2}^2$, we have the  pathwise estimate
\begin{align*}
	J_1
	&\leq C\Big(\|\omega^1\|_{C([0,t]; \dot{H}^{-1})}\Big) \|\omega\|_{\dot{H}^{-\alpha}}^{2(1-\bar\alpha)/(1-\alpha)} \|\omega\|_{\dot{H}^{-1}}^{2(\bar\alpha -\alpha)/(1-\alpha)}
	+ \|\omega\|_{\dot{H}^{-1}}^2 + \|\theta\|_{L^2}^2 \\
	&\leq \frac{c_{\alpha}}{8} \|\omega\|_{\dot{H}^{-\alpha}}^2
	+ C\Big(\|\omega^1\|_{C([0,t]; \dot{H}^{-1})}\Big)
	\|\omega\|_{\dot{H}^{-1}}^2 + \|\theta\|_{L^2}^2.
\end{align*}

Summing up the above estimates, we get
\begin{equation}\label{diff-w-H^-1}
	\begin{split}
		\d \, \|\omega\|_{\dot{H}^{-1}}^2 + \frac{c_{\alpha}}{4}\|\omega\|_{\dot{H}^{-\alpha}}^2\,\d t
		&\leq  C\Big(\|\omega^1\|_{C([0,t]; \dot{H}^{-1})}\Big)
		\|\omega\|_{\dot{H}^{-1}}^2\,\d t
		+ \|\theta\|_{L^2}^2\,\d t \\
		&\quad + \sum_{k}\d (M_k^1+M_k^2)
		 +R^{\delta_1,\delta_2}\,\d t + o(1).
	\end{split}
\end{equation}

\textbf{Step 2: \boldmath{$\d \|\theta^{1}-\theta^{2}\|_{\dot{H}^{-1}}^2 $}.}
Note that the term $\|\theta\|_{L^2}^2$ appears on the right-hand side of \eqref{diff-w-H^-1}, now we need to compute $\|\theta\|_{\dot{H}^{-1}}^2$ to close the estimate. The difference $\theta$ satisfies
\begin{align*}
	\partial_t\theta + u^1\cdot \nabla\theta
	+ u\cdot \nabla\theta^2 =\Delta \theta.
\end{align*}
We have the equation
\begin{align*}
	\frac{\d}{\d t} \< \theta,G*\theta \> + 2\|\theta\|_{L^2}^2 &= -2\< u^1\cdot \nabla\theta, G*\theta\> -2\< u\cdot \nabla\theta^2, G*\theta \> \\
	&=:J_4 + J_5.
\end{align*}
Similarly, we have the estimate for $J_4$:
\begin{align*}
	J_4&\lesssim  \|u^1\|_{\dot{H}^{2\alpha}}
	\|\nabla G*\theta\|_{\dot{H}^{1-\alpha}}
	\|\theta\|_{\dot{H}^{-\alpha}}
	\leq \|\omega^1\|_{\dot{H}^{2\alpha-1}}\|\theta\|_{\dot{H}^{-\alpha}}^2 \\
	&\leq \frac{1}{8}\|\theta\|_{L^2}^2 + C\Big(\|\omega^1\|_{C([0,t]; \dot{H}^{-1})}\Big)
	\|\theta\|_{\dot{H}^{-1}}^2.
\end{align*}
As for the term $J_5$, applying Lemma \ref{product} with $a=b=1-\alpha$ and interpolation, we obtain
\begin{align*}
	J_5& \lesssim|\< u\cdot \nabla G*\theta,\theta^2 \>|
\leq  \|u\cdot \nabla G*\theta\|_{\dot{H}^{1-2\alpha}}
	\|\theta^2\|_{\dot{H}^{2\alpha-1}}
	\\
	&\lesssim \|u\|_{\dot{H}^{1-\alpha}}
	\|\nabla G*\theta\|_{\dot{H}^{1-\alpha}}
	\|\theta^2\|_{\dot{H}^{2\alpha-1}} \\
	&\lesssim \|\omega\|_{\dot{H}^{-\alpha}}
	\|\theta\|_{\dot{H}^{-\alpha}}
	\| \theta^2\|_{\dot{H}^{2\alpha-1}};
\end{align*}
by Young's inequality,
\begin{align*}
	J_5&\leq \frac{c_\alpha}{8} \|\omega\|_{\dot{H}^{-\alpha}}^2
	+ C\| \theta^2\|_{\dot{H}^{2\alpha-1}}^2 \|\theta\|_{\dot{H}^{-1}}^{2\alpha} \|\theta\|_{L^2}^{2(1-\alpha)}    \\
	&\leq \frac{c_\alpha}{8} \|\omega\|_{\dot{H}^{-\alpha}}^2 + \frac{1}{8}\|\theta\|_{L^2}^2 +  C\| \theta^2\|_{\dot{H}^{2\alpha-1}}^{2/\alpha} \|\theta\|_{\dot{H}^{-1}}^2.
\end{align*}
Recalling the uniform bound $\|\theta^2\|_{L^2}^2\leq \|\theta_0^{\delta_2}\|_{L^2}^2=\|\theta_0\|_{L^2}^2+o(1)$ and the pathwise bound \eqref{theta-delta-H^-1-bdd1}, by interpolation, it is obvious that $\|\theta^2\|_{\dot{H}^{2\alpha-1}}^{2/\alpha}\leq C\Big(\|\omega^2\|_{C([0,t]; \dot{H}^{-1})}\Big)$, thus combining the above estimates on $J_4$ and $J_5$,
we obtain
\begin{equation}\label{diff-theta-H^-1}
	\frac{\d}{\d t}  \|\theta\|_{\dot{H}^{-1}}^2 + \|\theta\|_{L^2}^2
	\leq
	\frac{c_\alpha}{8} \|\omega\|_{\dot{H}^{-\alpha}}^2
	+ C\Big(\|\omega^2\|_{C([0,t]; \dot{H}^{-1})}\Big)
	\|\theta\|_{\dot{H}^{-1}}^2,\quad \P\text{-a.s.}
\end{equation}

\textbf{Step 3: Estimate of \boldmath{$\|\omega^{1}-\omega^{2}\|_{C_T\dot{H}^{-1}}^2+\|\theta^{1}-\theta^{2}\|_{C_T\dot{H}^{-1}}^2$}.}
Putting \eqref{diff-w-H^-1}, \eqref{diff-theta-H^-1} together and writing them in integral form, we obtain $\P\text{-a.s.}, \forall\, t\in [0,T]$,
\begin{equation}\label{diff-H^-1}
	\begin{split}
		&\|\omega_t\|_{\dot{H}^{-1}}^2
		+ \|\theta_t\|_{\dot{H}^{-1}}^2
		+\int_{0}^{t}\|\omega_s\|_{\dot{H}^{-\alpha}}^2\,\d s
		+\int_{0}^{t}\|\theta_s\|_{L^2}^2  \,\d s \\
		&\lesssim \|\bar{\omega}_0\|_{\dot{H}^{-1}}^2
		+ \|\bar{\theta}_0\|_{\dot{H}^{-1}}^2
		+ \sum_{k} \big(M_k^{1,t}+M_k^{2,t} \big)
	    + TR^{\delta_1,\delta_2} + o(1)  \\
		&\quad
		+ \int_{0}^{t}C\Big(\|\omega^1\|_{C([0,s]; \dot{H}^{-1})}\Big)
		\|\omega_s\|_{\dot{H}^{-1}}^2 \,\d s
		+ \int_{0}^{t}C\Big(\|\omega^2\|_{C([0,s]; \dot{H}^{-1})}\Big)
		\|\theta_s\|_{\dot{H}^{-1}}^2 \,\d s,
	\end{split}
\end{equation}
where $(\bar{\omega}_0,\bar{\theta}_0)= \big(\omega_0^{\delta_1}- \omega_0^{\delta_2}, \theta_0^{\delta_1}- \theta_0^{\delta_2} \big)$.

Now for every $M>0$, we introduce a stopping time
\begin{equation*}
	\tau_M:=\inf\big\{t\geq0: \|\omega_t^1\|_{\dot{H}^{-1}}\vee\|\omega_t^2\|_{\dot{H}^{-1}} >M \big\}\wedge T.
\end{equation*}
The stopping time $\tau_M<\infty$ is well defined since $\omega^1,\omega^2\in C_T\dot{H}^{-1}, ~\P\text{-a.s.}$
Then we have
\begin{equation*}
	C\Big(\|\omega^1\|_{C([0,t]; \dot{H}^{-1})}\Big)
	\vee
	C\Big(\|\omega^2\|_{C([0,t]; \dot{H}^{-1})}\Big) \leq C(M)< +\infty, \quad \text{ on } \{t\leq \tau_M\}.
\end{equation*}
As a consequence, replacing $t$ with $t\wedge \tau_M$ and taking expectation in \eqref{diff-H^-1}, we arrive at
\begin{align*}
	&\E\big[\|\omega_{t\wedge \tau_M}\|_{\dot{H}^{-1}}^2\big]
	+ \E\big[\|\theta_{t\wedge \tau_M}\|_{\dot{H}^{-1}}^2\big]
	+\E\int_{0}^{t\wedge \tau_M}\|\omega_s\|_{\dot{H}^{-\alpha}}^2\,\d s \\
	&\lesssim
	\|\bar{\omega}_0\|_{\dot{H}^{-1}}^2
	+ \|\bar{\theta}_0\|_{\dot{H}^{-1}}^2
	+ C(M) \int_{0}^{t}\Big(\E\big[ \|\omega_{s\wedge \tau_M}\|_{\dot{H}^{-1}}^2 \big]
	+ \E\big[ \|\theta_{s\wedge \tau_M}\|_{\dot{H}^{-1}}^2 \big]\Big)\,\d s + o(1).
\end{align*}
Thus by Gr\"onwall's inequality, we obtain
\begin{equation}\label{suptE}
	\begin{split}
		&\sup_{t\in [0,T]} \Big(\E\big[\|\omega_{t\wedge \tau_M}\|_{\dot{H}^{-1}}^2\big]
		+ \E\big[\|\theta_{t\wedge \tau_M}\|_{\dot{H}^{-1}}^2\big]\Big)
		+ \E\int_{0}^{ \tau_M}\|\omega_s\|_{\dot{H}^{-\alpha}}^2\,\d s\\
		&\lesssim_{T,M} \|\bar{\omega}_0\|_{\dot{H}^{-1}}^2
		+ \|\bar{\theta}_0\|_{\dot{H}^{-1}}^2 + o(1).
	\end{split}
\end{equation}

To interchange the order of the supremum over time and  expectation, from \eqref{diff-H^-1}, we have
\begin{equation}\label{Esupt}
	\begin{split}
		&\E\bigg[\sup_{t\in [0,T]}\|\omega_{t\wedge \tau_M}\|_{\dot{H}^{-1}}^2\bigg]
		+ \E\bigg[\sup_{t\in [0,T]}\|\theta_{t\wedge \tau_M}\|_{\dot{H}^{-1}}^2 \bigg]
		+\E\int_{0}^{\tau_M}\|\omega_s\|_{\dot{H}^{-\alpha}}^2\,\d s
		\\
		&\lesssim \|\bar{\omega}_0\|_{\dot{H}^{-1}}^2
		+ \|\bar{\theta}_0\|_{\dot{H}^{-1}}^2
		+ \sum_{i=1,2}\E\bigg[\Big|\sup_{t\in [0,T]}\sum_{k}M_k^{i,t\wedge\tau_M}\Big|\bigg]
		\\
		&\quad
		+ C(M) \int_{0}^{T}\Big(\E\big[ \|\omega_{t\wedge \tau_M}\|_{\dot{H}^{-1}}^2 \big]
		+ \E\big[ \|\theta_{t\wedge \tau_M}\|_{\dot{H}^{-1}}^2 \big]\Big)\,\d t+ o(1).
	\end{split}
\end{equation}
By Burkholder-Davis-Gundy inequality and proceeding as the computations below \eqref{H^-1-mid}, we have for every $\epsilon>0$ small,
\begin{equation}\label{BDG1}
	\begin{split}
		\E\bigg[\Big|\sup_{t\in [0,T]}\sum_{k}M_k^{1,t\wedge\tau_M}\Big|\bigg]
		&= 2\E\bigg[\Big|\sup_{t\in [0,T]}\sum_{k}\int_{0}^{t\wedge \tau_M}\big\< G*\omega_s, \sigma_k^1 \cdot \nabla\omega_s\big\>\,\d W_s^k\Big|\bigg]   \\
		& \leq \epsilon\, \E\bigg[\sup_{t\in [0,T]} \|\omega_{t\wedge \tau_M}\|_{\dot{H}^{-1}}^2 \bigg]
		+ C_{\epsilon} \E\int_{0}^{\tau_M}\|\omega_t\|_{\dot{H}^{-\alpha}}^2 \, \d t.
	\end{split}
\end{equation}

For the term $M_k^2$, we have
\begin{align*}
	\E\bigg[\Big|\sup_{t\in [0,T]}\sum_{k}M_k^{2,t\wedge\tau_M}\Big|\bigg]
	&= 2\E\bigg[\Big|\sup_{t\in [0,T]}\sum_{k}\int_{0}^{t\wedge \tau_M}\big\< G*\omega_s, \bar{\sigma}_k \cdot \nabla\omega_s^2\big\>\,\d W_s^k\Big|\bigg]  \\
	&\lesssim \E \Bigg[\bigg(\int_{0}^{T} \sum_k\big|\big\< \omega_t^2, \bar{\sigma}_k \cdot \nabla G*\omega_t\big\>\big|^2\,\d t \bigg)^{\frac{1}{2}}\Bigg].
\end{align*}
Recall that $\sum_k\bar{\sigma}_k\otimes \bar{\sigma}_k=Q^{\delta_2}-Q^{\delta_1}$ and
\begin{equation*}
	\|Q^{\delta_2}-Q^{\delta_1}\|_{L^{\infty}}
	\leq \big\|\widehat{Q^{\delta_1}}-\widehat{Q^{\delta_2}}\big\|_{L^1}
	\lesssim \int_{\frac{1}{\delta_1}\leq |\xi|\leq \frac{1}{\delta_2} }\< \xi\>^{-2-2\alpha}\,\d \xi
	\lesssim c_2-c_1,
\end{equation*}
then a direct computation shows that
\begin{align*}
	\sum_k\big|\big\< \omega^2, \bar{\sigma}_k \cdot \nabla G*\omega\big\>\big|^2
	&= \sum_k\iint \bar{\sigma}_k(x)\cdot\nabla G*\omega(x)\omega^2(x) \bar{\sigma}_k(y)\cdot\nabla G*\omega(y)\omega^2(y)\,\d x\d y\ \\
	&=\Big\<\big(Q^{\delta_1}-Q^{\delta_2}\big)*\big((\nabla G*\omega) \omega^2\big), (\nabla G*\omega) \omega^2\Big\> \\
	&\leq \|Q^{\delta_1}-Q^{\delta_2}\|_{L^{\infty}}
	\|(\nabla G*\omega) \omega^2\|_{L^1}^2   \\
	&\leq \|Q^{\delta_1}-Q^{\delta_2}\|_{L^{\infty}} \|\omega^2\|_{L^2}^2\|\omega\|_{\dot{H}^{-1}}^2\\
	&\lesssim |c_1-c_2| \big(\|\omega_0^{\delta_2}\|_{L^2}^2+1\big) \|\omega\|_{\dot{H}^{-1}}^2.
\end{align*}
Thus by \eqref{c-delta},  Proposition \ref{key-bdd} and the fact that $(\omega_0^{\delta})_{\delta}$ is bounded in $L^2$, we get the estimate
\begin{equation}\label{BDG2}
	\E\bigg[\Big|\sup_{t\in [0,T]}\sum_{k}M_k^{2,t\wedge\tau_M}\Big|\bigg] \lesssim \delta_1^{\alpha}\, \big(\|\omega_0^{\delta_2}\|_{L^2}+1\big)
	\bigg(\int_{0}^{T}\E\Big[\|\omega_t\|_{\dot{H}^{-1}}^2\Big]\,\d t\bigg)^{\frac{1}{2}}= o(1).
\end{equation}

Substituting the bounds \eqref{BDG1} with $\epsilon$ sufficiently small and \eqref{BDG2} into \eqref{Esupt}, we obtain
\begin{align*}
	&\E\bigg[\sup_{t\in [0,T]}\|\omega_{t\wedge \tau_M}\|_{\dot{H}^{-1}}^2\bigg]
	+ \E\bigg[\sup_{t\in [0,T]}\|\theta_{t\wedge \tau_M}\|_{\dot{H}^{-1}}^2 \bigg] \\
	&\lesssim
	\|\bar{\omega}_0\|_{\dot{H}^{-1}}^2
    +\|\bar{\theta}_0\|_{\dot{H}^{-1}}^2
    + C(M)\int_{0}^{T}\Big(\E\big[ \|\omega_{t\wedge \tau_M}\|_{\dot{H}^{-1}}^2 \big]
	+ \E\big[ \|\theta_{t\wedge \tau_M}\|_{\dot{H}^{-1}}^2 \big]\Big)\,\d t
	+\E\int_{0}^{\tau_M}\|\omega_t\|_{\dot{H}^{-\alpha}}^2 \, \d t,
\end{align*}
which, combined with \eqref{suptE}, gives the bound
\begin{equation}\label{ESupt}
	\begin{split}
		\E\bigg[\sup_{t\in [0,T]}\|\omega_{t\wedge \tau_M}\|_{\dot{H}^{-1}}^2\bigg]
		+ \E\bigg[\sup_{t\in [0,T]}\|\theta_{t\wedge \tau_M}\|_{\dot{H}^{-1}}^2 \bigg]
		\lesssim_{T,M} \|\bar{\omega}_0\|_{\dot{H}^{-1}}^2
		+ \|\bar{\theta}_0\|_{\dot{H}^{-1}}^2 + o(1).
	\end{split}
\end{equation}

\textbf{Step 4: Conclusion.}
Next, we show how to prove the desired results from the estimate \eqref{ESupt}. We only prove it for $\omega=\omega^{\delta_1}-\omega^{\delta_2}$, since the case of $\theta=\theta^{\delta_1}-\theta^{\delta_2}$ is exactly the same. For every $\epsilon,\, M>0$, note that $\{\tau_M<T\}=\{\|\omega^1\|_{C_T\dot{H}^{-1}}\vee \|\omega^2\|_{C_T\dot{H}^{-1}}>M\}$, we have
\begin{align*}
	\P\Big( \|\omega\|_{C_{T}\dot{H}^{-1}}>\epsilon \Big)
	&= \P\Big( \|\omega\|_{C_{T}\dot{H}^{-1}}>\epsilon,\tau_M= T \Big)
	+ \P\Big( \|\omega\|_{C_{T}\dot{H}^{-1}}>\epsilon,\tau_M< T \Big) \\
	&\leq \P\Big(\sup_{t\in [0,T]} \|\omega_{t\wedge\tau_M}\|_{\dot{H}^{-1}}>\epsilon,\tau_M=T \Big)
	+ \P\big( \tau_M< T \big) \\
	&\leq \P\Big(\sup_{t\in [0,T]} \|\omega_{t\wedge\tau_M}\|_{\dot{H}^{-1}}>\epsilon\Big)
	+ \P\Big( \|\omega^1\|_{C_T\dot{H}^{-1}}\vee \|\omega^2\|_{C_T\dot{H}^{-1}}>M \Big).
\end{align*}
On the one hand, by \eqref{ESupt},
\begin{align*}
	\P\Big(\sup_{t\in [0,T]} \|\omega_{t\wedge\tau_M}\|_{\dot{H}^{-1}}>\epsilon\Big)
	&\leq \frac1{\epsilon^2} \E\bigg[\sup_{t\in [0,T]}\|\omega_{t\wedge \tau_M}\|_{\dot{H}^{-1}}^2\bigg]  \\
	&\lesssim \frac{C(M)}{\epsilon^2} \Big(\|\omega_0^{\delta_1}-\omega_0^{\delta_2}\|_{\dot{H}^{-1}}^2
	+ \|\theta_0^{\delta_1}-\theta_0^{\delta_2}\|_{\dot{H}^{-1}}^2 + o(1)\Big).
\end{align*}
On the other hand, from Proposition \ref{key-bdd},
\begin{align*}
	\P\Big( \|\omega^1\|_{C_T\dot{H}^{-1}}\vee \|\omega^2\|_{C_T\dot{H}^{-1}}>M \Big)
	&\leq
	\P\big( \|\omega^1\|_{C_T\dot{H}^{-1}}>M \big)
	+\P\big( \|\omega^2\|_{C_T\dot{H}^{-1}}>M \big)  \\
	&\leq \frac1{M^2} \E\Big[\|\omega^1\|_{C_T\dot{H}^{-1}}^2\Big]
	+ \frac1{M^2} \E\Big[\|\omega^2\|_{C_T\dot{H}^{-1}}^2\Big] \\
	&\leq \frac1{M^2} \sup_{\delta}\E\Big[\|\omega^{\delta}\|_{C_T\dot{H}^{-1}}^2\Big] \lesssim \frac1{M^2}.
\end{align*}
Putting them together, we obtain, for all $\epsilon,M>0$,
\begin{align*}
	\P\Big(\sup_{t\in [0,T]} \|\omega^{\delta_1}-\omega^{\delta_2}\|_{\dot{H}^{-1}}>\epsilon\Big) \leq \frac{C(M)}{\epsilon^2} \Big(\|\omega_0^{\delta_1}-\omega_0^{\delta_2}\|_{\dot{H}^{-1}}^2
	+ \|\theta_0^{\delta_1}-\theta_0^{\delta_2}\|_{\dot{H}^{-1}}^2 + o(1)\Big) + \frac{C}{M^2};
\end{align*}
note that all constants are independent of $\delta_1,\delta_2$, thus first letting $\delta_1,\delta_2\to 0$, and then $M\to\infty$ complete the proof.
\end{proof}

\begin{rmk}\label{tech-error-noise}
	The technical constraint $p\geq 2$ enables us to easily control the error \eqref{err-noise} related to different noises; indeed, it would be difficult to bound \eqref{tech-bdd} without this constraint. It seems that we should estimate \eqref{err-noise} in frequency space, that is, estimate its Fourier modes and use the $L_T^2\dot{H}^{-\alpha}$ bound to control the terms in \eqref{tech-bdd}, which is extremely sophisticated.
\end{rmk}

Proposition \ref{converge-H^-1} and the well known Borel-Cantelli Lemma (see e.g. \cite[Section 2.3]{Dur}) yield the following convergence:
\begin{cor}\label{cor1}
	There exist a subsequence $(\omega^{\delta_n},\theta^{\delta_n})_{n\in \N}$ and a limit $(\omega,\theta)$ with trajectories in $C_T\dot{H}^{-1}\times C_T\dot{H}^{-1}$  such that
	\begin{equation*}
		(\omega^{\delta_n},\theta^{\delta_n})\rightarrow (\omega,\theta) \quad\text{ in }C_T\dot{H}^{-1}\times C_T\dot{H}^{-1} \text{ as }n\rightarrow\infty, \quad\P\text{-a.s.}
	\end{equation*}
\end{cor}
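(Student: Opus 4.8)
The plan is to upgrade the convergence in probability furnished by Proposition \ref{converge-H^-1} to almost sure convergence of a suitable subsequence, exploiting the completeness of the space $C_T\dot H^{-1}\times C_T\dot H^{-1}$. This is the classical principle that a family which is Cauchy in probability with values in a complete metric space admits an almost surely convergent subsequence; the only mild bookkeeping is to extract one subsequence that simultaneously controls both components $\omega$ and $\theta$.

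First I would recast Proposition \ref{converge-H^-1} as a quantitative Cauchy criterion: for each $k\in\N$ there exists $\eta_k>0$, which I may take strictly decreasing to $0$, such that for all $\delta_1,\delta_2<\eta_k$,
\[
\P\Big(\|\omega^{\delta_1}-\omega^{\delta_2}\|_{C_T\dot H^{-1}}>2^{-k}\Big)<2^{-k}
\quad\text{and}\quad
\P\Big(\|\theta^{\delta_1}-\theta^{\delta_2}\|_{C_T\dot H^{-1}}>2^{-k}\Big)<2^{-k}.
\]
I then choose a strictly decreasing sequence $\delta_k\downarrow0$ with $\delta_k<\eta_k$; since $(\eta_k)$ is decreasing, any two indices $m>k$ satisfy $\delta_m,\delta_k<\eta_k$, so the bounds above apply in particular to the consecutive differences. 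Introducing the bad events
\[
A_k:=\Big\{\|\omega^{\delta_{k+1}}-\omega^{\delta_k}\|_{C_T\dot H^{-1}}>2^{-k}\Big\}
\cup
\Big\{\|\theta^{\delta_{k+1}}-\theta^{\delta_k}\|_{C_T\dot H^{-1}}>2^{-k}\Big\},
\]
a union bound gives $\P(A_k)<2^{1-k}$, whence $\sum_k\P(A_k)<\infty$.

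The Borel--Cantelli lemma (see \cite[Section 2.3]{Dur}) then yields $\P(\limsup_k A_k)=0$, so for $\P$-a.e.\ sample point there is an index $N$ beyond which $\|\omega^{\delta_{k+1}}-\omega^{\delta_k}\|_{C_T\dot H^{-1}}\le2^{-k}$ and $\|\theta^{\delta_{k+1}}-\theta^{\delta_k}\|_{C_T\dot H^{-1}}\le2^{-k}$ for all $k\ge N$. Summing the geometric tails shows that $(\omega^{\delta_k})_k$ and $(\theta^{\delta_k})_k$ are Cauchy in $C_T\dot H^{-1}$ along almost every trajectory; by completeness of $C_T\dot H^{-1}$ they converge $\P$-a.s.\ to limits $\omega,\theta\in C_T\dot H^{-1}$. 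Since each $\omega^{\delta_k}_t,\theta^{\delta_k}_t$ is $(\mathcal F_t)_t$-adapted and almost sure limits preserve both adaptedness (under the usual conditions) and path continuity, the pair $(\omega,\theta)$ has exactly the claimed properties.

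I do not anticipate any genuine analytic difficulty here, as all the quantitative work was already done in Proposition \ref{converge-H^-1}. The only points that deserve a word of care are the diagonal choice of $(\delta_k)$ with $\delta_k<\eta_k$ and $(\eta_k)$ decreasing, ensuring the Cauchy estimates hold for every consecutive pair at once, and the simultaneous treatment of $\omega$ and $\theta$ via the union bound defining $A_k$, which is what allows a single subsequence to work for both components.
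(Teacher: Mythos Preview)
Your argument is correct and is precisely the standard Borel--Cantelli extraction that the paper invokes in its one-line proof of Corollary \ref{cor1}; you have simply written out the details the paper leaves implicit.
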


Once we have $\P\text{-a.s.}$ convergence in $C_T\dot{H}^{-1}$, combined with the pathwise bounds \eqref{case1}--\eqref{case3}, the  $\P\text{-a.s.}$ convergence holds in some stronger topologies. Note that we do not use the condition $p\geq 2$ explicitly in the following two results, which is crucial for the proof of Theorem \ref{main-thm}, see  the next section for details.

\begin{cor}\label{cor2}
    We have $\omega\in L_T^{\infty}\dot{H}^{1-\frac{2}{p\wedge 2}}$ and for all $s\in \big[-1,1-\frac{2}{p\wedge 2}\big)$,
	\begin{equation*}
		\omega^{\delta_n}\rightarrow \omega\quad \text{in } C_T\dot{H}^{s} \text{ as }n\rightarrow\infty, \quad\P\text{-a.s.}
	\end{equation*}
\end{cor}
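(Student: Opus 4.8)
The plan is to upgrade the $C_T\dot H^{-1}$ convergence of Corollary \ref{cor1} by interpolating it against a uniform bound in the higher space $\dot H^{s_0}$, where I write $s_0:=1-\frac{2}{p\wedge 2}$ and $r:=p\wedge 2$. The uniform bound comes essentially for free from the pathwise $L^r$ estimates already established: by the Sobolev embedding $L^{r}\hookrightarrow \dot H^{s_0}$ already used above (e.g. after \eqref{eq-nonlinear-part}) together with \eqref{case1}--\eqref{case3}, one has, along the chosen subsequence,
\begin{equation*}
	\|\omega^{\delta_n}\|_{L_T^{\infty}\dot H^{s_0}} \lesssim \|\omega^{\delta_n}\|_{L_T^{\infty}L^{r}} \le C\big(T,\|\omega_0\|_{L^1\cap L^p},\|\theta_0\|_{L^1\cap L^2},\|\omega^{\delta_n}\|_{C_T\dot H^{-1}}\big),\quad \P\text{-a.s.}
\end{equation*}
Since $\omega^{\delta_n}\to\omega$ in $C_T\dot H^{-1}$ almost surely, the sequence $\big(\|\omega^{\delta_n}\|_{C_T\dot H^{-1}}\big)_n$ is bounded along almost every sample path (a convergent sequence in a normed space is bounded); hence there is a $\P$-a.s. finite random variable $K=K(\omega)$ with $\sup_n \|\omega^{\delta_n}\|_{L_T^{\infty}\dot H^{s_0}}\le K$.

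To see that the limit inherits this regularity, I would fix $t\in[0,T]$ and note that $\big(\omega^{\delta_n}(t)\big)_n$ is bounded in the Hilbert space $\dot H^{s_0}$ while converging to $\omega(t)$ in $\dot H^{-1}$, hence in $\mathcal S'$. By Banach--Alaoglu a subsequence converges weakly in $\dot H^{s_0}$, and uniqueness of distributional limits forces this weak limit to be $\omega(t)$; weak lower semicontinuity of the norm then gives $\|\omega(t)\|_{\dot H^{s_0}}\le K$. Taking the supremum over $t$ yields $\omega\in L_T^{\infty}\dot H^{s_0}$, $\P$-a.s.

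For the convergence statement, fix $s\in[-1,s_0)$ and write $s=(1-\lambda)(-1)+\lambda s_0$ with $\lambda=\frac{s+1}{s_0+1}\in[0,1)$; note $s_0>-1$, so the two endpoints are distinct. The elementary interpolation inequality $\|f\|_{\dot H^s}\le \|f\|_{\dot H^{-1}}^{1-\lambda}\|f\|_{\dot H^{s_0}}^{\lambda}$, obtained by splitting $|\xi|^{2s}=\big(|\xi|^{-2}\big)^{1-\lambda}\big(|\xi|^{2s_0}\big)^{\lambda}$ and applying H\"older on the Fourier side, applied to $\omega^{\delta_n}(t)-\omega(t)$ and followed by the supremum over $t$ gives
\begin{equation*}
	\|\omega^{\delta_n}-\omega\|_{C_T\dot H^s} \le (2K)^{\lambda}\,\|\omega^{\delta_n}-\omega\|_{C_T\dot H^{-1}}^{1-\lambda}\longrightarrow 0 \quad \text{as } n\to\infty, \quad \P\text{-a.s.},
\end{equation*}
since $1-\lambda>0$ and $\|\omega^{\delta_n}-\omega\|_{L_T^{\infty}\dot H^{s_0}}\le 2K$; the uniform limit of the continuous maps $t\mapsto\omega^{\delta_n}(t)$ is again continuous, so $\omega\in C_T\dot H^s$.

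The only genuinely delicate point is the passage from the $\delta$-dependent and random right-hand sides of \eqref{case1}--\eqref{case3} to a single, $\P$-a.s. finite bound $K$. This is precisely where almost sure convergence in $C_T\dot H^{-1}$ is essential, rather than, say, convergence in probability or in law: it makes $\big(\|\omega^{\delta_n}\|_{C_T\dot H^{-1}}\big)_n$ a pathwise-bounded sequence and thereby removes the need for any uniform-in-$\omega$ moment control of the $L^{p\wedge 2}$ norms. Everything else is soft functional analysis (Sobolev embedding, weak compactness, Fourier-side interpolation), and in particular the argument never uses $p\ge 2$, so it applies verbatim to the full range $1<p<\infty$.
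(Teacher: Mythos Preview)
Your proposal is correct and follows essentially the same route as the paper's proof: both use the almost sure $C_T\dot H^{-1}$ convergence to get a pathwise-finite bound on $\sup_n\|\omega^{\delta_n}\|_{C_T\dot H^{-1}}$, feed this into the pathwise $L^{p\wedge 2}$ estimates \eqref{case1}--\eqref{case3}, pass to the limit by weak compactness, and conclude by interpolation between $\dot H^{-1}$ and $\dot H^{s_0}$. The only cosmetic difference is that the paper invokes weak$\ast$-convergence in $L_T^\infty L^{p\wedge 2}$ in one line, whereas you spell out the Banach--Alaoglu argument pointwise in $t$; your version is slightly more explicit but not materially different.
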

\begin{proof}
	From the pathwise convergence $\omega^{\delta_n}\rightarrow \omega \text{ in } C_T\dot{H}^{-1}$, one deduces that
	\begin{equation*}
		\P\Big(\sup_{n}\|\omega^{\delta_n}\|_{C_T\dot{H}^{-1}}<\infty\Big)=1,
	\end{equation*}
which combined with the pathwise bounds \eqref{case1}--\eqref{case3}, yields
	\begin{equation*}
	    \P\Big(\sup_{n}\|\omega^{\delta_n}\|_{L_T^{\infty}L^{p\wedge 2}}<\infty\Big)=1;
	\end{equation*}
that is, $\P$-a.s., $\{\omega^{\delta_n}\}_n$ is a bounded set in $L_T^{\infty}L^{p\wedge 2}\subset L_T^{\infty}\dot{H}^{1-\frac{2}{p\wedge 2}}$. Then by property of weak$\ast$\,-convergence, the limit $\omega$ also belongs to $L_T^{\infty}L^{p\wedge 2}\subset L_T^{\infty}\dot{H}^{1-\frac{2}{p\wedge 2}} ,~\P\text{-a.s.}$ The following continuous embedding is obvious by interpolation between Sobolev spaces:
	\begin{equation*}
		L_T^{\infty}\dot{H}^{1-\frac{2}{p\wedge 2}}\cap C_T\dot{H}^{-1}\hookrightarrow C_T\dot{H}^s, \quad \forall s\in \Big[-1,1-\frac{2}{p\wedge 2}\Big).
	\end{equation*}
Hence we have $\omega^{\delta_n},\omega\in C_T\dot{H}^{s}$ for every $s\in \big[-1,1-\frac{2}{p\wedge 2}\big)$. Choosing $\lambda\in (0,1)$ such that $s=(1-\lambda)(-1)+\lambda (1-\frac{2}{p\wedge 2})$, we have
    \begin{equation*}
    	\|\omega^{\delta_n}-\omega\|_{C_T\dot{H}^{s}}\leq \|\omega^{\delta_n}-\omega\|_{C_T\dot{H}^{-1}}^{1-\lambda}
    	\|\omega^{\delta_n}-\omega\|_{L_T^\infty L^{p\wedge 2}}^{\lambda}\rightarrow 0 \quad \text{ as }n\rightarrow \infty,\quad \P\text{-a.s.} \qedhere
    \end{equation*}
\end{proof}
This corollary, combined with the theory of heat kernels associated with singular drift terms, yields convergence of $\theta^{\delta_n}$ in a much stronger topology. The proof relies on Corollary \ref{cor2} and estimates similar to \textbf{Cases 1}--\textbf{3} in Section \ref{appr}, and the details are left in Appendix B.

\begin{cor}\label{cor3}
	We have $\theta\in{C_TL^2}\cap L_T^2\dot{H}^1\cap L_T^1(\dot{W}^{1,1}\cap\dot{W}^{1,p})$ and $\P$-a.s.,
	\begin{equation*}
		\theta^{\delta_n}\rightarrow \theta \quad \text{in }{C_TL^2}\cap L_T^2\dot{H}^1\cap L_T^1(\dot{W}^{1,1}\cap\dot{W}^{1,p})\quad \text{as } n\rightarrow \infty .
	\end{equation*}
\end{cor}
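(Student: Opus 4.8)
The plan is to prove the corollary in two stages: first upgrade the $C_T\dot H^{-1}$-convergence of $(\theta^{\delta_n})$ from Corollary \ref{cor1} to strong convergence in $C_TL^2\cap L_T^2\dot H^1$ by a direct energy estimate on the difference of two approximants, and then bootstrap to the gradient spaces $L_T^1(\dot W^{1,1}\cap\dot W^{1,p})$ through the mild formulation and heat-kernel smoothing, exactly as in \textbf{Cases 1}--\textbf{3} of Section \ref{appr}. Throughout I work $\P$-a.s. and exploit that, by Corollary \ref{cor2}, $u^{\delta_n}=K*\omega^{\delta_n}\to u=K*\omega$ in $C_T\dot H^{\beta}$ for every $\beta\in\big[0,2-\tfrac{2}{p\wedge2}\big)$, together with the uniform pathwise bounds on $\|\theta^\delta\|_{C_TL^2\cap L_T^2\dot H^1}$ (Lemma \ref{wellpoesd-reg-model}) and on $\|\omega^\delta\|_{L_T^\infty L^{p\wedge2}}$ (\textbf{Cases 1}--\textbf{3}).

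For the first stage I set $\bar\theta=\theta^{\delta_m}-\theta^{\delta_n}$, $\bar u=u^{\delta_m}-u^{\delta_n}$; subtracting the two $\theta$-equations gives $\partial_t\bar\theta+u^{\delta_m}\cdot\nabla\bar\theta+\bar u\cdot\nabla\theta^{\delta_n}=\Delta\bar\theta$. Testing against $\bar\theta$, the term $\langle u^{\delta_m}\cdot\nabla\bar\theta,\bar\theta\rangle$ vanishes by $\div u^{\delta_m}=0$, and after one integration by parts the remaining term is $\langle\bar u\,\theta^{\delta_n},\nabla\bar\theta\rangle$, half of which I absorb into $\|\nabla\bar\theta\|_{L^2}^2$. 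Choosing $\beta\in(0,1)$ with $\beta<2-\tfrac2{p\wedge2}$ and estimating $\|\bar u\,\theta^{\delta_n}\|_{L^2}\le\|\bar u\|_{L^{\bar q}}\|\theta^{\delta_n}\|_{L^{2/\beta}}\lesssim\|\bar u\|_{\dot H^{\beta}}\|\theta^{\delta_n}\|_{L^2}^{\beta}\|\nabla\theta^{\delta_n}\|_{L^2}^{1-\beta}$ via the Sobolev embedding $\dot H^\beta\hookrightarrow L^{\bar q}$ and the $2$D Gagliardo--Nirenberg inequality, I obtain after integrating in time and Hölder
\[
\sup_{t\in[0,T]}\|\bar\theta_t\|_{L^2}^2+\int_0^T\|\nabla\bar\theta_s\|_{L^2}^2\,\d s
\lesssim_T\|\theta_0^{\delta_m}-\theta_0^{\delta_n}\|_{L^2}^2+\|\bar u\|_{C_T\dot H^{\beta}}^2\,\|\theta^{\delta_n}\|_{L_T^\infty L^2}^{2\beta}\Big(\int_0^T\|\nabla\theta_s^{\delta_n}\|_{L^2}^2\,\d s\Big)^{1-\beta}.
\]
The right-hand side tends to $0$ by \eqref{converge-initial-2}, Corollary \ref{cor2} and the uniform energy bounds, so $(\theta^{\delta_n})$ is $\P$-a.s. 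Cauchy in $C_TL^2\cap L_T^2\dot H^1$; testing against $C_c^\infty$ functions identifies the limit with the $C_T\dot H^{-1}$-limit $\theta$, whence $\theta\in C_TL^2\cap L_T^2\dot H^1$.

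For the second stage I write the difference in mild form, $\bar\theta_t=e^{t\Delta}(\theta_0^{\delta_m}-\theta_0^{\delta_n})-\int_0^te^{(t-s)\Delta}\big(u^{\delta_m}_s\cdot\nabla\bar\theta_s+\bar u_s\cdot\nabla\theta^{\delta_n}_s\big)\,\d s$, apply $\nabla$, and bound the two Duhamel contributions with the smoothing estimates $\|\nabla e^{\tau\Delta}g\|_{L^\rho}\lesssim\tau^{-\frac12-(\frac1l-\frac1\rho)}\|g\|_{L^l}$ already used in \textbf{Cases 2}--\textbf{3}. For $\rho=1$ one splits $\|u^{\delta_m}\cdot\nabla\bar\theta\|_{L^1}\le\|u^{\delta_m}\|_{L^2}\|\nabla\bar\theta\|_{L^2}$ and $\|\bar u\cdot\nabla\theta^{\delta_n}\|_{L^1}\le\|\bar u\|_{L^2}\|\nabla\theta^{\delta_n}\|_{L^2}$, and the resulting time-convolution integrals are controlled by $\|\nabla\bar\theta\|_{L_T^2L^2}\to0$ (Stage~1), by $\|\bar u\|_{L_T^\infty L^2}\asymp\|\omega^{\delta_m}-\omega^{\delta_n}\|_{C_T\dot H^{-1}}\to0$ (Corollary \ref{cor1}), and by the uniform bounds, giving $\int_0^T\|\nabla\bar\theta_t\|_{L^1}\,\d t\to0$. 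For $1<p<2$ the $L^p$-gradient is dominated using $\|\cdot\|_{L^p}\le\|\cdot\|_{L^1}^{1-\lambda}\|\cdot\|_{L^2}^{\lambda}\le\|\cdot\|_{L^1\cap L^2}$, so $\int_0^T\|\nabla\bar\theta_t\|_{L^p}\,\d t$ is controlled by the $L^1$-bound just obtained plus $T^{1/2}\|\nabla\bar\theta\|_{L_T^2L^2}\to0$; for $p\ge2$ I repeat the \textbf{Case 3} computation with exponents $1<l<2<q$, $\tfrac1q=\tfrac1l-\tfrac12$, $p<q$, bounding $\|\bar u\|_{L^q}\lesssim\|\bar u\|_{\dot H^{\beta}}$ with $\beta=1-\tfrac2q\in(1-\tfrac2p,1)$. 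In every case $(\nabla\theta^{\delta_n})$ is Cauchy in $L_T^1(L^1\cap L^p)$, its limit coincides with $\nabla\theta$, and the asserted convergence and membership $\theta\in L_T^1(\dot W^{1,1}\cap\dot W^{1,p})$ follow.

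The hard part will be balancing the heat-kernel time singularity against the limited spatial regularity of the drift difference $\bar u$ in the $L^p$-gradient estimate: the exponent $-\tfrac12-(\tfrac1l-\tfrac1p)$ must stay strictly above $-1$ for time-integrability, which forces the Hölder exponent $q$ to satisfy $q>p$ and hence $\bar u$ to be controlled in $\dot H^{\beta}$ with $\beta=1-\tfrac2q>1-\tfrac2p$, while Corollary \ref{cor2} only delivers $\bar u\to0$ in $\dot H^{\beta}$ for $\beta<2-\tfrac{2}{p\wedge2}$. That both requirements can be met simultaneously is exactly where the standing hypothesis on $p$ and the anomalous regularity feeding Corollary \ref{cor2} enter; keeping the two error contributions $u^{\delta_m}\cdot\nabla\bar\theta$ and $\bar u\cdot\nabla\theta^{\delta_n}$ integrable in time with only the $L_T^2\dot H^1$ and $C_T\dot H^{\beta}$ controls is the delicate bookkeeping of the argument.
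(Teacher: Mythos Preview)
Your proposal is correct and follows essentially the same two-stage strategy as the paper's proof in Appendix~B: first an $L^2$ energy estimate on the difference $\bar\theta$ (using that $\langle u^{\delta_m}\cdot\nabla\bar\theta,\bar\theta\rangle=0$ and H\"older on the cross term $\bar u\,\theta^{\delta_n}$) to obtain the Cauchy property in $C_TL^2\cap L_T^2\dot H^1$, and then the mild formulation with heat-kernel smoothing, mirroring \textbf{Cases~2}--\textbf{3}, to upgrade to $L_T^1(\dot W^{1,1}\cap\dot W^{1,p})$. The only cosmetic difference is that you phrase the control of $\bar u$ via the $\dot H^\beta$ scale while the paper works directly with the $L^q$ norm coming from Corollary~\ref{cor2}; these are equivalent through the Sobolev embedding $\dot H^\beta\hookrightarrow L^{2/(1-\beta)}$ in two dimensions.
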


\begin{rmk}
	Corollary \ref{cor3} seems a little surprising at first glance. Indeed, the space-time integrability of the drift term $u$ is subcritical under the well known  Krylov-R\"ockner conditions \cite{KR05}, hence $\theta$, as the solution of transport-diffusion equation (with random coefficients), can be represented as the ``convolution'' of initial data and fundamental solution corresponding to the transport-diffusion equation. This fundamental solution shares similar properties with the standard heat kernel, such as the two sided Gaussian estimate and upper gradient estimate, see \cite[Theorem 1.1]{Chen} and \cite[Lemma 3.9]{Zhao} for details. Thus when the difference of two drift terms are sufficiently small in the subcritical regime, it is natural to believe that the corresponding solutions are sufficiently close.
\end{rmk}

\section{Proof of Theorem \ref{main-thm}}\label{proof}

With the above preparations, we can now prove the existence and uniqueness of strong solutions (in probabilistic sense) to \eqref{Boussinesq-Ito}.

\begin{proof}[Proof of Theorom \ref{main-thm}]
\textbf{Part 1.} We shall first focus on the case $p\geq 2$.
	
	\textbf{Existence of probabilistic strong solutions.}
	Our task now is to verify \eqref{path-bdd}--\eqref{ex-path-bdd}.
	From Lemma \ref{wellpoesd-reg-model}, Proposition \ref{key-bdd} and \eqref{w-Lp-mid}--\eqref{case3}, we have obtained the following pathwise bounds for the approximate sequence:
	\begin{equation}\label{1}
		\begin{split}
			\sup_{t\in [0,T]}\|\theta_t^{\delta_n}\|_{L^2}^2
			&+ \int_{0}^{T}\|\nabla\theta_t^{\delta_n}\|_{L^{2}}^2\,\d t\leq2\|\theta_0\|_{L^2}^2 + o(1),  \\
			\int_{0}^{T} \|\nabla\theta_t^{\delta_n}\|_{L^1\cap L^p}\,\d t
			& \leq C\Big(p, T,\|\omega_0\|_{L^1\cap L^p},~\|\theta_0\|_{L^1\cap L^2},~  \|\omega^{\delta_n}\|_{C_T\dot{H}^{-1}}\Big),  \\
			\|\omega^{\delta_n} \|_{L_T^{\infty}L^{p}} &\le \|\omega_0 \|_{L^p}+\int_{0}^{T} \|\nabla\theta_s^{\delta_n} \|_{L^{p}} \,\d s + o(1),
		\end{split}
	\end{equation}
and moreover,
	\begin{equation}\label{11}
		\E\bigg[\sup_{t\in [0,T]} \|\omega_t^{\delta_n}\|_{\dot{H}^{-1}}^2 \bigg]
		+\int_{0}^{T}\E\big[\|\omega_t^{\delta_n}\|_{\dot{H}^{-\alpha}}^2 \big]\, \d t
		\lesssim_{\alpha, T} \|\omega_0\|_{\dot{H}^{-1}}^2 +  \|\theta_0\|_{L^2}^2 + o(1).
	\end{equation}
Thanks to the strong convergence obtained in Corollaries \ref{cor1}--\ref{cor3}, letting $n\rightarrow\infty$ in \eqref{1} gives us the corresponding pathwise bounds for the limit $(\omega,\theta)$. As for the energy bound \eqref{11}, we can argue as \cite[(6.5)]{CogMau}, which relies on the lower semi-continuity of $L_T^2\dot{H}^{-\alpha}$ norm in $L_T^2\dot{H}^{-1}$.
	
	It remains to check the identities \eqref{weak}. Since we have obtained stronger convergence than that in \cite[Section 6]{CogMau}, we omit the details and refer the reader to \cite[Section 6, \textbf{Step 3}]{CogMau}. \smallskip

	\textbf{Pathwise uniqueness.} Assume $(\omega^1,\theta^1),(\omega^2,\theta^2)$ are two solutions to \eqref{Boussinesq-Ito} with  the same initial data $(\bar{\omega}_0,\bar{\theta}_0)$ in $(\dot{H}^{-1}\cap L^1\cap L^p)\times (\dot{H}^{-1}\cap L^1\cap L^2)$, where $p\in (1,\infty)$. Then $\omega:=\omega^1-\omega^2,\, \theta:=\theta^1-\theta^2$ satisfy
	\begin{equation*}
		\left\{\aligned
		& \partial_t \omega + u^{1}\cdot \nabla\omega + u\cdot \nabla\omega^{2}
		+ \sum_{k}\sigma_k \cdot \nabla\omega\,\dot W^k
		=\partial_1\theta + \frac{\pi}{4\alpha}\Delta \omega,
	    \\
		& \partial_t\theta + u^{1}\cdot \nabla\theta + u\cdot \nabla\theta^{2} = \Delta \theta,
		\endaligned \right.
	\end{equation*}
where $u^i=K*\omega^i,i=1,2 \text{ and }u=K*\omega$.
Since $\theta^1,\theta^2$ both have trajectories in  $L_T^{\infty}(\dot{H}^{-1}\cap L^2) \cap L_T^2H^1$, we can directly compute $\frac{\d}{\d t}\<\theta,G*\theta \>$. Indeed, the bound $\|\omega^{i}\|_{L_T^{\infty}(L^1\cap L^p)}<\infty$, $\P\text{-a.s.}$ implies for some $r\in (2, \infty]$, $\|u^{i}\|_{L_T^{\infty}L^r}<\infty$, $\P\text{-a.s.}$ Hence by H\"older's inequality,  we have
    \begin{align*}
    	\int_{0}^{T}\|\partial_t\theta_t^{i}\|_{\dot{H}^{-1}}^2 	\,\d t
    	&\lesssim \int_{0}^{T}\|u_t^{i}\cdot\nabla\theta_t^{i}\|_{\dot{H}^{-1}}^2 	\,\d t
    	+  \int_{0}^{T}\|\Delta\theta_t^{i}\|_{\dot{H}^{-1}}^2 	\,\d t \\
    	&\lesssim  \int_{0}^{T}\|u_t^{i}\theta_t^{i}\|_{L^{2}}^2 \,\d t
    	+ \int_{0}^{T}\|\nabla\theta_t^{i}\|_{L^{2}}^2\,\d t\\
    	&\leq \int_{0}^{T}\|u_t^{i}\|_{L^{r}}^2 \|\theta_t^{i}\|_{L^{q}}^2\,\d t + \|\theta_0^{i}\|_{L^2}^2  \\
    	&\leq \|u^{i}\|_{L_T^{\infty}L^r}^2 \int_{0}^{T} \|\theta_t^{i}\|_{L^{q}}^2\,\d t + \|\theta_0^{i}\|_{L^2}^2,
    \end{align*}
where $q\in [2,\infty)$ satisfy $1/q+1/r=1/2$. Then by Sobolev embedding and interpolation, we get
    \begin{equation*}
    	\int_{0}^{T} \|\theta_t^{i}\|_{L^{q}}^2\,\d t
    	\leq \int_{0}^{T} \|\theta_t^{i}\|_{L^{2}}^2\,\d t
    	+ \int_{0}^{T}\|\nabla\theta_t^{i}\|_{L^2}^2 \,\d t
    	\leq C\big(T,\|\theta_0^{i}\|_{L^2}^2\big),
    \end{equation*}
which leads to the pathwise bound
    \begin{equation*}
    \int_{0}^{T}\|\partial_t\theta_t^{i}\|_{\dot{H}^{-1}}^2 \,\d t
    \lesssim_{T,\theta_0^i}  \|u^{i}\|_{L_T^{\infty}L^r}^2 +1 \lesssim \|\omega^{i} \|_{L_T^{\infty} (L^1 \cap L^p)}^2 +1 <\infty,\quad\P\text{-a.s.}
    \end{equation*}
From the above bound, one can easily verify that $\frac{\d}{\d t}\<\theta,G*\theta \>=2\<G*\theta,\partial_t\theta\>$, thus we have
\begin{align*}
	\frac{\d}{\d t} \< \theta,G*\theta \> + 2\|\theta\|_{L^2}^2 &= -2\< u^1\cdot \nabla\theta, G*\theta\> -2\< u\cdot \nabla\theta^2, G*\theta \>.
\end{align*}
Proceeding as \eqref{diff-theta-H^-1}, we obtain
\begin{equation}\label{estim-theta}
	\frac{\d}{\d t}  \|\theta\|_{\dot{H}^{-1}}^2 + \|\theta\|_{L^2}^2
	\leq
	\frac{c_\alpha}{8} \|\omega\|_{\dot{H}^{-\alpha}}^2
	+ C\Big(\|\omega^2\|_{C([0,t]; \dot{H}^{-1})}\Big)
	\|\theta\|_{\dot{H}^{-1}}^2,\quad \P\text{-a.s.}
\end{equation}

To compute $\|\omega\|_{\dot{H}^{-1}}^2 =4\pi^2\< \omega,G*\omega \>$, we need to introduce a family of smooth kernel $(G^{\delta})_{\delta\in (0,1)}$ with $\widehat{G^{\delta}}(\xi)=\widehat{G}(\xi)\big(e^{-4\pi\delta|\xi|^2}-e^{-4\pi|\xi|^2/\delta}\big)$ for $\xi\in \R^2$, see \cite[Section 3.3]{CogMau} for the exact definition and more properties. Now applying It\^o's formula to $\<\omega, G^{\delta}*\omega\>$, similar to \eqref{Ito-reg-w-H^-1} and \eqref{sim2}, we get
\begin{equation}\label{eq-uniqueness-omega}
\aligned
	\d \< \omega, G^{\delta}*\omega \>
	&= -2\big\< G^{\delta}*\omega,u^1\cdot \nabla \omega \big\>\,\d t
	-2\big\< G^{\delta}*\omega,u\cdot \nabla \omega^2 \big\>\,\d t
	+ 2\big\< G^{\delta}*\omega, \partial_1\theta \big\> \,\d t  \\
	&\quad -2\sum_{k}\big\< G^{\delta}*\omega, \sigma_k\cdot \nabla\omega\big\>\,\d W^k
    +\big\< \tr\big[\big(Q(0)-Q\big)D^2G^{\delta}\big]*\omega,\omega\big\>\,\d t.
\endaligned
\end{equation}
Due to the choice of $G^\delta$, it is easy to deduce that
  $$\big|\big\< G^{\delta}*\omega, \partial_1\theta \big\> \big| \lesssim \|\theta\|_{L^2}^2 + \|\omega\|_{\dot{H}^{-1}}^2; $$
next, similar to \eqref{eq-nonlinear-part}, we have
\begin{align*}
	\big|\big\< G^{\delta}*\omega,u^1\cdot \nabla \omega \big\>\big|
	&\leq \frac{c_{\alpha}}{8} \|\omega\|_{\dot{H}^{-\alpha}}^2
	+ C\Big(\|\omega^1\|_{C([0,t]; \dot{H}^{-1})}\Big)
	\|\omega\|_{\dot{H}^{-1}}^2.
\end{align*}

For the term $\big\< G^{\delta}*\omega, u\cdot\nabla\omega^2\big\>$, integrating by parts gives us $\big\< G*\omega, u\cdot\nabla\omega^2\big\>=0$, then by Lemma \ref{product} with $a=2\alpha,b=1-\alpha$ we have
\begin{align*}
	\big|\big\< G^{\delta}*\omega, u\cdot\nabla\omega^2\big\>\big|
	&=\big|\big\< u\cdot\nabla(G-G^{\delta})*\omega, \omega^2\big\>\big|    \\
	&\lesssim \|u\|_{\dot{H}^{2\alpha}} \|\nabla(G-G^{\delta})*\omega\|_{\dot{H}^{1-\alpha}}
	\|\omega^2\|_{\dot{H}^{-\alpha}}  \\
	&\leq  \|\omega\|_{\dot{H}^{2\alpha-1}}
	 \|\nabla(G-G^{\delta})*\omega\|_{\dot{H}^{1-\alpha}}
	 \|\omega^2\|_{\dot{H}^{-\alpha}}.
\end{align*}
Note that
\begin{align*}
	\int_{0}^{T}\E\Big[\|\nabla(G-G^{\delta})*\omega_t\|_{\dot{H}^{1-\alpha}}
	\|\omega_t^2\|_{\dot{H}^{-\alpha}}\Big]\,\d t
	&\leq \bigg(\int_{0}^{T}\E\Big[\|\nabla(G-G^{\delta})*\omega_t\|_{\dot{H}^{1-\alpha}}^2\Big]\,\d t\bigg)^{1/2}\\
    &\quad \times \bigg(\int_{0}^{T}\E\Big[\|\omega_t^2\|_{\dot{H}^{-\alpha}}^2\Big]\,\d t\bigg)^{1/2}.
\end{align*}
Since the $L_{\Omega}^2L_T^2\dot{H}^{-\alpha}$ norms of $\omega^1,\omega^2$ are finite and $\widehat{G^{\delta}}$ converges to $\widehat{G}$ pointwise from below, then by the dominated convergence theorem, we get
\begin{equation*}
		\int_{0}^{T}\E\Big[\|\nabla(G-G^{\delta})*\omega_t\|_{\dot{H}^{1-\alpha}}
	\|\omega_t^2\|_{\dot{H}^{-\alpha}}\Big]\,\d t=o(1),\quad \delta\rightarrow 0.
\end{equation*}
Using Sobolev embedding $L^{p\wedge 2}\hookrightarrow \dot{H}^{1-\frac{2}{p\wedge 2}}$, and interpolation $\dot{H}^{-1}\cap \dot{H}^{1-\frac{2}{p\wedge 2}}\subset \dot{H}^{2\alpha-1}$ (due to $0<\alpha < 1- \frac1{p\wedge 2}$), we have
\begin{equation*}
		\|\omega_t\|_{\dot{H}^{2\alpha-1}}
		\leq \|\omega_t \|_{\dot{H}^{-1}}
		+ \|\omega_t\|_{L^{p\wedge 2}}
		\leq  C\Big(\|\omega\|_{C([0,t]; \dot{H}^{-1})}\Big), \quad \forall\, t\in [0,T],
\end{equation*}
where the last inequality follows from the pathwise bounds \eqref{ex-path-bdd}. Thus we have obtained the estimate
\begin{align*}
	\big|\big\< G^{\delta}*\omega, u\cdot\nabla\omega^2\big\>\big|
	\leq C\Big(\|\omega\|_{C([0,t]; \dot{H}^{-1})}\Big)
    \text{Rem}_1,
\end{align*}
where $\text{Rem}_1:=\|\nabla(G-G^{\delta})*\omega\|_{\dot{H}^{1-\alpha}}
\|\omega^2\|_{\dot{H}^{-\alpha}}$ satisfies
  $$\int_{0}^{T}\E\big[\text{Rem}_1\big]\,\d t=o(1)\quad \mbox{as } \delta\rightarrow 0.$$

Next, we split $\tr\big[\big(Q(0)-Q\big)D^2G^{\delta}\big]$ as follows:
\begin{align*}
	\tr\big[\big(Q(0)-Q\big)D^2G^{\delta}\big]
	&=\tr\big[\big(Q(0)-Q\big)D^2G\big]\varphi  \\
	&\quad-\tr\big[\big(Q(0)-Q\big)D^2(G-G^{\delta})\big]\varphi\\
	&\quad+\tr\big[\big(Q(0)-Q\big)D^2G^{\delta}\big](1-\varphi)\\
	&=:A+R_2+R_3,
\end{align*}
where $\varphi$ is a bump function. From \cite[Lemma 4.3,\,4.5]{CogMau}, there exist positive constants $c_{\alpha},C$ independent of $\delta$ and $\xi$ such that
\begin{align*}
	\widehat{A}(\xi)\leq -c_{\alpha}\langle \xi\rangle^{-2\alpha}+C\langle \xi\rangle^{-2},\quad
	\widehat{R_3}(\xi)\leq C\vert \xi\vert^{-2}, \quad \forall\,\xi\in \R^2.
\end{align*}
Moreover, by \cite[Lemma 3.1]{JiaoLuo},
\begin{align*}
	|\widehat{R}_2(\xi)|\leq C\<\xi\>^{-2\alpha},\quad |\widehat{R}_2(\xi)|\rightarrow 0\text{ as }\delta\rightarrow 0,\quad \forall\, \xi\in \R^2.
\end{align*}
Hence we have
\begin{equation*}
	\big\<\tr\big[\big(Q(0)-Q\big)D^2G^{\delta}\big]*\omega,\omega\big\>\leq -c_{\alpha}\|\omega\|_{\dot{H}^{-\alpha}}^2 +C\|\omega\|_{\dot{H}^{-1}}^2 + \text{Rem}_2,
\end{equation*}
where $\text{Rem}_2=\<R_2*\omega,\omega\>$ satisfies, by dominated convergence theorem,
  $$\int_{0}^{T}\E\big[\text{Rem}_2\big]\,\d t=o(1)\quad \mbox{as } \delta\rightarrow 0.$$
Substituting the above estimates in \eqref{eq-uniqueness-omega}, we obtain
  $$\aligned
  \d \< \omega, G^{\delta}*\omega \> &\le \Big(\|\theta \|_{L^2}^2 -\frac34 c_\alpha \|\omega \|_{\dot H^{-\alpha}}^2 \Big)\,\d t + C\Big(\|\omega^1\|_{C([0,t]; \dot{H}^{-1})}\Big) \|\omega\|_{\dot{H}^{-1}}^2 \, \d t \\
  &\quad + C\Big(\|\omega\|_{C([0,t]; \dot{H}^{-1})}\Big) (\text{Rem}_1 + \text{Rem}_2)\,\d t + \d M_t.
  \endaligned $$
Combining this inequality with \eqref{estim-theta}, we arrive at
  $$\aligned
  \d\big( \< \omega, G^{\delta}*\omega \> + \|\theta \|_{\dot H^{-1}}^2 \big) &\le -\frac{c_\alpha}2 \|\omega \|_{\dot H^{-\alpha}}^2 \,\d t + C\Big(\|\omega^1\|_{C([0,t]; \dot{H}^{-1})}\Big) \big( \|\omega\|_{\dot{H}^{-1}}^2 + \|\theta\|_{\dot{H}^{-1}}^2 \big) \, \d t \\
  &\quad + C\Big(\|\omega\|_{C([0,t]; \dot{H}^{-1})}\Big) (\text{Rem}_1 + \text{Rem}_2)\,\d t + \d M_t.
  \endaligned $$

Now for every $M>0$, we introduce stopping time $\tau_M$ as before:
\begin{equation*}
	\tau_M:=\inf\Big\{t\geq0: \|\omega_t^1\|_{\dot{H}^{-1}}\vee\|\omega_t^2\|_{\dot{H}^{-1}} >M\Big\}\wedge T.
\end{equation*}
Then arguing as in \eqref{suptE}, we arrive at
\begin{align*}
	&\sup_{t\in [0,T]}\E\big[\big\<\omega_{t\wedge \tau_M}, G^{\delta}*\omega_{t\wedge \tau_M}\big\>\big]
	+ \sup_{t\in [0,T]}\E\big[\|\theta_{t\wedge \tau_M}\|_{\dot{H}^{-1}}^2\big]
	+\E\int_{0}^{t\wedge \tau_M}\|\omega_s\|_{\dot{H}^{-\alpha}}^2\,\d s \\
	&\lesssim_{T,M}
	\|\omega_0\|_{\dot{H}^{-1}}^2
	+ \|\theta_0\|_{\dot{H}^{-1}}^2 +o(1).
\end{align*}
Next, the same argument as \eqref{ESupt} leads to
\begin{equation*}
	\begin{split}
		\E\bigg[\sup_{t\in [0,T]} \big\<\omega_{t\wedge \tau_M}, G^{\delta}*\omega_{t\wedge \tau_M} \big\>\bigg]
		+ \E\bigg[\sup_{t\in [0,T]}\|\theta_{t\wedge \tau_M}\|_{\dot{H}^{-1}}^2 \bigg]
		\lesssim_{T,M} \|\omega_0\|_{\dot{H}^{-1}}^2
		+ \|\theta_0\|_{\dot{H}^{-1}}^2 + o(1).
	\end{split}
\end{equation*}
Letting $\delta\rightarrow 0$, by the monotone convergence theorem, we get
\begin{equation}\label{Esupt-u}
	\begin{split}
		\E\bigg[\sup_{t\in [0,T]}\|\omega_{t\wedge \tau_M}\|_{\dot{H}^{-1}}^2\bigg]
		+ \E\bigg[\sup_{t\in [0,T]}\|\theta_{t\wedge \tau_M}\|_{\dot{H}^{-1}}^2 \bigg]
		\lesssim_{T,M} \|\omega_0\|_{\dot{H}^{-1}}^2
		+ \|\theta_0\|_{\dot{H}^{-1}}^2.
	\end{split}
\end{equation}
Since $\omega_0=\theta_0=0$,
this implies $\omega=\theta=0$ on $\{t\leq \tau_M\}$ for every $M>0$, then letting $M\rightarrow \infty$, we obtain $\P$-a.s. $\omega^1=\omega^2,\, \theta^1= \theta^2$. The proof of the case $p\geq 2$ is complete. \smallskip

\textbf{Part 2.} Now we assume $\omega_0\in \dot{H}^{-1}\cap L^1\cap L^p$, with $p\in (1,2)$ satisfying $\alpha<1-\frac{1}{p}$. Define as in Section 2 the approximate initial data $(\omega_0^{\delta},\theta_0^{\delta})$ which are smooth with compact support; we construct approximate solutions in a new way:
\begin{equation}\label{regular-model-u}
	\left\{\aligned
	& \partial_t \omega^{\delta} + u^{\delta}\cdot \nabla\omega^{\delta} + \sum_{k}\sigma_k \cdot \nabla\omega^{\delta} \,\dot{W}^k =\partial_1\theta^{\delta} + \frac{\pi}{4\alpha}\Delta \omega^{\delta},\\
	& \partial_t\theta^{\delta} + u^{\delta}\cdot \nabla\theta^{\delta} = \Delta \theta^{\delta},
    \quad u^{\delta}= K*\omega^{\delta},\\
	&\omega^{\delta}(0,\cdot) = \omega_0^{\delta}, ~\theta^{\delta}(0,\cdot) = \theta_0^{\delta}.
	\endaligned \right.
\end{equation}

Note that the system \eqref{regular-model-u} has smooth initial data but fixed rough noise, while the previous regularized system \eqref{regular-model} has both smooth initial data and smooth noise.
From the results about $p\geq 2$, one can easily verify that \eqref{regular-model-u} admits a unique solution $(\omega^{\delta},\theta^{\delta})$ satisfying bounds \eqref{path-bdd}--\eqref{weak} for every $\delta>0$.  Note that we did not explicitly exploit the condition $p\geq 2$ in the proof of Part 1. Repeating the \textbf{uniqueness} argument, we arrive at the estimate \eqref{Esupt-u} where $(\omega^i,\theta^i)$ is replaced by $(\omega^{\delta_i},\theta^{\delta_i}),\,i=1,2$.
The point is that the error term related to different noises in \eqref{err-noise} vanishes when estimating the difference $\|\omega^{\delta_1}-\omega^{\delta_2}\|_{C_T\dot{H}^{-1}}$.
Now arguing exactly as the proof of Proposition \ref{converge-H^-1},  the family $(\omega^{\delta}, \theta^{\delta})_{\delta}$ is pre-compact in $C_T\dot{H}^{-1}\times C_T\dot{H}^{-1},$  $\P\text{-a.s.}$  Since we do not use the condition $p\geq 2$ explicitly in Corollaries \ref{cor1}--\ref{cor3} and the above proof, it is obvious that the limit is the desired unique solution to \eqref{Boussinesq-Ito} by just repeating the procedure before.  The proof is finally complete.
\end{proof}

\begin{rmk}
	If $\omega$ is a probabilistic strong solution  and the initial data $\omega_0\in \dot{H}^{-1}\cap L^1\cap L^p$ with $p\geq 2$, it is also unnecessary to introduce the smooth kernel $G^{\delta}$. Indeed, define $H=\dot{H}^{-1},V=\dot{H}^{-1}\cap L^2$ and $V^*$ is the dual of $V$ with respect to $H$, the norms of which are as follows:
	\begin{align*}
		\|f\|_V^2:=\int_{\mathbb{R}^2}(|\xi|^{-2}\vee 1)\,|\hat{f}(\xi)|^2\,\d \xi,\quad
		\|f\|_{V^*}^2:=\int_{\mathbb{R}^2}(|\xi|^{-2}\wedge |\xi|^{-4})\,|\hat{f}(\xi)|^2\, \d \xi.
	\end{align*}
Then $(V,H,V^*)$ is a Gelfand triple, see \cite[Lemma 4.2]{BGM}. Since $\omega_0\in \dot{H}^{-1}\cap L^1\cap L^p\subset \dot{H}^{-1}\cap L^2$, we have $\omega\in L^2\big(\Omega, C_T\dot{H}^{-1}\big) \cap L^{\infty}\big(\Omega\times [0,T], L^2\big)$ and, for every $t\in [0,T]$,
\begin{align*}
	\omega_t& = \omega_0
	-\int_{0}^{t}\curl\div(u_s \otimes u_s)\, \d s
	+ \int_{0}^{t} \partial_1\theta_s  \, \d s
	+\frac{\pi}{4\alpha}\int_{0}^{t} \Delta \omega_s \, \d s
	-\sum_{k}\int_{0}^{t}\mathrm{div}(\sigma_k\omega_s) \, \d W_s^k  \\
	&=:\int_{0}^{t}Y(s) \,\d s +\int_{0}^{t}Z(s)\, \d W(s),
\end{align*}
where $Y=-\curl\div(u \otimes u)+\partial_1\theta+\frac{\pi}{4\alpha} \Delta \omega,\, Z=(\sigma_k\cdot\nabla\omega)_{k\in \N}$ and $W=(W^k)_{k\in \N}$ is the cylindrical Brownian motion on Hilbert space $U=\ell^2$. It is straightforward to verify that $Y\in L^2\big(\Omega\times [0,T],V^*\big)$ and $Z\in L^2\big(\Omega\times [0,T],L_2(U,H)\big)$, where $L_2(U,H)$ is the collection of all Hilbert-Schmidt operators from $U$ to $H$. Consequently, the stochastic Lions lemma \cite[Theorem 4.2.5]{LiuRoc} allows us to directly apply It\^o's formula to $\d \|\omega\|_{\dot{H}^{-1}}^2$.
\end{rmk}

\appendix

\section{Well-posedness of the regularized model \eqref{regular-model}}

In this section, we present the main ideas of the proof of  Lemma \ref{wellpoesd-reg-model}. To simplify notation, we omit the dependence on $\delta$ of $(\omega^\delta,\theta^\delta)$ and denote it by $(\omega,\theta)$ and assume the noise is smooth. Consider the following 2D stochastic Boussinesq system with full viscosity and smooth noise:

\begin{equation}\label{regular-model-A}
	\left\{\aligned
	& \partial_t \omega + u\cdot \nabla\omega + \sum_{k}    \L_k \omega \,\dot{W}^k =\partial_1\theta + \frac{1}{2}\sum_{k}\L_k^2\omega + \Delta \omega,\\
	& \partial_t\theta + u\cdot \nabla\theta = \Delta \theta, \\
	&\omega(0,\cdot) = \omega_0, ~\theta(0,\cdot) = \theta_0,
	\endaligned \right.
\end{equation}
where $u=K*\omega$, $\omega_0,\theta_0\in \dot{H}^{-1}\cap C_c^{\infty}$, $\L_k\omega=\sigma_k\cdot\nabla\omega$. For all $x,y\in \R^2$, we assume $Q(x-y)=\sum_{k}\sigma_k(x)\otimes\sigma_k(y)$  enjoys the same properties with $Q^\delta$ defined in \eqref{Q-delta} . In particular, $Q$ is even, $Q(0)=cI_2$ for some $c>0$ and $\widehat{Q}$ is compactly supported. Note that in this section, $Q$ is not the Kraichnan covariance mentioned in the main text.

In the absence of noise, the well-posedness of the deterministic 2D Boussinesq system with full viscosity is classical, which is similar to the 2D Navier-Stokes equations; see \cite{CanDi}. Therefore, we restrict ourselves to establishing a priori estimates for the stochastic system, the remaining part could be done by the classical compactness method for SPDEs.

\subsection*{Step 1: $\dot{H}^{-1}$ and $L^2$ estimate}

The $L^2$ energy estimate is quite straightforward, since divergence-free transport noise does not change $L^2$ bound, so we have
\begin{align*}
	\frac{\d}{\d t} \|\omega\|_{L^2} \leq \|\nabla\theta\|_{L^2},\quad \frac{\d}{\d t} \|\theta\|_{L^2}^2 + 2 \|\nabla\theta \|_{L^2}^2=0,\quad \P\text{-a.s.}
\end{align*}
We have the following pathwise bound
\begin{equation}\label{A-L^2-bdd}
	\|\theta_t\|_{L^2}^2 + 2\int_{0}^{t} \|\nabla\theta_s\|_{L^2}^2 \,\d s =\|\theta_0\|_{L^2}^2,\quad \|\omega_t\|_{L^2}\leq \|\omega_0\|_{L^2} +\sqrt{t}\|\theta_0\|_{L^2},\quad \forall t\in [0,T].
\end{equation}

Let $G$ be the Green function of $-\Delta$ on $\R^2$, namely $-\Delta G=\delta_0$, hence we have
\begin{equation*}
	\|\omega\|_{\dot{H}^{-1}}^2 = \int_{\mathbb{R}^2}|\xi|^{-2}\, |\widehat{\omega}(\xi)|^2 \,\d \xi = 4\pi^2\langle \omega,G*\omega \rangle= 4\pi^2 \|u\|_{L^2}^2.
\end{equation*}
By It\^o's formula, we have
\begin{equation}
	\begin{split} \label{A-Ito-H^-1}
		\d \langle \omega,G*\omega \rangle &=
		-2\langle G*\omega,u\cdot \nabla \omega \rangle \,\d t + 2\langle G*\omega, \partial_1\theta\rangle \,\d t
		+ 2\langle G*\omega,\Delta \omega \rangle \,\d t \\
		&\quad- 2\sum_{k}\langle G*\omega, \L_k\omega  \rangle \,\d W^k  \\
		&\quad+ \sum_{k} \langle G*\omega, \L_k^2\omega\rangle \,\d t
		+ \sum_{k}\langle G*\L_k\omega, \L_k\omega\rangle \,\d t.
	\end{split}
\end{equation}

By integration by parts, we know that $\langle G*\omega,u\cdot \nabla \omega \rangle=-\langle \nabla (G*\omega)\cdot u,  \omega \rangle=0$ and $\langle G*\omega,\Delta \omega \rangle = - \|\omega\|_{L^2}^2$; we also have $|\langle G*\omega,\partial_1\theta \rangle|\leq (2\pi)^{-1}\|\omega\|_{\dot{H}^{-1}}\,\|\partial_1\theta\|_{\dot{H}^{-1}}\leq \|u\|_{L^2}\|\theta\|_{L^2}$.

To compute the last two terms in \eqref{A-Ito-H^-1}, we need the following identity similar to \eqref{identity}:
\begin{equation}\label{A-identity}
	\L_k\omega= \sigma_k\cdot \nabla \omega= \curl \big((\sigma_k\cdot \nabla)u + (D\sigma_k)^T u\big),\quad\text{where } (D\sigma_k)_{ij}^T=\partial_i\sigma_k^j.
\end{equation}
Hence by Fourier transform and Parseval's identity, we have

\begin{align*}
	\sum_{k}\langle G*\L_k\omega, \L_k\omega\rangle
	&= \sum_{k}\big\langle G*\curl \big((\sigma_k\cdot \nabla)u + (D\sigma_k)^T u\big), \curl \big((\sigma_k\cdot \nabla)u + (D\sigma_k)^T u\big) \big\rangle  \\
	&= \sum_{k} \big\< (\sigma_k\cdot \nabla)u + (D\sigma_k)^T u, (\sigma_k\cdot \nabla)u + (D\sigma_k)^T u \big\>  \\
	&= \sum_{k} \langle \L_k u, \L_k u\rangle
	+ \sum_{k} \big\langle (D\sigma_k)^T u, (D\sigma_k)^T u\big\rangle
	+2 \sum_{k} \big\langle \L_k u, (D\sigma_k)^T u  \big\rangle.
\end{align*}

Recall  $Q(0)=cI_2$, note that $\|\nabla u\|_{L^2}=\|\nabla (K*\omega)\|_{L^2}=\|\omega\|_{L^2}$ and
\begin{align*}
  \sum_{k}\langle G*\omega, \L_k^2\omega\rangle &= \big\langle G*\omega,\div (Q(0)\nabla \omega) \big\rangle=-c\|\omega\|_{L^2}^2, \\
  \sum_{k} \langle \L_k u, \L_k u\rangle &= \langle \nabla u,Q(0)\nabla u  \rangle =c\|\nabla u\|_{L^2}^2,
\end{align*}
hence we get
\begin{align*}
	&\sum_{k}\langle G*\omega, \L_k^2\omega\rangle + \sum_{k}\langle G*\L_k\omega, \L_k\omega\rangle   \\
	&= \sum_{k} \big\langle (D\sigma_k)^T u, (D\sigma_k)^T u\big\rangle
	+2 \sum_{k} \big\langle \L_k u, (D\sigma_k)^T u  \big\rangle \\
	&=\int_{\mathbb{R}^2} \sum_{k}\partial_i\sigma_k^j(x)u^j(x) \partial_i\sigma_k^l(x) u^l(x) \,\d x
	+2\int_{\mathbb{R}^2} \sum_{k}\sigma_k^j(x)\partial_ju^i(x) \partial_i\sigma_k^l(x) u^l(x) \,\d x.
\end{align*}
Lemma \ref{reg-Q} implies that $\sum_{k}\partial_i\sigma_k^j(x)\partial_i\sigma_k^l(x)=-\Delta Q^{jl}(0)$ and $\sum_{k}\sigma_k^j(x)\partial_i\sigma_k^l(x)=-\partial_i Q^{jl}(0)=0$ since $Q$ is even by assumption. Hence we obtain

\begin{equation*}
	\sum_{k}\langle G*\omega, \L_k^2\omega\rangle + \sum_{k}\langle G*\L_k\omega, \L_k\omega\rangle
	\lesssim_Q \|u\|_{L^2}^2 \lesssim \|\omega\|_{\dot{H}^{-1}}^2.
\end{equation*}
Substituting it into \eqref{A-Ito-H^-1} and taking expectation, we get
\begin{equation}\label{append-omega}
	\frac{\d}{\d t} \E\big[\|\omega\|_{\dot{H}^{-1}}^2\big] \lesssim  \E\big[\|\omega\|_{\dot{H}^{-1}}^2 \big] +  \E\big[\|\theta\|_{L^2}^2\big].
\end{equation}
Combining the $L^2$ estimate \eqref{A-L^2-bdd} and above differential inequality, we get the bound
\begin{equation}
	\sup_{t\in [0,T]}\E\big[\|\omega_t\|_{\dot{H}^{-1}}^2\big]\leq C\big(T,\|\omega_0\|_{\dot{H}^{-1}},\|\theta_0\|_{L^2} \big).
\end{equation}

Now we estimate the $C\big([0,T],\dot{H}^{-1}\big)$ norm of $\omega$: by \eqref{A-Ito-H^-1} and the estimates above \eqref{append-omega},
\begin{equation}\label{append-omega-uniform-time}
\aligned
	\E\bigg[\sup_{t\in [0,T]} \|\omega_t\|_{\dot{H}^{-1}}^2 \bigg]
	&\leq \|\omega_0\|_{\dot{H}^{-1}}^2 + C\int_{0}^{T}\E\big[\|\omega_t\|_{\dot{H}^{-1}}^2 \big] +  \E\big[\|\theta_t\|_{L^2}^2\big]\, \d t \\
	&\quad +\E\bigg[\sup_{t\in [0,T]}\bigg|\sum_{k}\int_{0}^{t}\langle G*\omega_s, \L_k\omega_s  \rangle \,\d W_s^k\bigg|\bigg].
\endaligned
\end{equation}
To bound the martingale term, by the Burkholder-Davis-Gundy inequality, we obtain
\begin{align*}
	\E\bigg[\sup_{t\in [0,T]}\bigg|\sum_{k}\int_{0}^{t}\langle G*\omega_s, \L_k\omega_s  \rangle \,\d W_s^k\bigg|\bigg]
	\leq \E\Bigg[\bigg(\int_{0}^{T}\sum_{k}|\langle G*\omega_t, \L_k\omega_t \rangle|^2 \,\d t\bigg)^\frac{1}{2}\Bigg].
\end{align*}
Similarly as the arguments below \eqref{identity}, by \eqref{A-identity} and Young's inequality for convolution,
\begin{equation*}
	\begin{split}
		\sum_{k}|\langle G*\omega, \L_k\omega \rangle|^2
		&=\sum_{k}\Big|\Big\langle G*\omega, \curl \big((\sigma_k\cdot \nabla)u + (D\sigma_k)^T u\big) \Big\rangle\Big|^2 \\
		& =\sum_{k}\big|\big\langle u, (D\sigma_k)^T u \big\rangle \big|^2 \\
		&= \big\langle u^iu^j,\partial_{il}Q^{jm}* (u^lu^m) \big\rangle \\
		&\leq \|u\otimes u\|_{L^1}^2 \|D^2Q\|_{L^\infty}\lesssim_Q \|u\|_{L^2}^4.
	\end{split}	
\end{equation*}
Hence we have
\begin{align*}
	\E\bigg[\sup_{t\in [0,T]}\bigg|\sum_{k}\int_{0}^{t}\langle G*\omega_s, \L_k\omega_s  \rangle \,\d W^k\bigg|\bigg]
	&\lesssim \E\Bigg[ \bigg(\int_{0}^{T}\|u_t\|_{L^2}^4\, \d t\bigg)^{\frac{1}{2}} \Bigg] \\
	&\leq \E\Bigg[ \sup_{t\in [0,T]}\|u_t\|_{L^2}\bigg(\int_{0}^{T}\|u_t\|_{L^2}^2\, \d t\bigg)^{\frac{1}{2}} \Bigg]  \\
	&\leq \frac{1}{2} \E\bigg[\sup_{t\in [0,T]} \|\omega_t\|_{\dot{H}^{-1}}^2 \bigg] + C\E\bigg[\int_{0}^{T} \|\omega_t\|_{\dot{H}^{-1}}^2 \, \d t\bigg],
\end{align*}
which, combined with \eqref{append-omega-uniform-time}, yields the estimate
\begin{equation}\label{A-w-H^-1-bdd}
	\E\Big[ \|\omega\|_{C ([0,T],\dot{H}^{-1} )}^2 \Big]\leq C\big(T,\|\omega_0\|_{\dot{H}^{-1}},\|\theta_0\|_{L^2}\big) .
\end{equation}

As for the $\dot{H}^{-1}$ norm of $\theta$, we use the Ladyzhenskaya's inequality (see e.g., \cite[Section 1.1]{KukShi}),
\begin{equation*}
	\begin{split}
		\frac{\d}{\d t}\langle \theta,G*\theta \rangle + 2\|\theta\|_{L^2}^2
		&= 2\langle u\cdot\nabla G*\theta,\theta\rangle  \\
		&\leq 2\|u\|_{L^2}^{1/2}\|u\|_{\dot{H}^{1}}^{1/2}\|\nabla G*\theta\|_{L^2}^{1/2}\|\nabla G*\theta\|_{\dot{H}^{1}}^{1/2}\|\theta\|_{L^2} \\
		&\lesssim \|\omega\|_{\dot{H}^{-1}}^{1/2}\|\omega\|_{L^2}^{1/2} \|\theta\|_{\dot{H}^{-1}}^{1/2}\|\theta\|_{L^2}^{3/2} \\
	    &\leq\|\theta\|_{L^2}^{6}+ C\big(\|\omega\|_{\dot{H}^{-1}}^{2} + \|\omega\|_{L^2}^{2} + \|\theta\|_{\dot{H}^{-1}}^{2}\big).
	\end{split}
\end{equation*}
Combining the $L^2$ bound of $(\omega,\theta)$ and $\dot{H}^{-1}$ bound of $\omega$, we obtain the estimate
\begin{equation}\label{A-theta-H^-1-bdd}
	\E\Big[ \|\theta\|_{C ([0,T],\dot{H}^{-1} )}^2 \Big]\leq C\big(T,\|\omega_0\|_{\dot{H}^{-1}}, \|\theta_0\|_{\dot{H}^{-1} \cap L^2} \big) .
\end{equation}
The bounds \eqref{A-L^2-bdd}, \eqref{A-w-H^-1-bdd} and \eqref{A-theta-H^-1-bdd} are the desired estimates.

\subsection*{Step 2: Higher order estimates}
Now we are going to derive upper bounds for higher-order derivatives. Applying the It\^o's formula again, we have, for all $m\in \N_+$,

\begin{equation}\label{A-Ito-High}
	\begin{split}
		\d\, \langle D^m\omega, D^m\omega\rangle
		&=-2 \big\langle D^m\omega, D^m(u\cdot \nabla \omega) \big\rangle \,\d t + 2\langle D^m\omega, D^m\partial_1\theta\rangle\,\d t + 2 \langle D^m\omega, \Delta D^m\omega \rangle \,\d t\\
		&\quad - 2\sum_{k} \langle D^m\omega,  D^m\L_k\omega\rangle \,\d W^k \\
		&\quad + \sum_{k} \langle  D^m\L_k^2\omega, D^m\omega\rangle \,\d t + \sum_{k} \langle  D^m\L_k\omega, D^m\L_k\omega\rangle \,\d t
	\end{split}
\end{equation}
and
\begin{equation*}
	\begin{split}
		\frac{\d}{\d t}\, \langle D^{m}\theta, D^{m}\theta\rangle
		&=-2 \big\langle D^{m}\theta, D^{m}(u\cdot \nabla \theta) \big\rangle  + 2 \langle D^{m}\theta, \Delta D^{m}\theta \rangle.
	\end{split}
\end{equation*}
Integrating by parts yields
\begin{equation}\label{A-laplace}
	\langle D^m\omega, \Delta D^m\omega \rangle=-\|D^{m+1}\omega\|_{L^2}^2
	,\quad \langle D^m\theta, \Delta D^m\theta \rangle=-\|D^{m+1}\theta\|_{L^2}^2.
\end{equation}
The control of nonlinear parts by the Laplacian term is classical, we proceed as in \cite[Section 2.1]{KukShi}. Since $u\cdot \nabla\omega=\curl (u\cdot \nabla u)$ and $\big\langle D^{m+1}u, u\cdot \nabla( D^{m+1}  u) \big\rangle=0$ due to $\div u=0$, we have
\begin{equation*}
	\begin{split}
		\big\langle D^m\omega, D^m(u\cdot \nabla \omega) \big\rangle
		&=\big\langle D^m\curl u, D^m\curl (u\cdot \nabla u) \big\rangle \\
		&=\big\langle D^{m+1} u, D^{m+1} (u\cdot \nabla u) \big\rangle \\
		&= \sum_{n<m+1}C_{m,n}\big\langle D^{m+1}u, D^{m+1-n} u\cdot \nabla D^n u \big\rangle.
	\end{split}
\end{equation*}
By Ladyzhenskaya's inequality, we have
\begin{align*}
	\big|\big\langle D^{m+1}u, D^{m+1-n} u\cdot \nabla D^n u \big\rangle\big|\leq C  \|u\|_{\dot{H}^{m+3/2}}\|u\|_{\dot{H}^{m+3/2-n}}\|u\|_{\dot{H}^{n+1}}
\end{align*}
Since $0\leq n<m+1$, the numbers $m+3/2, m+3/2-n, n+1$ lie between $1$ and $m+2$. Then by interpolation inequality between $\dot{H}^{1}$ and $\dot{H}^{m+2}$, we could get the bound
\begin{equation}\label{A-nonlinear-w}
	\big| \big\langle D^m\omega, D^m(u\cdot \nabla \omega) \big\rangle \big| \lesssim \|u\|_{\dot{H}^{1}}^{\frac{m+2}{m+1}} \|u\|_{\dot{H}^{m+2}}^{\frac{2m+1}{m+1}}
	\leq \frac{1}{2}\|D^{m+1}\omega\|_{L^2}^2 + C\|\omega\|_{L^2}^{2m+2}.
\end{equation}
The estimate of $\big\langle D^m\theta, D^m(u\cdot \nabla \theta) \big\rangle$ can be done in a rather similar manner, while we interpolate between $\|u\|_{\dot{H}^1}$ (resp. $\|\theta\|_{L^2}$) and $\|u\|_{\dot{H}^{m+1}}$  (resp. $\|\theta\|_{\dot{H}^{m+1}}$), finally arrive at
\begin{equation}\label{A-nonlinear-theta}
	\big|\big\langle D^m\theta, D^m(u\cdot \nabla \theta) \big\rangle \big|\leq \frac{1}{2} \|D^{m+1}\theta\|_{L^2}^2 + \|D^{m}\omega\|_{L^2}^2 +C\big(T,m,\|\omega_0\|_{L^2}, \|\omega_0\|_{L^2}\big) .
\end{equation}
Combining the estimates \eqref{A-L^2-bdd}, \eqref{A-laplace}, \eqref{A-nonlinear-w} and \eqref{A-nonlinear-theta}, we bound the nonlinear terms by the viscous and diffusion terms.

Our task now is to show that the It\^o-Stratonovich correction terms, i.e, the last two terms in \eqref{A-Ito-High} do not blow up. The key point is that higher derivatives vanish due to a commutator estimate from \cite[Section 4.4]{CFH}, see also \cite[Appendix]{LanCri}. More specifically, define two commutators as $S_k = [D^m, \L_k]$ and $T_k= [S_k, \L_k]$ for $k\in \N_+$. It is easy to check that $S_k$ and $T_k$ are both differential operators with orders not more than $m$ and the coefficients are the derivatives of $\sigma_k$'s. Though surprising, the following identity holds.
\begin{lem}\label{A-noise-high}
	For all $k,m\in \N_+$, we have
	\begin{equation*}
		\langle  D^m\L_k^2\omega, D^m\omega\rangle + \langle  D^m\L_k\omega, D^m\L_k\omega\rangle = \langle T_k\omega, D^m\omega \rangle + \langle S_k\omega,S_k\omega\rangle.
	\end{equation*}
\end{lem}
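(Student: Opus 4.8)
The plan is to prove the identity by pure algebra of differential operators, using only the skew-adjointness of $\L_k$ and the definitions $S_k=[D^m,\L_k]$ and $T_k=[S_k,\L_k]$. Since $\div\sigma_k=0$, integration by parts shows that $\L_k=\sigma_k\cdot\nabla$ is skew-adjoint on $L^2$, that is $\langle \L_k f,g\rangle=-\langle f,\L_k g\rangle$ for smooth decaying $f,g$; this is the only analytic input, and it is legitimate because $\omega\in \dot H^m$ for every $m$ and the $\sigma_k$ are smooth. By the definition of $S_k$ I can commute $D^m$ past $\L_k$ at the cost of $S_k$, namely $D^m\L_k=\L_k D^m+S_k$. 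Iterating this once yields the expansion $D^m\L_k^2=\L_k^2 D^m+\L_k S_k+S_k\L_k$, which is the cornerstone of the computation.

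Writing $v:=D^m\omega$, I would then expand the two terms on the left-hand side separately. For the It\^o correction, the expansion of $D^m\L_k^2$ gives $\langle D^m\L_k^2\omega,v\rangle=\langle \L_k^2 v,v\rangle+\langle \L_k S_k\omega,v\rangle+\langle S_k\L_k\omega,v\rangle$. For the Stratonovich correction, I write $D^m\L_k\omega=\L_k v+S_k\omega$ and square out, obtaining $\langle D^m\L_k\omega,D^m\L_k\omega\rangle=\langle \L_k v,\L_k v\rangle+2\langle \L_k v,S_k\omega\rangle+\langle S_k\omega,S_k\omega\rangle$.

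The decisive step, which I expect to be the crux, is the cancellation of the two top-order contributions. Skew-adjointness gives $\langle \L_k^2 v,v\rangle=-\langle \L_k v,\L_k v\rangle$, so the dangerous terms $\langle \L_k v,\L_k v\rangle$, which involve $m+1$ derivatives and carry no favorable sign, cancel exactly between the two corrections. What survives is $\langle \L_k S_k\omega,v\rangle+\langle S_k\L_k\omega,v\rangle+2\langle \L_k v,S_k\omega\rangle+\langle S_k\omega,S_k\omega\rangle$. Applying skew-adjointness once more in the form $\langle \L_k v,S_k\omega\rangle=-\langle \L_k S_k\omega,v\rangle$ collapses the cross terms, leaving $\langle S_k\L_k\omega,v\rangle-\langle \L_k S_k\omega,v\rangle+\langle S_k\omega,S_k\omega\rangle=\langle [S_k,\L_k]\omega,v\rangle+\langle S_k\omega,S_k\omega\rangle$, which is precisely $\langle T_k\omega,D^m\omega\rangle+\langle S_k\omega,S_k\omega\rangle$.

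The only genuine obstacle is bookkeeping: one must keep the order of the operators straight and must resist integrating $S_k$ by parts, since $S_k$ is not skew-adjoint and only the skew-adjointness of $\L_k$ is used. The essential point is that no estimate is needed at all: the apparently problematic term $\langle \L_k v,\L_k v\rangle$ disappears identically rather than being absorbed, which is exactly why the identity, though surprising, is an exact equality. Once these cancellations are checked, the identity holds term by term for each fixed $k$ and $m$.
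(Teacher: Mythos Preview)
Your proof is correct and follows essentially the same approach as the paper: both arguments rely solely on the skew-adjointness of $\L_k$ and the commutator relation $D^m\L_k=\L_k D^m+S_k$, and both conclude by recognizing $S_k\L_k-\L_k S_k=T_k$. The only cosmetic difference is organizational: you fully expand $D^m\L_k^2=\L_k^2 D^m+\L_k S_k+S_k\L_k$ and square out $D^m\L_k\omega=\L_k v+S_k\omega$ before cancelling, whereas the paper commutes one factor at a time, but the cancellations and the final identity are identical.
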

\begin{proof}
	Since $\div\sigma_k=0$, we know that $\L_k^*$, the dual of $\L_k$, is equal to $-\L_k$; hence,
	\begin{equation*}
		\begin{split}
	    	\langle  D^m\L_k^2\omega, D^m\omega\rangle
	    	 &= \langle  \overbrace{(D^m\L_k-\L_kD^m)}^{S_k}\L_k\omega, D^m\omega\rangle
	    	 +\langle \L_kD^m\L_k\omega, D^m\omega\rangle \\
	    	 &= \langle S_k\L_k\omega, D^m\omega \rangle - \langle D^m\L_k\omega, \L_kD^m\omega \rangle \\
	    	 &=\langle S_k\L_k\omega, D^m\omega \rangle - \langle D^m\L_k\omega, D^m\L_k\omega \rangle + \langle D^m\L_k\omega, S_k\omega  \rangle.
		\end{split}
	\end{equation*}
As a result,
    \begin{align*}
    	&\langle  D^m\L_k^2\omega, D^m\omega\rangle + \langle  D^m\L_k\omega, D^m\L_k\omega\rangle  \\
    	&= \langle S_k\L_k\omega, D^m\omega \rangle + \langle D^m\L_k\omega, S_k\omega  \rangle  \\
    	&= \langle S_k\L_k\omega, D^m\omega \rangle + \langle S_k\omega,S_k\omega \rangle + \langle \L_kD^m\omega, S_k\omega\rangle \\
    	&=\langle T_k\omega, D^m\omega \rangle + \langle S_k\omega,S_k\omega\rangle.\qedhere
    \end{align*}
\end{proof}
From the above lemma, and the assumption that derivatives of all orders of $\sigma_k$'s are summable, we obtain the bound
\begin{equation}\label{A-Ito-Str-bdd}
	\sum_{k} \langle  D^m\L_k^2\omega, D^m\omega\rangle + \langle  D^m\L_k\omega, D^m\L_k\omega\rangle \lesssim_{Q,m} \|D^m\omega\|_{L^2}^2 + \|\omega\|_{L^2}^2.
\end{equation}

Similarly to \textbf{Step 1}, we can bound the expectation of the martingale term in \eqref{A-Ito-High}, the details are omitted. In conclusion, combining all the estimates \eqref{A-L^2-bdd}, \eqref{A-w-H^-1-bdd}, \eqref{A-theta-H^-1-bdd} and \eqref{A-Ito-Str-bdd}, we arrive at
\begin{equation*}
	\E\Big[ \|\omega\|_{C ([0,T],\dot{H}^{-1}\cap \dot{H}^{m} )}^2 \Big]
	+ \E\Big[ \|\theta\|_{C ([0,T],\dot{H}^{-1}\cap \dot{H}^{m} )}^2 \Big]
	\leq C\big(T,m,Q, \|\omega_0\|_{\dot{H}^{-1}\cap \dot{H}^{m}},\|\theta_0\|_{\dot{H}^{-1}\cap \dot{H}^{m}} \big),
\end{equation*}
which is sufficient to implement the classical compactness methods to prove Lemma \ref{wellpoesd-reg-model}.

\begin{rmk}
	The term $\Delta \omega$ in \eqref{regular-model-A} is only used to bound the nonlinear term $u\cdot\nabla\omega$ in the estimates of higher derivatives. However, the global smooth solution to Boussinesq system without $\Delta\omega$ also exist, see \cite{Cha} for deterministic case. Since smooth transport noise does not change higher order energy estimate (Lemma \ref{A-noise-high}), we expect that the results of \cite{Cha} can be extended to stochastic case, though, as far as we know, there is no explicit statements of such a theorem in the literature. In fact, this has been achieved for the 2D stochastic Euler equation with smooth transport noise, see \cite{LanCri}. So we choose to construct smooth approximate sequence by considering the viscous Boussinesq system just for convenience.
\end{rmk}

\section{Proof of Corollary \ref{cor3}}
\begin{proof}
It suffices to prove that $\{\theta^{\delta_n}\}$ is a Cauchy sequence in $\theta\in{C_TL^2}\cap L_T^2\dot{H}^1\cap L_T^1(\dot{W}^{1,1}\cap \dot{W}^{1,p})$, since the limit in this space must coincide with $\theta$. To simplify notations, for $m,n\in \N$, let $\theta^1=\theta^{\delta_n},\theta^2=\theta^{\delta_m},u^1=u^{\delta_n},u^2=u^{\delta_m}$ and $\bar{\theta}=\theta^1-\theta^2,\bar{u}=u^1-u^2$. The difference $\bar{\theta}$ satisfies the equation
\begin{equation}\label{bar-theta}
	\partial_t\bar{\theta} + u^{1}\cdot \nabla\bar{\theta} + \bar{u}\cdot \nabla\theta^{2} = \Delta \bar{\theta}.
\end{equation}

First we consider $L^2$ estimate, by H\"older's inequality,
\begin{align*}
	\frac{1}{2}\frac{\d}{\d t}\|\bar{\theta}\|_{L^2}^2
	+\|\nabla \bar{\theta}\|_{L^2}^2
	=\<\bar{u}\cdot\nabla\bar{\theta},\theta^2\>
	\leq \|\nabla \bar{\theta}\|_{L^2}
	\|\bar{u}\|_{L^q}\|\theta^2\|_{L^r}
	\leq \frac{1}{2}\|\nabla \bar{\theta}\|_{L^2}^2
	+\frac{1}{2}\|\bar{u}\|_{L^q}^2\|\theta^2\|_{L^r}^2,
\end{align*}
where $q,r\in (2,\infty)$ with $q^{-1}+r^{-1}=2^{-1}$. Integrating  in time, we have $\P\text{-a.s.}$,
\begin{equation*}
	\|\bar{\theta}_t\|_{L^2}^2
	+ \int_{0}^{t}\|\nabla \bar{\theta}_s\|_{L^2}^2\,\d s
	\leq \|\bar{u}\|_{L_T^{\infty}L^q}^2
	\int_{0}^{t} \|\theta_s^2\|_{L^r}^2\,\d s, \quad \forall t\in [0,T].
\end{equation*}
By Sobolev embedding and interpolation, we have $\P\text{-a.s.}$,
\begin{align*}
	\int_{0}^{T}\|\theta_s^2\|_{L^r}^2\,\d s
	\leq \int_{0}^{T}\|\theta_s^2\|_{L^2}^{4/r}\|\nabla\theta_s^2\|_{L^2}^{2-4/r}\,\d s
	\leq \int_{0}^{T}\|\theta_s^2\|_{L^2}^{2}+\|\nabla\theta_s^2\|_{L^2}^{2}\,\d s
	\lesssim_T \|\theta_0^2\|_{L^2}^2.
\end{align*}
Note that if  $r^{-1}+(p\wedge 2)^{-1}<1$, or equivalently $2<q<\frac{2(p\wedge 2)}{2-p\wedge 2}$, then by Corollary \ref{cor2}, we have $\|\bar{u}\|_{L_T^\infty L^q}=\|u^{\delta_n}-u^{\delta_m}\|_{L_T^\infty L^q}\rightarrow 0$ as $m,n\rightarrow \infty,~\P\text{-a.s.}$ Thus we have obtained the convergence
\begin{equation*}
	\|\bar{\theta}\|_{C_TL^2}^2 + \|\nabla\bar{\theta}\|_{L_T^2L^2}^2\rightarrow 0,\quad \text{ as }m,n\rightarrow \infty,~\P\text{-a.s.}
\end{equation*}

It remains to control $\|\nabla\bar{\theta}\|_{L_T^1(L^1\cap L^p)}$. We also write \eqref{bar-theta} in mild form:
\begin{equation}\label{bar-theta-Lp}
	\bar{\theta}_t=e^{t\Delta}\bar{\theta}_0 -\int_{0}^{t} e^{(t-s)\Delta} \big(u_s^1\cdot\nabla\bar{\theta}_s\big)\,\d s
	-\int_{0}^{t} e^{(t-s)\Delta} \big(\bar{u}_s\cdot\nabla\theta_s^2\big)\,\d s,\quad \forall t\in [0,T].
\end{equation}

From  $\P\text{-a.s.}$ convergence $\|\bar{u}\|_{L_T^\infty L^q}+\|\nabla\bar{\theta}\|_{L_T^2L^2}^2\rightarrow 0$ as $m,n\rightarrow\infty$ for $2<q<\frac{2(p\wedge 2)}{2-p\wedge 2}$ and the convergence of initial data,
we can estimate the above three terms in \eqref{bar-theta-Lp} as \textbf{Cases 2}--\textbf{3} in Section \ref{appr} respectively, and  consequently obtain
\begin{align*}
	\int_{0}^{T} \|\nabla\bar{\theta}_t\|_{L^1\cap L^p}\,\d t
	\rightarrow 0,\quad \text{ as }m,n\rightarrow\infty,~ \P\text{-a.s.}
\end{align*}
The proof is complete now.
\end{proof}

\medskip

\noindent \textbf{Acknowledgements.}
We thank Prof. Guohuan Zhao for helpful discussions on the heat kernel with singular drifts.
The second author is grateful to the financial supports of the National Key R\&D Program of China (No. 2020YFA0712700), the National Natural Science Foundation of China (12090010, 12090014).

\addcontentsline{toc}{section}{Refences}

\end{document}